\tikzset{>=stealth',
     cvertex/.style={circle,draw=black,inner sep=1pt,outer sep=3pt},
     vertex/.style={circle,fill=black,inner sep=1pt,outer sep=3pt},
     star/.style={circle,fill=yellow,inner sep=0.75pt,outer sep=0.75pt},
     tvertex/.style={inner sep=1pt,font=\criptsize},
     gap/.style={inner sep=0.5pt,fill=white}}
\newcommand{\ZZ}{\ensuremath{\mathbb{Z}}}
\newcommand{\CC}{\ensuremath{\mathbb{C}}}
\newcommand{\NN}{\ensuremath{\mathbb{N}}}
\newcommand{\tr}{\operatorname{\mathfrak{T}}\nolimits}
\DeclareMathOperator{\Aut}{Aut}
\DeclareMathOperator{\End}{End}
\DeclareMathOperator{\GL}{GL}
\DeclareMathOperator{\Hom}{Hom}
\DeclareMathOperator{\id}{id}
\DeclareMathOperator{\Ind}{Ind}
\DeclareMathOperator{\Irrep}{Irr}
\DeclareMathOperator{\Mod}{\ensuremath{ \mathbf{Mod}}}
\DeclareMathOperator{\Res}{Res}
\DeclareMathOperator{\sgn}{sgn}
\DeclareMathOperator{\SL}{SL}
\DeclareMathOperator{\stn}{stn}
\DeclareMathOperator{\Sym}{Sym}
\DeclareMathOperator{\trv}{trv}
\DeclareMathOperator{\triv}{trv}
\newcommand{\sfG}{\ensuremath{\mathsf G}}
\newcommand{\sfH}{\ensuremath{\mathsf H}}
\newcommand{\shift}[1]{\ensuremath{\circlearrowleft#1}}   %mathfrak
\newcommand{\CM}{\operatorname{{MCM}}}
\newcommand{\Mat}{\mathrm{Mat}}
\def\xto{\xrightarrow}
\theoremstyle{theorem}
\newtheorem{Thm}{Theorem}[section]         %f"ur Satz1, 2 ,3, etc.
\newtheorem{lemma}[Thm]{Lemma}
\newtheorem{cor}[Thm]{Corollary}
\newtheorem{corollary}[Thm]{Corollary}
\newtheorem{prop}[Thm]{Proposition}
\newtheorem{proposition}[Thm]{Proposition}
\newtheorem{Qu}[Thm]{Question}
\newtheorem*{ThmIntro}{Theorem}
\theoremstyle{remark}
\newtheorem{Bem}[Thm]{Remark}
\newtheorem{ex}[Thm]{Example}%[chapter] 
\newtheorem{example}[Thm]{Example}
\theoremstyle{definition}
\newtheorem{defi}[Thm]{Definition} %neu
\newtheorem{notation}[Thm]{Notation}
\newtheorem{sit}[Thm]{}
\def\lto{{\longrightarrow}}
\newcommand{\vp}{\varphi}
\title{McKay quivers and Lusztig algebras  of some finite groups}
\author[Ragnar-Olaf Buchweitz]{Ragnar-Olaf Buchweitz$^\dagger$}
\address{Dept.\ of Computer and Math\-ematical Sciences,
University  of Tor\-onto at Scarborough, 
1265 Military Trail, 
Toronto, ON M1C 1A4,
Canada}
\author{Eleonore Faber}
\address{
School of Mathematics, University of Leeds, LS2 9JT Leeds, UK
}
\email{e.m.faber@leeds.ac.uk}
\author{Colin Ingalls}
\address{
School of Mathematics and Statistics,
Carleton University, 
Ottawa, ON K1S 5B6,
Canada}
\email{cingalls@math.carleton.ca}
\author{Matthew Lewis}
\address{
Department of Mathematics and Statistics,
University of New Brunswick, 
Fredericton, NB. E3B 5A3, 
Canada}
\email{Matthew.Lewis@unb.ca}
\thanks{The first and third authors were partially supported by an NSERC Discovery grant.  The second author gratefully acknowledges support by the European Union's Horizon 2020 research and innovation programme under the Marie Sk{\l}odowska-Curie grant agreement No 789580.}
\thanks{${}^\dagger$The first author passed away on November 11, 2017.}
\thanks{$^*$Corresponding author: e.m.faber@leeds.ac.uk} 
\thanks{Availability of data and material: computations with Macaulay2 available on request.}
\date{\today}
\subjclass[2010]{05E10 %Combinatorial aspects of representation theory
16G20 %representations of quivers
16S35 %Skew group rings
16S37 %Quadratic and Koszul algebras
20F55 %Reflection and Coxeter groups
20C30 %representations of finite symmetric groups
} 
\keywords{complex reflection groups, Young diagrams, Clifford theory, McKay quiver, skew group rings, Koszul algebras}
\begin{document}

\maketitle

\begin{abstract}
We are interested in the McKay quiver $\Gamma(\sfG)$ and skew group rings $A*\sfG$, where $\sfG$ is a finite subgroup of  $\mathrm{GL}(V)$, where $V$ is a finite dimensional vector space over a field $K$, and $A$ is a $K-\sfG$-algebra. These skew group rings appear in Auslander's version of the McKay correspondence.  

 In the first part of this paper we consider complex reflection groups $\sfG \subseteq \mathrm{GL}(V)$ and find a combinatorial method, making use of Young diagrams, to construct the McKay quivers for the groups $G(r,p,n)$. We first look at the case $G(1,1,n)$, which is isomorphic to the symmetric group $S_n$, followed by $G(r,1,n)$ for $r >1$. Then, using Clifford theory, we can determine the McKay quiver for any $G(r,p,n)$ and thus for all  finite irreducible complex reflection groups up to finitely many exceptions. 

In the second part of the paper we consider a more conceptual approach to McKay quivers of arbitrary finite groups: we define the Lusztig algebra $\widetilde A(\sfG)$ of a finite group $\sfG \subseteq \mathrm{GL}(V)$, which is Morita equivalent to the skew group ring $A*\sfG$.  This description gives us an embedding of the basic algebra Morita equivalent to $A*G$ into a matrix algebra over $A$.  
\end{abstract}

%\tableofcontents

\section{Introduction}

In 1979 John McKay noted an astonishing relationship between finite subgroups $\sfG \subseteq \SL(2,\CC)$ and Kleinian surface singularities $\CC^2/\sfG$ \cite{mckay37graphs}: via a directed graph (aka McKay quiver) constructed entirely from the irreducible representations of $\sfG$ one can recover the geometry of the minimal resolution of the quotient singularity $\CC^2/\sfG$, i.e., the dual resolution graph of $\CC^2/\sfG$. For these singularities, the dual resolution graphs are of Dynkin type $ADE$, and the corresponding McKay quivers have as their underlying graphs extended Dynkin diagrams of corresponding types. 
Since then, much work has been put into understanding and explaining this correspondence from various points of view, and in generalizing it to higher dimensions, see \cite{BuchweitzMFO} for an introduction and extensive references.  \\
Although the McKay quivers are known for small finite subgroups $\sfG \subseteq \GL(2, \CC)$ \cite[Prop.~7]{AuslanderReiten} and $\sfG \subseteq \GL(3,\CC)$  \cite{HuJingCai}, in general not much is known about the McKay quiver of any finite subgroup $\sfG \subseteq \GL(n,\CC)$ and the relation to the geometry of $\CC^n/\sfG$. 

Let $K$ be an algebraically closed field with $|G| \in K^*$.
In this paper, we consider the problem of describing the McKay quiver for a finite  reflection group $\sfG \subseteq \GL(n,K)$. Finite reflection groups have been completely classified in characteristic zero, by \cite{ShT}, also see \cite{arjeh1976finite}: a finite irreducible complex reflection group $\sfG$ is either one of $34$ exceptional groups (in which case $2 \leq n \leq 8$), or is a member of the infinite family $G(r,p,n)$. If $n=1$, then $G(m,1,1) \cong \ZZ/m\ZZ$. For $r=1$, $G(1,1,n)$ corresponds to the symmetric group $S_n$ (note that these are not irreducible, but can be seen so as operating on the invariant hyperplane ${x_1+\cdots+x_n=0}$ by permuting variables). Otherwise, to be irreducible, the parameters $r,p,n \in \NN_{>0}$ have to  satisfy $r>1$, $p|r$ and $(r,p,n) \neq (2,2,2)$, see \cite[Theorem 2.4]{arjeh1976finite}. The group $G(r,1,n)$ is isomorphic to $\mu_r \wr S_n$ and for $p >1$ the group $G(r,p,n)$ is a normal subgroup of index $p$ of $G(r,1,n)$. %(for a more detailed description see Section XXX). REFS? KANE? 

There is also a complete classification of irreducible finite reflection groups for arbitrary fields, see, for example, 
\cite{Mit1,Mit2,Mit3, Wag1, Wag2, ZSe}. These lists contain as well modular reflection groups. These examples 
and their invariant rings are further discussed in \cite{Bro} and \cite{KMa}.
Note that there are many infinite families that are not necessarily of the form $G(r,p,n)$.

Our main result regarding McKay quivers is a complete description of the McKay quiver of $\sfG=G(r,p,n)$: the vertices correspond to certain ($r$-tuples of) Young diagrams and the arrows can be interpreted as moving boxes in (the components of the $r$-tuples of) the Young diagrams: 

\begin{ThmIntro} (see Theorems \ref{McKayquiverSn}, \ref{thm:ML}, and \ref{Thm:McKayGrpn} for the detailed versions) Let $K$ be an algebraically closed field with $|G| \in K^*.$  The McKay quiver $\Gamma(\sfG)$ for $\sfG=G(r,p,n) \subseteq \GL(n,K)$ is given as follows:  
\begin{enumerate}[leftmargin=*]
\item $\sfG=G(1,1,n)$: The irreducible representations of $G(1,1,n)$ corresponding to the vertices of $\Gamma(\sfG)$ are indexed by the partitions of $n$. Let $\tau \neq \lambda$ be partitions of $n$. Then there is an arrow from $\lambda$ to $\tau$ if and only if $\lambda$ can be formed from $\tau$ by moving a single block. The number of loops on $\lambda$ is $p(\lambda)-1,$ where $p(\lambda)$ is the number of distinct parts of $\lambda$. 
\item $\sfG=G(r,1,n)$ with $r >1$: The vertices of $\Gamma(\sfG)$ correspond to $r$-tuples $\lambda=(\lambda^{(1)}, \ldots, \lambda^{(r)})$ of Young diagrams of size $0 \leq n_i \leq n$, such that $\sum_{i=1}^rn_i=n$. Then there is an arrow from a vertex $\alpha$ to  vertex $\beta$ in $\Gamma(\sfG)$ if and only if the $r-$tuple of Young diagrams $\beta$ can be obtained from $\alpha$ by deleting a cell from position $i$ in $\alpha$ and then adding a cell to position $i+1 \mod r$.
\item Let $\sfG=G(r,p,n)$ with $p >1$ and $p |r$. Then the irreducible representations of $\sfG$ correspond to tuples $([\lambda], \delta)$, where $\lambda$ is an irreducible representation of $G(r,1,n)$ and $\delta \in \mu_{u(\lambda)} \subseteq \mu_p$ (explained in Section \ref{Sub:IrrepGrpn}). Let $([\alpha],\delta)$ and $([\beta],\delta')$ be two vertices of $\Gamma(\sfG)$. Then there is an arrow with source $([\alpha],\delta)$ and target $([\beta],\delta')$ whenever $[\beta]$ can be obtained from $[\alpha]$ by moving a single cell contained in the \emph{fundamental domain} of $[\alpha]$ (see Def.~\ref{Def:FundDomain}) cyclically to the right. 
\end{enumerate}
\end{ThmIntro}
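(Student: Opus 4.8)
The plan is to treat the three cases in order, since each builds on the previous one. For part (1), the group $\sfG = G(1,1,n) \cong S_n$ acts on the $(n-1)$-dimensional reflection representation $V$, whose character is $\chi_{(n-1,1)} = \chi_{\mathrm{perm}} - \triv$. The McKay quiver records, for each ordered pair $(\lambda,\tau)$ of irreducibles (indexed by partitions of $n$), the multiplicity $\dim \Hom_{\sfG}(S^\lambda, V \otimes S^\tau)$. First I would compute this with $V$ replaced by the permutation module $K^n = \Ind_{S_{n-1}}^{S_n} \triv$: by the projection formula and Frobenius reciprocity, $\Hom_{S_n}(S^\lambda, K^n \otimes S^\tau) = \Hom_{S_{n-1}}(\Res S^\lambda, \Res S^\tau)$, and the branching rule turns each restriction into a sum over partitions obtained by removing one box. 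So the multiplicity equals the number of partitions $\mu$ of $n-1$ obtainable from \emph{both} $\lambda$ and $\tau$ by deleting a box — equivalently the number of ``common down-neighbours'' in Young's lattice. Subtracting the contribution of $\triv$ (which is $1$ when $\lambda = \tau$ and $0$ otherwise) gives the arrow count: for $\lambda \neq \tau$ this is $1$ precisely when $\lambda$ is obtained from $\tau$ by moving one box (and $0$ otherwise, since two distinct partitions can share at most one down-neighbour), and for $\lambda = \tau$ it is (number of distinct part sizes of $\lambda$) $- 1 = p(\lambda)-1$, because the partitions of $n-1$ below $\lambda$ are indexed by the removable boxes, i.e.\ by the distinct parts.

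For part (2), the group $G(r,1,n) \cong \mu_r \wr S_n$ acts naturally on $K^n$ where $\mu_r$ acts by scalars of order $r$ on each coordinate; here $V = K^n$ is already the defining representation, so no subtraction is needed. The irreducibles of a wreath product $\mu_r \wr S_n$ are, by the standard Clifford-theoretic description, indexed by $r$-tuples $\lambda = (\lambda^{(1)},\dots,\lambda^{(r)})$ of Young diagrams with $\sum |\lambda^{(i)}| = n$; write $\chi_\xi$ for the linear character of $\mu_r$ sending the generator to $\xi$. The defining module decomposes as $V = \bigoplus_{i} (\chi_{\omega^i} \otimes \text{(free transitive piece)})$ in a way that, upon tensoring with an irreducible indexed by $\lambda$ and applying the branching rule for wreath products, removes a box from the component $\lambda^{(i)}$ and adds it to $\lambda^{(i+1)}$. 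Concretely I would identify $V$ as $\Ind_{H}^{G}(W)$ for a suitable subgroup $H$ and small module $W$, push the computation through Mackey/Frobenius as in part (1), and read off that $\dim\Hom(S^\beta, V\otimes S^\alpha) = 1$ exactly when $\beta$ differs from $\alpha$ by deleting a cell in position $i$ and inserting one in position $i+1 \bmod r$; this is the content of Theorem~\ref{thm:ML}.

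For part (3), $\sfG = G(r,p,n)$ is a normal subgroup of $N := G(r,1,n)$ of index $p$ with cyclic quotient $N/\sfG \cong \mu_p$, so Clifford theory applies directly. The irreducibles of $\sfG$ are obtained by restricting those of $N$: an $N$-irreducible indexed by $[\lambda]$ (an orbit of $r$-tuples under the cyclic shift action of $N/\sfG$) restricts to $\sfG$ as a sum of $u(\lambda)$ distinct irreducibles, indexed by the characters $\delta \in \mu_{u(\lambda)} \subseteq \mu_p$ of the relevant stabiliser, giving the pairs $([\lambda],\delta)$ as stated. The defining representation $V$ of $\sfG$ is the restriction of the defining representation of $N$. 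The strategy is then: compute $\Hom_{\sfG}\!\big(S^{([\beta],\delta')}, V \otimes S^{([\alpha],\delta)}\big)$ by inducing up to $N$, using that $\Hom_{\sfG}(X, \Res Y) \cong \Hom_N(\Ind X, Y)$ together with the known decomposition of inductions/restrictions between $\sfG$ and $N$, and then invoking the already-established part (2) for the $N$-level arrow counts. The bookkeeping — tracking how the shift-orbit structure and the auxiliary characters $\delta$ interact under $\otimes V$ — is exactly what the notion of the \emph{fundamental domain} of $[\alpha]$ (Definition~\ref{Def:FundDomain}) is designed to package: an arrow $([\alpha],\delta) \to ([\beta],\delta')$ appears iff $[\beta]$ is obtained from $[\alpha]$ by cyclically moving one cell lying in the fundamental domain, with the $\delta$-compatibility forced by how the cyclic group $\mu_p$ permutes the Clifford constituents.

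The main obstacle I expect is entirely in part (3): making the Clifford-theoretic descent precise when the stabilisers $u(\lambda)$ vary from vertex to vertex, so that the ``multiply by $V$'' operation does not interact uniformly with the $\mu_p$-action on constituents. One must verify that tensoring with $V$ commutes with the shift in the right equivariant sense, that no ``extra'' arrows appear between constituents of the same $N$-orbit, and that the fundamental-domain condition correctly selects which box-moves survive the quotient — this is where the careful definitions of Section~\ref{Sub:IrrepGrpn} and the fundamental domain do the real work, and where a naive count would overcount or undercount arrows. Parts (1) and (2), by contrast, are a fairly direct application of Frobenius reciprocity and the branching rules for $S_n$ and for wreath products.
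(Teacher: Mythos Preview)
Your proposal is correct and follows essentially the same route as the paper in all three parts: Frobenius reciprocity plus branching for $S_n$, an induction--restriction trick through a well-chosen subgroup for $G(r,1,n)$, and Clifford theory relative to $G(r,1,n)\trianglerighteq G(r,p,n)$ for the general case. The two places where the paper supplies the concrete devices you leave open are: in part~(2) the ``suitable subgroup'' is $H=G(r,1,n-1)\times\mu_r$ (index exactly $n=\dim V_{\stn}$), with $W$ the trivial $G(r,1,n-1)$-module tensored with the generating character of $\mu_r$, so that $\Ind_H^G W=V_{\stn}$ on the nose; and in part~(3) the varying-stabiliser bookkeeping you correctly flag as the main obstacle is resolved by two short lemmas showing first that any arrow in $\Gamma(G(r,1,n))$ out of a vertex $\lambda$ with $u(\lambda)>1$ necessarily lands on a vertex with $u=1$, and second that $u(\lambda)$ always divides the total number of $G(r,1,n)$-arrows from $\lambda$ into $\Ind([\beta],\delta')$, so these arrows distribute one apiece among the $u(\lambda)$ constituents $([\lambda],\delta)$---which is exactly the fundamental-domain statement.
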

To show this, we use  Ariki--Koike's \cite{ariki1994hecke} description of the irreducible representations of $G(r,1,n)$, and we use Clifford Theory  (see \cite{stembridge1989eigenvalues}) to obtain the  irreducible representations of $G(r,p,n)$.

As a notable consequence, one sees that these McKay quivers may contain loops and multiple arrows (cf.~Thm.~\ref{McKayquiverSn}, Cor.~\ref{Cor:loopsmultiple}).
and are less symmetric, see Prop.~~\ref{symGroup}. %This behaviour did not occur for the case $\sfG \subseteq \SL(n,\CC), n\leq 3$. 

We want to mention here, that similar problems on combinatorics of Young diagrams and decomposition of powers of representations have been considered in \cite{GoupilChauve}, \cite{BritnellWildon}.

We are interested in complex reflection groups, since in \cite{BFI} the first three authors established a McKay correspondence for true reflection groups, i.e., groups generated by complex reflections of order $2$: the nontrivial irreducible representations of such a group $\sfG$ were shown to correspond to maximal Cohen-Macaulay modules over the discriminant of the reflection group, in particular, they correspond to the isotypical components of the hyperplane arrangement, viewed as a module over the coordinate ring of the discriminant, see \cite[Thm.~4.17]{BFI}.  Denoting $S=\Sym_{\CC}(K^n)$, this correspondence is realized by the isomorphism of a quotient of the skew group ring $S*\sfG$ with an endomorphism ring over the coordinate ring of the discriminant $S^{\sfG}/(\Delta)$. See also \cite{BFI-ICRA} for a more leisurely introduction. \\
This result is of the same flavor as  Auslander's  algebraic McKay correspondence \cite{Auslander86}: he showed that for a small subgroup $\sfG \subseteq \GL(n, K)$ the skew group ring $S*\sfG$ is isomorphic to $\End_R(S)$, where $R=S^\sfG$ is the invariant ring. For $n=2$, Auslander's result yields a correspondence between the isomorphism classes of indecomposable maximal Cohen--Macaulay $R$-modules and irreducible representations of $\sfG$. In particular, the so-called Gabriel quiver (the quiver of the algebra $S*\sfG$, cf.~\cite[Ch.~5, \S 3]{LeuschkeWiegand}) corresponds to the McKay quiver of $\sfG$ as well as to the Auslander--Reiten quiver of the category of maximal Cohen--Macaulay modules of $S^{\sfG}$ for these groups.

This leads to the second part of the paper: the motivation was to better understand the skew group rings $S*\sfG$. In the classical case, when $\sfG \subseteq \SL(2,K)$, then $S*\sfG$ is Morita equivalent to the preprojective algebra of an extended Dynkin diagram, see \cite{ReitenVdB}. In the literature, it has been studied when $S*\sfG$ is Morita equivalent to a preprojective algebra, see e.g.~\cite{Ringel, CB1999} and more recently, when it is Morita equivalent to a higher preprojective algebra (introduced in \cite{IyamaHigher}): see \cite{AmiotIyamaReiten, Thibault}. Moreover, for $S * \sfG$  one can construct a Morita equivalent path algebra with relations via super potentials, see \cite[Thm.~3.2]{BocklandtSchedlerWemyss} and also \cite{Karmazyn}, and compute the relations in this basic algebra. 

From a geometric point of view it is interesting to study $S*\sfG$ and the McKay quiver of $\sfG$ as well.
Since the category of $S*G$-modules is equivalent to the category of $G$-equivariant $S$-modules, smooth Deligne--Mumford stacky surfaces with smooth complete local coarse moduli spaces and trivial generic stabilizers are locally described by faithful linear actions by pseudo-reflection groups.  The vertices of the McKay quiver describe the local isomorphism classes of irreducible equivariant vector bundles, i.e., vector bundles on the stack, and the arrows give generators of the module of homomorphisms between these vector bundles.

In this paper we introduce the \emph{Lusztig algebra}  $\widetilde A(G)$ for any finite subgroup $\sfG \subseteq \GL(n,K)$ and a $\sfG-K$-algebra $A$, see Def.~\ref{Def:LusztigAlg}. The Lusztig algebra $\widetilde A(G)$ is Morita equivalent to $A*\sfG$ (Cor.~\ref{Cor:LusztigMoritaequiv}) and  we obtain an embedding of $\widetilde A(G)$  into a matrix algebra over $A$. Thus we get a more conceptual way to study skew group rings $A*\sfG$ and their Morita equivalent basic algebras.  
In particular, if we assume that $A$ is Koszul, one can easily read off the McKay quiver of $\sfG$ and also the relations for the basic Morita equivalent path algebra, see Theorem \ref{Thm:LusztigAlgKoszul}. 
This gives a method to find the quiver and relations for $\widetilde A(\sfG)$. In the case where $A=S$, there is a different method from ours (using super potentials) to obtain the quiver and relations for the Morita equivalent path algebra of $S*G$ in \cite{BocklandtSchedlerWemyss}.

Furthermore, we compute examples of  Lusztig algebras $\widetilde A(\sfG)$ for Koszul algebras $A$ and finite groups $\sfG\subseteq \GL(n,K)$: we compute the Lusztig algebra for $A=S$ and its Koszul dual $A^{!}$ for the group $\sfG=D_4 \subseteq \GL(2,K)$ viewed as a reflection group in Example \ref{Ex:D4}; for $A=S=\Sym_{K}(V)$ and $\sfG$ an abelian group we recover the description of $S*\sfG$ of \cite{Crawetc}, and also of \cite[Cor.~4.1]{BocklandtSchedlerWemyss} in Example \ref{Ex:abelian}; for $\sfG=S_3$ acting on a vector space of dimension $2$ we give a general description of $\widetilde A(G)$ as a matrix algebra over $A$ (Thm.~\ref{Thm:LuzstigAlgS3}), and then calculate the relations in $\widetilde A (S_3)$ for all algebras $A$ that are free algebras modulo a $\sfG$-stable quadratic ideal. In the last example we compute the quiver and relations for $\widetilde S(\sfG)$, where $\sfG=G(1,1,4) \cong S_4$ with the help of Macaulay2, see Example \ref{Ex:S4}.

Finally a comment on how this paper came into being: 
In the first preprint version on the arXiv of \cite{BFI} we had already determined the McKay quivers of $G(1,1,n)$, and Section \ref{Sec:Sn} of the current paper appeared originally there. However, it made sense to combine it with the results of M.~Lewis' thesis about the McKay quivers of the general $G(r,p,n)$ \cite{LewisThesis}, that now make up Sections \ref{Sec:Gm1n} and \ref{Sec:Grpn} of the current paper. 
The second part of this paper about Lusztig algebras is an attempt to find a conceptual way to express the basic versions of skew group rings $S*\sfG$, and was suggested by the first author (first discussed in \cite{RagnarKLtalk}). It would be interesting to explore these algebras and their role in the McKay correspondence further.

\subsection{Plan of the paper} After discussion of some preliminaries in Section \ref{Sec:prelim} we compute the McKay quivers for the symmetric groups $G(1,1,n)$ in terms of Young diagrams in Section \ref{Sec:Sn}. In Section \ref{Sub:Gr1nIrreps} first the description of the irreducible representations of $G(r,1,n)$ for $r >1$ is recalled, as well as Ariki--Koike's branching rule. Then we determine restriction and induction of $G(r,1,n)$ and $G(r,1,n-1)\times \mu_r$ (see Prop.~\ref{Prop:IndRes}), which leads to the main result of this section, the description of the McKay quiver of $G(r,1,n)$ in Theorem \ref{thm:ML}. In Section \ref{Sub:IrrepGrpn} the irreducible representations of $G(r,p,n)$ for $p >1$ are described and Stembridge's induction and restriction rules from $G(r,1,n)$ to $G(r,p,n)$ are recalled. The main result of this Section is the description of the McKay quiver of $G(r,p,n)$ (Theorem \ref{Thm:McKayGrpn}), followed by several examples, in particular the McKay quivers for the Coxeter groups $I_2(p)=G(p,p,2)$ are determined (see Example \ref{Ex:Coxeter2}). 
In the second part, Section \ref{Sec:Lusztig}, we introduce Lusztig algebras: we define $\widetilde{A}(\sfG,W)$ for any $\sfG$-representation $W$, and show that in case $W$ is a representation generator, the Lusztig algebra $\widetilde{A}(\sfG)$ of $\sfG$ on $A$ is Morita equivalent to the skew group ring $A*\sfG$ (Cor.~\ref{Cor:LusztigMoritaequiv}).  In  \ref{Sub:QuiverRelationsKoszul} we discuss how to obtain the quiver and relations of $\widetilde{A}(\sfG)$ in case $A$ is Koszul. The paper ends with several examples in Section \ref{Sub:Examples}: we compute the Lusztig algebras for the group $\sfG=D_4$ viewed as a reflection group in $\GL(2,K)$, and for abelian subgroups in $\GL(n,K)$. In Example \ref{Ex:S3} the reflection group $S_3$ is treated in detail. 
Finally, we compute the relations for $\sfG=S_4$ and $A=K[x_1,x_2,x_3]$.

\subsection{Acknowledgements} The first three authors want to thank the Mathematisches For\-schungszentrum Oberwolfach for the perfect working conditions and inspiring atmosphere. Most of the work for this paper was done in frame of the Leibniz fellowship programme and the Research in Pairs programme. The second author wants to thank Colin Ingalls and Carleton University for their hospitality. We also want to thank the anonymous referee for helpful comments. Last but not least, we thank Ruth Buchweitz for her hospitality and support.
%%%%%%%
\section{Preliminaries} \label{Sec:prelim}

\subsection{Groups and quivers}
We fix a finite group $\sfG$ and an algebraically closed field $K$ with $|\sfG|\in K^{*}$. %We assume that the $|G|$ is invertible in $G$.
With the field understood throughout, unadorned tensor products are taken over $K$.
For a vector space $V$ over $K$, denote $V^*$ its $K$--dual.

Given our assumptions, the group algebra $K\sfG$ is semi--simple,
$K\sfG\cong \prod_{i=1}^{r}\End_{K}(V_{i})$,
where $V_{i}$ runs through representatives of the isomorphism classes of irreducible
(or indecomposable) $\sfG$--representations, equivalently, $K\sfG$--modules.  We will also sometimes denote a representation (i.e. a module $W$ over the group algebra $K\sfG$) as a group homomorphism $\rho: \sfG \xrightarrow{} \GL(W)$.

From the group $\sfG$, one constructs the \emph{McKay graph} or \emph{McKay quiver} $\Gamma(\sfG,R)$ (also called \emph{representation graph}, see \cite{FordMcKay}) as follows: let $R$ be any representation of $\sfG$ and let $\Irrep(\sfG)=\{V_1, \ldots, V_r \}$ be the set of irreducible representations of $\sfG$. The nodes of $\Gamma(\sfG,R)$ are the irreducible representations $V_i$ of $\sfG$ and there are $m_{ij}$ arrows from $V_i$ to $V_j$ if $V_j$ appears with multiplicity $m_{ij}$ in $V_i \otimes R$. In other words: $m_{ij}=\dim_K(\Hom_{K\sfG}(R \otimes V_i,V_j))$.

Note that $\Gamma(\sfG,R)$ is a directed graph that may have loops and multiple edges. Most of the time $R$ will be the \emph{defining} or \emph{standard representation} $V=V_{\stn}$ of $\sfG$: with this we mean the the natural embedding $\sfG \hookrightarrow \GL(V)$, where $V$ is an $n$-dimensional $K$-vector space. In this case we will thus only write $\Gamma(\sfG)$ for the McKay quiver. 

\begin{Bem}
For $R$ one may choose any faithful $\sfG$-representation such that $\Gamma(\sfG,R)$ is a connected graph. As Ford and McKay \cite{FordMcKay} remark, $\Gamma(\sfG,R)$ is undirected (with possible loops) if $R$ is a self-dual representation. See Section \ref{Sec:Sn} for the example of $\sfG=S_n$, for which $\Gamma(\sfG)$ is undirected and has loops for $n \geq 3$.
\end{Bem}

\begin{Bem} 
Let us note that the McKay quiver $\Gamma(\sfG)$ is sometimes defined to be the opposite quiver $\Gamma(\sfG)^{op}$, i.e., there are $m_{ij}$ arrows from $V_i$ to $V_j$ if $V_i$ appears with multiplicity $m_{ij}$ in $V_j \otimes V$. This is in particular in the literature on representation theory, see e.g.~\cite{Auslander86, AuslanderReiten, Yoshino, LeuschkeWiegand, BocklandtSchedlerWemyss}, because $\Gamma(\sfG)^{op}$ for $\sfG \subseteq \mathrm{GL}(2,K)$ then coincides with the Auslander--Reiten quiver of $\sfG$. Our convention is used in the geometric context, see \cite{FordMcKay, GonzalesSprinbergVerdier, BFI}.
\end{Bem}

\subsection{Characters and Frobenius reciprocity} 
The notation is mostly from \cite{JamesLiebeck}, but also see \cite{Serre, FultonHarris} for more background. For any representation $W$ of $\sfG$ we denote its character $\chi_W$, and the characters associated to the irreducible representations $V_i$ by $\chi_{V_i}$. Then any character $\chi$ can be written $\chi=\sum_{i=1}^r n_i \chi_{V_i}$ for some $n_i  \geq 0$. The $\chi_{V_i}$ with $n_i >0$ are called the constituents of $\chi$. 
We further denote the inner product of two characters $\chi,\varphi$ by
$$\langle\chi, \varphi\rangle=\frac{1}{|\sfG|}\sum_{i=1}^r|g_i^\sfG|\chi(g_i)\overline{\varphi(g_i)} \ , $$
where $g_1, \ldots, g_r$ are representatives of the conjugacy classes $g_i^\sfG$ of $\sfG$ (see \cite[Chapter 14]{JamesLiebeck}). We will need that for any character $\chi$ we have
$$\chi=\sum_{i=1}^ra_i \chi_{V_i} \ , \quad \textrm{ with } a_i=\langle\chi,\chi_{V_i}\rangle \ .$$
\begin{Bem}
Since we assume $K$ to be algebraically closed, the characters determine the irreducible representations and for the McKay quiver $\Gamma(\sfG)$ we get that (see e.g., \cite[Cor.~2.16]{FultonHarris})
$$m_{ij}=\langle \chi_{V\otimes V_i}, \chi_{V_j} \rangle = \langle \chi_V \cdot \chi_{V_i}, \chi_{V_j} \rangle \ . $$ Thus, in order to compute the arrows in the McKay quiver of $\sfG$, it is enough if we know the character table of $\sfG$.
\end{Bem}
For a subgroup $\sfH$ of $\sfG$, we denote by $\Res^\sfG_\sfH(W)$ (resp. $\Res^\sfG_\sfH(\chi)$) the restriction of the representation $W$ (resp. the character $\chi$) to $\sfH$. Further, for a representation $W'$ of $\sfH$ with character $\chi'$ we denote the induced representation to $\sfG$ by $\Ind^\sfG_\sfH(W')$ and the induced character by $\Ind^\sfG_\sfH(\chi')$. When the groups $\sfG$ and $\sfH$ are clear, we only write $\Res$ and $\Ind$. We will frequently need 

\begin{Thm}[Frobenius reciprocity] \label{Thm:Frobenius}
Let $\sfH$ be a subgroup of $\sfG$ and suppose that $\varphi$ is a class function on $\sfH$ and that $\chi$ is a class function on $\sfG$. Then
$$\langle \varphi, \Res^\sfG_\sfH(\chi)\rangle_\sfH=\langle\Ind^\sfG_\sfH(\varphi), \chi\rangle_\sfG \ . $$
Here the subscript tells us over which group the pairing has to be computed. Note that sometimes we will use this theorem for the associated representations $W'$ of $\varphi$ and $W$ of $\chi$. On this level we have
$$\Ind^\sfG_\sfH(\Res^\sfG_\sfH(W) \otimes W') = W \otimes \Ind^\sfG_\sfH(W') \ , $$
cf.~\cite[3.3, example 5]{Serre}.
\end{Thm}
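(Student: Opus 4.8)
The final statement is Frobenius reciprocity together with the "projection formula" identity $\Ind^\sfG_\sfH(\Res^\sfG_\sfH(W) \otimes W') = W \otimes \Ind^\sfG_\sfH(W')$. Here is how I would prove it.

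\textbf{Plan.} The pairing identity $\langle \varphi, \Res^\sfG_\sfH(\chi)\rangle_\sfH=\langle\Ind^\sfG_\sfH(\varphi), \chi\rangle_\sfG$ is the classical statement of Frobenius reciprocity for class functions, and I would simply invoke the standard reference (e.g. \cite[Chapter 21]{JamesLiebeck} or \cite[\S 7.2]{Serre}); alternatively one can give the one-line computation by expanding the definition of the induced character $\Ind^\sfG_\sfH(\varphi)(g)=\frac{1}{|\sfH|}\sum_{x\in\sfG,\,x^{-1}gx\in\sfH}\varphi(x^{-1}gx)$ inside the inner product over $\sfG$, reindexing the double sum, and recognizing the result as $\langle \varphi, \Res^\sfG_\sfH(\chi)\rangle_\sfH$. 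Since we assume $|\sfG|\in K^*$ and $K$ algebraically closed, characters behave exactly as in the classical semisimple setting, so no extra care is needed for the coefficients.

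\textbf{The module-level identity.} For the second assertion, the key point is the natural isomorphism of $K\sfG$-modules. Writing $\Ind^\sfG_\sfH(U) = K\sfG \otimes_{K\sfH} U$ for a $K\sfH$-module $U$, I would take $U = \Res^\sfG_\sfH(W)\otimes W'$ and construct the map
\[
K\sfG \otimes_{K\sfH} \bigl(\Res^\sfG_\sfH(W)\otimes W'\bigr) \;\longrightarrow\; W \otimes \bigl(K\sfG\otimes_{K\sfH} W'\bigr),\qquad
g\otimes(w\otimes w')\;\longmapsto\; gw \otimes (g\otimes w').
\]
One checks this is well defined (balanced over $K\sfH$, using that on $\Res W$ the $\sfH$-action is the restricted $\sfG$-action), $K\sfG$-linear for the diagonal action on the target, and has an obvious inverse $w\otimes(g\otimes w')\mapsto g\otimes(g^{-1}w\otimes w')$. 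This is exactly the "tensor identity"/projection formula; I would cite \cite[\S 3.3, Example 5]{Serre} for it as the excerpt already does, and include the explicit formulas above for the reader's convenience. Passing to characters recovers the first identity as a special case with $\varphi=\chi_{W'}$ and $\chi=\chi_W$.

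\textbf{Main obstacle.} There is essentially no obstacle here: this is foundational representation theory and the whole proof is a matter of writing down the standard isomorphism and verifying it is well defined and $\sfG$-equivariant. The only mild subtlety worth flagging is the bookkeeping of which $\sfH$-action is meant on $\Res^\sfG_\sfH(W)\otimes W'$ (the restricted action on the first factor, the given action on the second, diagonally), and correspondingly the diagonal $\sfG$-action on $W\otimes\Ind^\sfG_\sfH(W')$; once these conventions are fixed the verification is immediate. I would therefore keep this "proof" to a few lines, or omit it entirely with a citation, since it is a well-known tool recalled here only for later use.
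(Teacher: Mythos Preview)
Your proposal is correct and matches the paper's treatment: the paper does not prove this theorem at all but simply states it as a standard result, citing \cite[3.3, example 5]{Serre} for the projection formula, exactly as you suggest. Your additional sketch of the explicit tensor-identity isomorphism is accurate and would be a welcome elaboration, but the paper itself omits any proof.
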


\subsection{The reflection groups $G(r,p,n)$} \label{Sub:Grpn}

Here we will state some basic properties for $G(r,p,n)$, for ease of reference. For more information about these groups, see \cite[Chapter 2]{LehrerTaylor}, where it is shown (among other facts) that these groups are generated by pseudo-reflections. The irreducible representations of these groups will be treated in Sections \ref{Sec:Gm1n} and \ref{Sec:Grpn}. 

Note that the integers $r,p,n$ have to be positive and satisfy $p | r$ and $(r,p,n) \neq (2,2,2)$, since this is the only case when $G(r,p,n)$ is not irreducible (for definition of irreducibility and a proof of this fact see e.g. \cite{LehrerTaylor}). \\
These groups can be explicitly described as follows: 
$G(r,p,n) \subseteq \GL(V)$, where $V$ is a $K$-vector space of dimension $n$ consists of $n \times n$ matrices $A$ such that: 
\begin{enumerate}
\item the entries $A_{ij}$ are either $0$ or an $r$-th root of unity, 
\item there is exactly one $A_{ij} \neq 0$ in each row and each column, 
\item the $\frac{r}{p}$-th power of a product of the non-zero entries is $1$.
\end{enumerate}
In other words, a matrix $A$ is in $G(r,p,n)$ if it is of the form $PD$, where $P$ is an $n \times n$ permutation matrix and $D$ is an $n \times n$ diagonal matrix whose entries are $r$-th roots of unity and $(\det D)^{\frac{r}{p}}=1$.

\begin{ex} 
\begin{enumerate}[leftmargin=*]
\item The group $G(1,1,n)$ is the symmetric group $S_n$.
\item The group $G(2,1,n)$ is the Coxeter group $B_n$. It is the semi-direct product $\mu_2^n \rtimes S_n$.
\item
The groups $G(r,r,n)$ are true reflection groups, that is, every generating reflection is of order $2$. %They contain $m {n \choose 2}$ reflections, see \cite[Prop.~2.9]{LehrerTaylor}. 
In particular, if $r=2$, then $G(2,2,n)$ is the Coxeter group $D_n$. 
\item The only other true reflection groups in the family $G(r,p,n)$ are the $G(2p,p,n)$, which can be shown by analyzing the possible orders of reflections \cite[Chapter 2]{LehrerTaylor}. %this is an exercise in LT: ex 15 p38
\end{enumerate}
\end{ex}

\begin{lemma} \label{Lem:GrpnNormal}
The groups $G(r,p,n)$ with $p >1$ and $p|r$  are normal subgroups of $G(r,1,n)$ and satisfy the following short exact sequence
$$ 1 \xrightarrow{} G(r,p,n) \xrightarrow{} G(r,1,n) \xrightarrow{\phi} \mu_p \xrightarrow{} 1 \ , $$
where $\phi(M)=\phi(P \cdot D)=(\det(D)^{\frac{r}{p}})$.
\end{lemma}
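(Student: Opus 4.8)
The plan is to exhibit $\phi$ explicitly as a group homomorphism $G(r,1,n)\to\mu_p$, check surjectivity, and identify its kernel with $G(r,p,n)$; normality of $G(r,p,n)$ is then automatic as the kernel of a homomorphism, and the short exact sequence falls out. First I would recall from the matrix description in Section \ref{Sub:Grpn} that every $M\in G(r,1,n)$ factors uniquely as $M=PD$ with $P$ a permutation matrix and $D$ a diagonal matrix whose diagonal entries are $r$-th roots of unity; in particular $\det(D)=\prod_i D_{ii}$ is itself an $r$-th root of unity, so $\det(D)^{r/p}$ is a $p$-th root of unity and the formula $\phi(M)=\det(D)^{r/p}$ does land in $\mu_p$. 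I should note that $\det(D)$ is intrinsic to $M$, since it equals $\det(M)/\det(P)=\pm\det(M)$ and $\det(P)=\mathrm{sgn}(\sigma)=\pm1$ depends only on the underlying permutation — so $\phi$ is well defined without reference to the (in fact unique) factorization.

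Next I would verify multiplicativity. Given $M=PD$ and $M'=P'D'$, the product is $MM'=PDP'D'=P(DP')(P'^{-1}D'P')\cdot$, more cleanly: $P D P' D' = (PP')\,(P'^{-1}DP')\,D'$, and $P'^{-1}DP'$ is again a diagonal matrix obtained from $D$ by permuting its diagonal entries, hence has the same determinant as $D$. Therefore the diagonal part of $MM'$ has determinant $\det(D)\det(D')$, and $\phi(MM')=(\det D\det D')^{r/p}=\phi(M)\phi(M')$. Surjectivity is immediate: taking $P=I$ and $D=\mathrm{diag}(\zeta,1,\dots,1)$ for $\zeta$ a primitive $r$-th root of unity gives $\phi(M)=\zeta^{r/p}$, a generator of $\mu_p$.

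Finally, the kernel: $\phi(M)=1$ means $\det(D)^{r/p}=1$, which by condition (3) in the description of $G(r,p,n)$ in Section \ref{Sub:Grpn} is exactly the defining condition for $M=PD\in G(r,1,n)$ to lie in $G(r,p,n)$ — note $\det(D)$ is precisely the product of the non-zero entries of $M$. Hence $\ker\phi=G(r,p,n)$, so $G(r,p,n)\trianglelefteq G(r,1,n)$ and the sequence
$$1\xrightarrow{}G(r,p,n)\xrightarrow{}G(r,1,n)\xrightarrow{\phi}\mu_p\xrightarrow{}1$$
is exact, with $[G(r,1,n):G(r,p,n)]=|\mu_p|=p$. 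I expect no serious obstacle here; the one point deserving care is confirming that $\phi$ is well defined — i.e.\ that $\det(D)$, equivalently the product of the non-zero entries, really is independent of how one writes $M=PD$ — which as noted above follows from $\det(D)=\mathrm{sgn}(\sigma)^{-1}\det(M)$ with $\sigma$ the permutation underlying $M$, so the factorization is in fact unique and the ambiguity never arises.
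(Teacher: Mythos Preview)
Your proof is correct and is precisely the direct calculation the paper alludes to (its own proof reads simply ``Direct calculation''); you have supplied the details---well-definedness of $\phi$, multiplicativity via $P'^{-1}DP'$ being a permuted diagonal, surjectivity, and identification of the kernel---that the paper leaves implicit. There is nothing to add.
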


\begin{proof} Direct calculation.
\end{proof}

\subsection{Young diagrams} \label{Sub:Youngdiag}

We briefly recall here Young diagrams and tableaux and their relation to representations of $S_n$ and the $G(r,p,n)$. See \cite{fulton1997young} for a more in-depth treatment of Young tableaux.

Recall that a partition $\lambda$ of a positive integer $n$ is a weakly decreasing sequence of positive integers whose sum is $n$. That is, $\lambda = \{\lambda_1, \lambda_2, \cdots,\lambda_k \}$, such that $\lambda_i \geq \lambda_{i+1}$ for all $i$ and $\sum_i \lambda_i= n$.
A partition can be represented visually by a \emph{Young diagram}:
Given a partition $\lambda$ of $n$, the Young diagram associated to $\lambda$ is defined to be a collection of cells arranged in left-justified rows whose lengths are the elements of the partition $\lambda$. These partitions uniquely define the Young diagram. Thus, where no confusion is possible, we will simply refer to $\lambda$ as the Young diagram or partition interchangeably.

\begin{ex}
 Let $\lambda = (3,3,2)$ be a partition of $8$, then the corresponding Young diagram is $\ydiagram{3,3,2}$.
\end{ex}

A \emph{Young tableau} is a Young diagram, of a given size $n$, that has its cells populated with numbers between $1$ and $n$, subject to the following constraints: numbers that appear in the same row, must be \emph{weakly} increasing and numbers in the same column must be \emph{strictly} increasing. A \emph{standard} Young tableau is a Young tableau that uses the numbers between $1$ and $n$ exactly once each.

\begin{ex}
The Young diagram $\ydiagram{2,2}$ of size 4 has only two possible standard tableaux: $\tiny \young(12,34)$ and $\tiny \young(13,24)$. 
\end{ex}
It is a well-known result that the irreducible representations of $S_n$ are in bijection to Young diagrams of size $n$, see e.g., \cite[Prop.~7.2.1]{fulton1997young}. We will usually write $V_\lambda$ when we mean the representation of $S_n$ corresponding to the partition $\lambda$ of $n$ and we will also use $\lambda$ to denote the corresponding Young diagram.

The dimension of each irreducible representation is given by the number of possible standard tableaux on its associated Young diagram. One enumerates the cells in a given Young diagram $\lambda$ by $(i,j)$, where $i$ denotes the row and $j$ the column. Note that one can compute the dimension of $V_\lambda$ with the hooklength formula, see \cite[4.12]{FultonHarris}. 

The representations of $G(r,p,n)$ can be described by $r$-tuples of Young diagrams. This is explained in Sections \ref{Sub:Gr1nIrreps} and \ref{Sub:IrrepGrpn}.

\section{McKay quivers of the symmetric groups $G(1,1,n)$} \label{Sec:Sn}

We describe the McKay quiver of the group $S_n$ with its standard irreducible representation as a reflection group $V_{\stn}$.  
As discussed in Section \ref{Sub:Youngdiag} the vertices of the McKay quiver are given by partitions $\lambda$ of $n$ (which we will label by Young diagrams).

\begin{ex}
Let $\lambda=(n-1,1)$ be a partition of $n$. Then one can show (see \cite[exercise 4.6]{FultonHarris}) that $V_{\lambda}=V_{\stn}=V$ and moreover that the partition (a \emph{hook})
$$\lambda=(n-s,\underbrace{1, \ldots, 1}_{s})$$
corresponds to the irreducible representation $\bigwedge^sV$. In particular, for $s=0$ we see that $V_{(n)}=V_{\triv}$.
\end{ex}

Consider $S_{n-1}$ as a subgroup of $S_n$ by taking the permutations 
that fix $n$, and let 
\begin{center}
\begin{tabular}{lll}
$\Res:$ & $\Mod  KS_n$ & $\longrightarrow \Mod KS_{n-1}$ \\
$\Ind:$ &  $\Mod  KS_{n-1}$&  $\longrightarrow \Mod KS_{n}$
\end{tabular}
\end{center}
be the restriction and induction functors.  Frobenius reciprocity (Thm.~\ref{Thm:Frobenius}) states that these functors are adjoint. In order to describe the arrows in the McKay quiver, we need the following lemmas.

\begin{lemma}[\cite{FultonHarris},Ex.~4.43] Let $\lambda$ be a partition of $n$.  Then
  $$  \Res V_{\lambda} = \bigoplus_{\tau} V_\tau \, ,$$
  where the sum is over all partitions $\tau$ of $n-1$ that are obtained by removing a single block from $\lambda$.
\end{lemma}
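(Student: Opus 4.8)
The plan is to deduce the restriction rule from the corresponding ``box-adding'' rule for \emph{induction} via Frobenius reciprocity, and to establish that rule using the Frobenius characteristic map and Pieri's rule. Throughout, recall that $|S_{n-1}|,|S_n|\in K^*$, so $KS_{n-1}$ and $KS_n$ are semisimple and their representation theory agrees with the characteristic-zero situation; in particular every module is the direct sum of its isotypic components, and such a module is determined by the multiplicities $\langle\,\cdot\,,V_\mu\rangle$.

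First I would prove: for a partition $\tau$ of $n-1$,
$$\Ind^{S_n}_{S_{n-1}} V_\tau \;=\; \bigoplus_{\lambda} V_\lambda,$$
the sum ranging over all partitions $\lambda$ of $n$ obtained from $\tau$ by adjoining a single cell (so this induction is multiplicity-free). Since $S_1$ is trivial, $\Ind^{S_n}_{S_{n-1}} V_\tau = \Ind^{S_n}_{S_{n-1}\times S_1}(V_\tau \boxtimes \triv)$. Under the characteristic map $\mathrm{ch}\colon \bigoplus_m R(S_m)\to \Lambda$ (the ring of symmetric functions), which is a ring isomorphism sending $[V_\mu]$ to the Schur function $s_\mu$ and sending induction products to products, the class of $\Ind^{S_n}_{S_{n-1}}V_\tau$ goes to $s_\tau\cdot s_{(1)}$, where $s_{(1)}=h_1=\mathrm{ch}(\triv_{S_1})$. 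Pieri's rule then gives $s_\tau\cdot h_1=\sum_\lambda s_\lambda$, the sum over partitions $\lambda$ of $n$ with $\lambda/\tau$ a horizontal strip of size one, i.e.\ a single added cell. Applying $\mathrm{ch}^{-1}$ yields the displayed decomposition. (If one prefers to keep this self-contained, the same identity can be obtained from the Frobenius character formula by checking directly that the induction product corresponds to multiplying the associated symmetric functions.)

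Next I would invoke Frobenius reciprocity (Theorem \ref{Thm:Frobenius}): for $\lambda\vdash n$ and $\tau\vdash n-1$,
$$\langle \Res^{S_n}_{S_{n-1}} V_\lambda,\, V_\tau\rangle_{S_{n-1}} \;=\; \langle V_\lambda,\, \Ind^{S_n}_{S_{n-1}} V_\tau\rangle_{S_n}.$$
By the previous step the right-hand side equals $1$ when $\lambda$ is obtained from $\tau$ by adding a cell --- equivalently, when $\tau$ is obtained from $\lambda$ by removing a single block --- and equals $0$ otherwise. By semisimplicity of $KS_{n-1}$ these multiplicities determine $\Res^{S_n}_{S_{n-1}}V_\lambda$ completely, giving $\Res V_\lambda=\bigoplus_\tau V_\tau$ over all $\tau\vdash n-1$ obtained from $\lambda$ by deleting one block, as claimed.

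The main obstacle is the input from symmetric-function theory: that $\mathrm{ch}$ is an isomorphism converting induction into products of Schur functions, together with Pieri's rule; once these are in hand the rest is formal. An alternative route internal to the representation theory of $S_n$, which I would use if one wants to avoid $\Lambda$, is Young's seminormal form: the seminormal basis of $V_\lambda$ is indexed by standard Young tableaux of shape $\lambda$, the action of $S_{n-1}$ on this basis has matrix coefficients depending only on the positions of $1,\dots,n-1$, and grouping the basis vectors according to the (necessarily removable corner) cell $c$ occupied by $n$ exhibits $\Res V_\lambda$ as the direct sum, over removable corners $c$ of $\lambda$, of copies of $V_{\lambda\setminus c}$.
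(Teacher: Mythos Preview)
Your argument is correct: deducing the branching rule from Pieri via the Frobenius characteristic, together with Frobenius reciprocity, is a standard and valid route, and your alternative via the seminormal basis also works.

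However, the paper does not actually prove this lemma. It is stated with a citation to \cite{FultonHarris}, Ex.~4.43, and used as a black box in the proof of Theorem~\ref{McKayquiverSn}. So there is no ``paper's own proof'' to compare against; you have supplied a proof where the authors simply invoke the literature. Your write-up goes well beyond what the paper requires, but nothing in it is wrong.
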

\begin{lemma}[\cite{FultonHarris},Ex.~3.16,Ex.~3.13] 
Write $V_{\trv}$ for the trivial representation. For any representation $W$ of $S_n$, if we induce and restrict to a general subgroup, we get
\begin{align*} \Ind \Res W &\cong  W \otimes \Ind V_{\trv} \\
\intertext{and for the particular case of $S_{n-1} \subset S_n$ we have}
\Ind V_{\trv} &\cong  V_{\stn} \oplus V_{\trv} \, .
\end{align*}
\end{lemma}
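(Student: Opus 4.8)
The plan is to handle the two isomorphisms separately; both are standard facts about induction and restriction.

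For the first isomorphism, the quickest route is simply to invoke the form of Frobenius reciprocity already recorded in Theorem~\ref{Thm:Frobenius}, namely the identity $\Ind^\sfG_\sfH(\Res^\sfG_\sfH(W)\otimes W')\cong W\otimes\Ind^\sfG_\sfH(W')$: applying it with $W'$ the trivial $\sfH$-module and using that $\Res^\sfG_\sfH(W)\otimes V_{\trv}\cong\Res^\sfG_\sfH(W)$ gives the claim at once. If one prefers a self-contained argument, I would instead exhibit the isomorphism directly on modules: writing $\Ind^\sfG_\sfH(-)=K\sfG\otimes_{K\sfH}(-)$, the map $K\sfG\otimes_{K\sfH}W\to(K\sfG\otimes_{K\sfH}K)\otimes_K W$ sending $g\otimes w\mapsto(g\otimes 1)\otimes gw$ is well-defined over $K\sfH$ (using that $W$ is a $K\sfH$-module), is $\sfG$-equivariant when $\sfG$ acts diagonally on the target, and has inverse $(g\otimes 1)\otimes w\mapsto g\otimes g^{-1}w$. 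That is a short routine verification.

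For the second isomorphism I would combine the branching rule for $S_{n-1}\subset S_n$ from the preceding Lemma with Frobenius reciprocity. As a representation of $S_{n-1}$ the trivial module $V_{\trv}$ is $V_{(n-1)}$, so for every partition $\mu$ of $n$ we have
$$\langle\Ind^{S_n}_{S_{n-1}}V_{(n-1)},V_\mu\rangle=\langle V_{(n-1)},\Res^{S_n}_{S_{n-1}}V_\mu\rangle,$$
and by the branching rule the right-hand side is the number of ways to obtain the one-row Young diagram $(n-1)$ by deleting a single box from $\mu$. This happens precisely for $\mu=(n)$ and for $\mu=(n-1,1)$, in each case in exactly one way. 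Hence $\Ind^{S_n}_{S_{n-1}}V_{\trv}\cong V_{(n)}\oplus V_{(n-1,1)}$, which by the Example identifying hooks with exterior powers equals $V_{\trv}\oplus V_{\stn}$. (Alternatively one can observe that $\Ind^{S_n}_{S_{n-1}}V_{\trv}$ is the permutation module on the cosets $S_n/S_{n-1}$, i.e.\ the natural permutation representation on $n$ letters, whose character $\chi$ satisfies $\langle\chi,\chi\rangle=2$ and $\langle\chi,\chi_{\trv}\rangle=1$ by counting $S_n$-orbits on $\{1,\dots,n\}$ and on $\{1,\dots,n\}^2$; thus $\chi$ is the sum of the trivial character and one further irreducible of degree $n-1$, necessarily $\chi_{V_{\stn}}$.)

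I do not anticipate a genuine obstacle. The only points requiring any care are matching the module-theoretic conventions (left modules, the side on which $K\sfH$ acts) in the direct proof of the first isomorphism, and checking that $(n)$ and $(n-1,1)$ are the only partitions of $n$ whose restriction contains $V_{(n-1)}$; both are elementary.
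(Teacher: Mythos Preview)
Your argument is correct. Note, however, that the paper does not actually supply its own proof of this lemma: it is stated with a citation to \cite{FultonHarris}, Exercises~3.13 and~3.16, and left at that. So there is no proof in the paper to compare against.

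That said, your two arguments are the standard ones and are exactly in the spirit of the cited exercises. The first isomorphism is precisely the projection formula $\Ind(\Res W\otimes W')\cong W\otimes\Ind W'$ already recorded in the paper's Theorem~\ref{Thm:Frobenius}, specialized to $W'=V_{\trv}$; your alternative direct module-theoretic bijection is also fine. For the second isomorphism, both your branching-rule computation and your permutation-module/Burnside argument are valid and well known; the former has the advantage of using only tools already introduced in the paper.
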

We write the number of distinct parts of a partition $\lambda$ %=(\lambda_1,\ldots,\lambda_k)$
 as $p(\lambda)$.

\begin{Thm} \label{McKayquiverSn}
Consider the McKay quiver $\Gamma(S_n,\stn)$ of $S_n$ with vertices indexed by partitions of $n$.
Let $\tau \neq \lambda$ be partitions of $n$.
Then there is an arrow from $\lambda$ to $\tau$ if and only if $\lambda$ can be formed from $\tau$ by moving a single block.
The number of loops on $\lambda$ is $p(\lambda)-1,$ one less than the number of distinct parts of $\lambda$.
\end{Thm}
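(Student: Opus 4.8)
The plan is to compute the McKay quiver entries $m_{\lambda\tau} = \dim_K \Hom_{KS_n}(V_{\stn} \otimes V_\lambda, V_\tau)$ using Frobenius reciprocity together with the two branching lemmas just stated. The key identity is that, by the tensor-form of Frobenius reciprocity (Theorem~\ref{Thm:Frobenius}) applied to $S_{n-1} \subset S_n$,
$$V_{\stn} \otimes \Ind V_{\trv} = \Ind \Res V_{\stn} \otimes V_{\trv}$$
is not quite what I want; instead I would start from $\Ind\Res V_\lambda \cong V_\lambda \otimes \Ind V_{\trv} \cong V_\lambda \otimes (V_{\stn} \oplus V_{\trv}) = (V_\lambda \otimes V_{\stn}) \oplus V_\lambda$. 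Hence $V_\lambda \otimes V_{\stn} \cong \bigl(\Ind\Res V_\lambda\bigr) \ominus V_\lambda$ as virtual representations, and since everything is a genuine representation this is an honest direct-sum complement. So the multiplicities I need are
$$m_{\lambda\tau} = \bigl[\Ind\Res V_\lambda : V_\tau\bigr] - [\lambda = \tau].$$

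Next I would expand $\Ind\Res V_\lambda$ using the two branching lemmas: $\Res V_\lambda = \bigoplus_{\mu} V_\mu$ over partitions $\mu$ of $n-1$ obtained by removing one block from $\lambda$, and $\Ind V_\mu = \bigoplus_{\nu} V_\nu$ over partitions $\nu$ of $n$ obtained by adding one block to $\mu$ (this is the adjoint/transpose statement of the restriction lemma, which follows from it by Frobenius reciprocity since the branching coefficients are symmetric). Therefore $[\Ind\Res V_\lambda : V_\tau]$ counts the number of partitions $\mu \vdash n-1$ such that $\mu$ is obtained from $\lambda$ by removing a block \emph{and} $\tau$ is obtained from $\mu$ by adding a block. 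I would then do the combinatorial bookkeeping in two cases. When $\tau \neq \lambda$: such a $\mu$ exists iff $\tau$ and $\lambda$ share a common "sub-partition" of size $n-1$, which happens exactly when one can pass from $\lambda$ to $\tau$ by removing one cell and adding one cell elsewhere, i.e. by "moving a single block"; and in that case $\mu$ is \emph{unique} (it is the intersection diagram $\lambda \cap \tau$), so $m_{\lambda\tau} \in \{0,1\}$, giving the stated arrow condition. When $\tau = \lambda$: the number of such $\mu$ equals the number of distinct removable corners of $\lambda$, because removing a block at a corner and then re-adding a block must return to $\lambda$, which forces re-adding at the same row or an equivalent corner — more precisely, the count of $\mu$ with $\lambda \to \mu \to \lambda$ equals the number of $\mu$ obtained by deleting a removable corner (each then admits $\lambda$ as an "add-a-block"), and that number is the number of distinct part-sizes $p(\lambda)$. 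Subtracting the $[\lambda=\tau]=1$ term yields $p(\lambda)-1$ loops.

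The main obstacle — really the only non-routine point — is the loop count: verifying that the number of partitions $\mu \vdash n-1$ with both $\lambda \to \mu$ (remove a block) and $\mu \to \lambda$ (add a block) equals the number of removable corners of $\lambda$, equivalently $p(\lambda)$. One must check that if $\mu$ is obtained by removing a corner cell $c$ from $\lambda$, then $\lambda$ is always recoverable from $\mu$ by adding a cell (trivially, add $c$ back — so every removable corner contributes), and conversely that no \emph{extra} $\mu$ arises, i.e. one cannot remove a cell from one row and add a cell to a different row and land back at $\lambda$; this last point is immediate since removing from row $i$ and adding to row $j \neq i$ changes the multiset of row lengths. Then one identifies the number of removable corners of a Young diagram with the number of distinct parts $p(\lambda)$, which is elementary. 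I would also need to double-check that $V_{\stn} \otimes V_\lambda$ genuinely \emph{contains} $V_\lambda$ as a summand precisely $[\Ind\Res V_\lambda:V_\lambda]-1 \geq 0$ times, i.e. that the virtual-representation subtraction is legitimate — this follows because $\Ind V_{\trv} = V_{\stn}\oplus V_{\trv}$ contains $V_{\trv}$ with multiplicity exactly one, so $V_\lambda\otimes\Ind V_{\trv}$ contains $V_\lambda\otimes V_{\trv}=V_\lambda$ with multiplicity one as a direct summand complement of $V_\lambda\otimes V_{\stn}$. Everything else is routine combinatorics on Young diagrams.
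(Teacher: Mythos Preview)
Your proposal is correct and follows essentially the same argument as the paper: both use $\Ind\Res V_\lambda \cong V_\lambda \otimes (V_{\stn}\oplus V_{\trv})$ to reduce the multiplicity $m_{\lambda\tau}$ to a count of partitions $\mu\vdash n-1$ lying below both $\lambda$ and $\tau$, then handle the cases $\lambda\neq\tau$ and $\lambda=\tau$ combinatorially. The only cosmetic difference is that the paper packages this count as $\dim\Hom_{KS_{n-1}}(\Res V_\lambda,\Res V_\tau)$ (common irreducible summands of the two restrictions) rather than expanding $\Ind\Res V_\lambda$ via the induction branching rule as you do, but these are the same number by one more application of Frobenius reciprocity.
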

\begin{proof}
Consider two representations $V_\lambda, V_\tau$ of $S_n$.  We have
\begin{eqnarray*}
 \Hom_{KS_{n-1}}(\Res V_\lambda, \Res V_\tau) & \cong & \Hom_{KS_n}(\Ind \Res V_\lambda, V_\tau) \\
& \cong & \Hom_{KS_n}(V_\lambda \otimes \Ind V_{\trv}, V_\tau) \\
& \cong & \Hom_{KS_n}(V_\lambda \otimes (V_{\stn} \oplus V_{\trv}), V_\tau) \\
& \cong &  \Hom_{KS_n}((V_\lambda \otimes V_{\stn}) \oplus V_{\lambda}, V_\tau) \ .
\end{eqnarray*}
Now if $\lambda \neq \tau$ we have that $\Hom_{KS_n}( V_{\lambda}, V_\tau) =0.$  Since $V_{\stn} \cong V_{\stn}^\vee$ 
we obtain  
$$\Hom_{KS_n}(V_\lambda \otimes V_{\stn}, V_\tau) \cong \Hom_{KS_n}(V_\lambda,V_{\stn}\otimes V_\tau) \ , $$ 
whose dimension is the number of arrows from $\lambda$ to $\tau$ in the McKay quiver.
So we only need to note that $\dim \Hom_{KS_{n-1}}(\Res V_\lambda, \Res V_\tau)$ is also the number of isomorphic irreducible summands of 
$\Res V_\lambda$ and  $\Res V_\tau$.  Since we obtain these from removing single blocks from $\lambda$ and $\tau$ there can be at most one 
in common and we obtain the description as stated in the theorem.
Now suppose that $\lambda=\tau$ so we obtain 
\begin{eqnarray*}
\Hom_{KS_{n-1}}(\Res V_\lambda, \Res V_\lambda) & = & \Hom_{KS_n}((V_\lambda \otimes V_{\stn}) \oplus V_{\lambda}, V_\lambda)\\
& = &  \Hom_{KS_n}(V_\lambda \otimes V_{\stn}, V_\lambda) \oplus K\\
& \cong &  \Hom_{KS_n}(V_\lambda, V_{\stn}\otimes V_\lambda) \oplus K
\end{eqnarray*}
Now note that $\dim \Hom_{KS_{n-1}}(\Res V_\lambda, \Res V_\lambda)-1$ is the number of loops on $\lambda$ and 
 this is also the number of partitions $\tau$ of $n-1$ obtained from $\lambda$ by removing one block.  Note that this number is $p(\lambda)-1$.
\end{proof}

\begin{example}  \label{Ex:S4quiver}
Let $G=G(1,1,4)$. Here $G=S_4$ and we have $5$ irreducible representations, labelled by $K_{\triv}=\ydiagram{4}$, $V=\ydiagram{3,1}$, $W=\ydiagram{2,2}$, $V'=\ydiagram{2,1,1}$, $K_{\det}=\ydiagram{1,1,1,1}$. Then the recipe from Theorem \ref{McKayquiverSn} yields the McKay quiver $\Gamma(S_4)$ below in  \eqref{Eq:S4-McKayQuiver}. Note that there are loops in the McKay quiver appearing. By the theorem, for $n\geq 6$ there will be vertices with $\geq 2$ loops.
\begin{equation} \label{Eq:S4-McKayQuiver}
{\begin{tikzpicture}[baseline=(current  bounding  box.center)] 
\ytableausetup{centertableaux,boxsize=0.4em}
\node (triv) at (0,0) {$\ydiagram{4}$};

\node (R1) at (2,0) {$\ydiagram{3,1}$} ;
\node (R2) at (4,0)  {$\ydiagram{2,1,1}$};
\node (det) at (6,0)  {$\ydiagram{1,1,1,1}$};
\node (W) at (3,-1.5)  {$\ydiagram{2,2}$};

\draw [->,bend left=25,looseness=1,pos=0.5] (triv) to node[]  [above]{$ $} (R1);
\draw [->,bend left=25,looseness=1,pos=0.5] (R1) to node[]  [above]{$ $} (triv);

\draw [->,bend left=20,looseness=1,pos=0.5] (R1) to node[] [below] {$ $} (R2);
\draw [->,bend left=20,looseness=1,pos=0.5] (R2) to node[] [below] {$ $} (R1);

\draw [->,bend left=25,looseness=1,pos=0.5] (R2) to node[] [below] {$ $} (det);
\draw [->,bend left=25,looseness=1,pos=0.5] (det) to node[] [below] {$ $} (R2);

\draw [->,bend left=20,looseness=1,pos=0.5] (R1) to node[] [below] {$ $} (W);
\draw [->,bend left=20,looseness=1,pos=0.5] (W) to node[] [below] {$ $} (R1);
\draw [->,bend left=20,looseness=1,pos=0.5] (R2) to node[] [below] {$ $} (W);
\draw [->,bend left=20,looseness=1,pos=0.5] (W) to node[] [below] {$ $} (R2);

\path[->,every loop/.style={looseness=5.8}] (R2)
         edge  [in=120,out=60,loop]  ();
         
 \path[->,every loop/.style={looseness=8}] (R1)
         edge  [in=120,out=60,loop]  ();

\node (DD) at (6.3,0) {.};
\end{tikzpicture}} 
\end{equation}
This quiver appeared in \cite[Section 6]{BFI}, but there was no computation provided.

\end{example}

\section{McKay quivers of $G(r,1,n)$} \label{Sec:Gm1n}

\subsection{Irreducible representations of $G(r,1,n)$} \label{Sub:Gr1nIrreps}
 In \cite{ariki1994hecke} generators and relations for the reflection groups $G(r,1,n)$ were determined, as well as a combinatorial description of the irreducible representations of $G(r,1,n)$ in terms of partitions of $n$ (resp.~Young diagrams): every irreducible representation $V_\lambda$ corresponds to an $r$-tuple $\lambda=(\lambda^{(1)}, \ldots, \lambda^{(r)})$, where $\lambda^{(i)}$ is a partition of $n_i$ for $0 \leq n_i \leq n$ and $i=1, \ldots, r$ such that $\sum_{i=1}^rn_i=n$. Note here that $\lambda^{(i)}$ can be empty (the partition of $0$). Thus we can write $\lambda$ as an $r$-tuple of Young diagrams, where we denote the empty set with $-$. The vector space $V_\lambda$ is the formal linear combination of all possible standard Young tableaux of shape $\lambda$. We will denote such $r$-tuple of standard Young tableaux by bold ${\bf t}$.

\begin{example}
Let $\lambda = \left(\ydiagram{2,1}, \ydiagram{1}, - \right)$ be a triple of Young diagrams of size $4$. Then $V_\lambda$, the corresponding irreducible representation of $G(3,1,4)$, is the span of ${\bf t}_i$, $i=1, \ldots, 8$:
\begin{multline*}
\left\{ \left( \tiny \young(12,3)\ ,\young(4) \ , - \right) \ , \left( \tiny\young(13,2)\ ,\young(4) \ , - \right), \left( \tiny\young(12,4) \ ,\young(3) \ , - \right), \left( \tiny \young(14,2) \ ,\young(3)\ , - \right), \right . \\
	 \left .\left( \tiny \young(13,4) \ ,\young(2)\ , - \right), \left( \tiny \young(14,3) \ ,\young(2) \ , - \right) , \left( \tiny \young(23,4) \ ,\young(1) \ , - \right), \left( \tiny \young(24,3) \ ,\young(1) \ , - \right) \right\},
\end{multline*}
showing that $V_\lambda$ is $8$-dimensional.
\end{example}
Note that the dimension of $V_\lambda$ can also be computed with a hook length formula, see Thm.~3.1.8~\cite{LewisThesis}.

The next step is to identify the defining and the trivial representation of $G(r,1,n)$. For this, we have to identify the action of the generators of $G(r,1,n)$ on the $r$-tuples of standard Young tableaux ${\bf t}$.   
In \cite[Cor.~3.14]{ariki1994hecke}, the action of the generators $s_k$ on the $r$-tuples $\mathbf{t}$ is explicitly demonstrated. Since we make use of this action we reproduce it here:

Let $\mathbf{t}_p$ be such an $r$-tuple of Young tableaux. If one can exchange the cells labelled $k$ and $k-1$ and still get a valid standard tableaux, then denote $\mathbf{t}_q$ be the same $r$-tuple with the same diagram, but with the cells labelled $k$ and $k-1$ exchanged. If we fix $\xi$, a primitive $r^{th}-$root of unity, then
 \[ s_1 \mathbf{t}_p = \xi^{i-1}\mathbf{t}_p \textnormal , \]
where the digit '$1$' appears in the $i^{th}$ position of the $r-$tuple. For $k>1$ we have the cases:
\[
s_k \mathbf{t}_p = \begin{cases} \mathbf{t}_p & \textnormal{if $k$ and $k-1$ are in the same row} \\
								-\mathbf{t}_p &\textnormal{if $k$ and $k-1$ are in the same column} \\
								\mathbf{t}_q & \textnormal{otherwise}
							\end{cases} 
\]

Moreover, later (see proof of Prop.~\ref{Prop:IndRes}) we will need the elements $t_k$ of $G(r,1,n)$, which are constructed iteratively by $t_1:=s_1$ and 
$$t_k= s_k \cdot t_{k-1} \cdot s_k \ ,  \text{ for } 1<k \leq n \ .$$
 By \cite[Prop.~3.16]{ariki1994hecke} these elements act on the $r$-tuples $\mathbf{t}_p$ by:
\[ t_k \mathbf{t}_p = \xi^{i-1} \mathbf{t}_p \textnormal , \]
where the digit '$k$' appears in the $i^{th}$ position of the $r-$tuple.

\begin{lemma} Let $V_\tau$ be the space spanned by formal linear combinations of standard Young tableaux on the following Young diagram:
\[ \tau = \left( \ydiagram{2} \cdots \ydiagram{1}, -, \cdots, - \right) \textnormal . \]
Then $V_\tau$ is the trivial representation of $G(r,1,n)$.
\end{lemma}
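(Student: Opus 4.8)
The plan is to compute the action of the generators of $G(r,1,n)$ on the one-dimensional space $V_\tau$ and check it is trivial. First I would observe that the Young diagram $\tau = (\ydiagram{2}\cdots\ydiagram{1}, -, \dots, -)$ — that is, a single row of length $n$ in the first component and empty diagrams elsewhere — admits exactly one standard Young tableau: the numbers $1, 2, \dots, n$ filled left to right into the single row of $\lambda^{(1)}$, since weak increase along a row imposes no constraint beyond the fact that each label is used once and there is only one cell-ordering available. Hence $\dim V_\tau = 1$, and to identify the representation it suffices to evaluate each generator on the unique basis vector $\mathbf{t}$.

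Next I would apply the Ariki--Koike formulas reproduced just above the statement. For the generator $s_1$: the digit $1$ sits in the first component of the $r$-tuple, i.e. in position $i=1$, so $s_1\mathbf{t} = \xi^{1-1}\mathbf{t} = \mathbf{t}$. For $s_k$ with $k>1$: the labels $k$ and $k-1$ both lie in the single row of $\lambda^{(1)}$, hence ``$k$ and $k-1$ are in the same row,'' and the formula gives $s_k\mathbf{t} = \mathbf{t}$. Since the $s_k$ ($1 \le k \le n$) generate $G(r,1,n)$, every group element acts as the identity on $V_\tau$, so $V_\tau \cong V_{\triv}$.

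The only thing to be careful about — and the main (very minor) obstacle — is confirming that $\tau$ is a legitimately formed $r$-tuple indexing an irreducible representation of $G(r,1,n)$, namely that $|\lambda^{(1)}| = n$ and $\sum_i n_i = n$, which is clear, and that no column-constraint ever triggers the $-\mathbf{t}$ case: since $\lambda^{(1)}$ is a single row, no two labels share a column, so this case never arises. One could alternatively phrase the argument via characters, noting that a one-dimensional representation on which all reflection generators act trivially must be the trivial character, but the direct computation above is cleanest and uses only the explicitly stated action of the $s_k$.
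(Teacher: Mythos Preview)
Your proposal is correct and takes essentially the same approach as the paper: observe that $V_\tau$ is one-dimensional with unique standard tableau $\mathbf{t}$, then verify that each generator $s_k$ acts trivially on $\mathbf{t}$ via the Ariki--Koike formulas. In fact, your write-up is more detailed than the paper's, which simply asserts that ``it is easy to see that the generators $s_k$ act trivially on $\mathbf{t}$'' without spelling out the case split for $s_1$ versus $s_k$ with $k>1$.
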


\begin{proof}
It is clear that $V_\tau$ is one-dimensional: there is only one possible standard tableau on $\tau$:
\[ \mathbf t = \left( \tiny \young(12) \cdots \young(n) \ , - , \cdots,  - \right) \textnormal . \]
It is easy to see that the generators $s_k$ act trivially on $\mathbf t$. Hence $V_\tau $ is the trivial representation of $G(r,1,n)$. 
\end{proof}

\begin{lemma} 
Let $V_\alpha$ be the space spanned by the $r$-tuples of standard Young tableaux with the following Young diagram:
\[\alpha =  \left( \ydiagram{2} \cdots \ydiagram{1} , \ydiagram{1}, -, \cdots, -\right)\textnormal .\]

Let the $r$-tuple $\mathbf{t}_i$ represent the tableau with the numeral $i$ appearing in the single cell of the second entry of the $r-$tuple.
Then we have that $V_\alpha = \mathrm{span} \{ \mathbf{t}_1, \mathbf{t}_2, \cdots , \mathbf{t}_n \}$.
Then the irreducible representation $V_\alpha=V_{\stn}$ is the standard representation of $G(r,1,n)$.
\end{lemma}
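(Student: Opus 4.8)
The plan is to compute the action of all the Ariki--Koike generators of $\sfG=G(r,1,n)$ directly on the basis $\{\mathbf{t}_1,\dots,\mathbf{t}_n\}$ of $V_\alpha$ and to compare the resulting matrices with those of the defining embedding $\sfG\hookrightarrow\GL(V)$, $V=K^{n}$ with basis $e_1,\dots,e_n$. Recall from \cite{ariki1994hecke} that $\sfG$ is generated by $s_1,\dots,s_n$, where $s_1$ is the diagonal generator of order $r$ acting on $V$ by $s_1=\operatorname{diag}(\xi,1,\dots,1)$, and $s_k$ for $k\ge 2$ is the Coxeter transposition acting on $V$ by interchanging $e_{k-1}$ and $e_k$. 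It therefore suffices to check that the linear bijection $V_\alpha\to V$, $\mathbf{t}_i\mapsto e_i$, intertwines every $s_k$; since $V_\alpha$ is one of Ariki--Koike's irreducibles, this identifies it with $V_{\stn}$.

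\textbf{The generator $s_1$.} First I would record that in $\mathbf{t}_i$ the numeral $i$ occupies the unique box of the second component while $\{1,\dots,n\}\setminus\{i\}$ fills the single row of the first component in increasing order. Using the rule $s_1\mathbf{t}_p=\xi^{j-1}\mathbf{t}_p$, where $j$ is the index of the component of the $r$-tuple containing the numeral $1$: in $\mathbf{t}_1$ the box labelled $1$ sits in the second component, so $s_1\mathbf{t}_1=\xi\mathbf{t}_1$, whereas for $i\ge 2$ the numeral $1$ sits in the first component, so $s_1\mathbf{t}_i=\mathbf{t}_i$. Thus $s_1$ acts as $\operatorname{diag}(\xi,1,\dots,1)$ on $\{\mathbf{t}_1,\dots,\mathbf{t}_n\}$, matching its action on $V$.

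\textbf{The generators $s_k$, $k\ge 2$.} Next I would compute $s_k\mathbf{t}_i$ by a short case analysis on the positions of $k$ and $k-1$. If $i\notin\{k-1,k\}$ then $k$ and $k-1$ both lie in the single row of the first component, hence in the same row, so $s_k\mathbf{t}_i=\mathbf{t}_i$. If $i=k$ (resp.\ $i=k-1$) then one of $k,k-1$ is the lone box of the second component and the other lies in the first component; being in distinct components they share neither a row nor a column, so we are in the ``otherwise'' branch of Ariki--Koike's rule, the exchange of the two boxes is a legal standard tableau, and one checks it is exactly $\mathbf{t}_{k-1}$ (resp.\ $\mathbf{t}_k$). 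Hence $s_k$ transposes $\mathbf{t}_{k-1}$ and $\mathbf{t}_k$ and fixes the other basis vectors, matching the transposition action on $e_1,\dots,e_n$. Combining with the $s_1$ computation, $\mathbf{t}_i\mapsto e_i$ commutes with all generators, so $V_\alpha\cong V_{\stn}$.

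\textbf{Main obstacle.} The only mildly delicate point is the ``otherwise'' step: one must confirm that swapping the boxes labelled $k$ and $k-1$ when they lie in different components of the $r$-tuple always yields a genuine standard tableau (so that branch genuinely applies) and then identify the result with the correct $\mathbf{t}_j$. This is immediate once one observes that, since those two boxes lie in distinct components, the neighbours of each within its own component are forced to differ from $k$ and $k-1$ by at least two, so the strict increase along rows and columns is preserved. The boundary instances $i=1$, $i=n$, $k=2$ are covered by the same argument and require no separate treatment.
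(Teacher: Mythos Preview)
Your proof is correct and follows essentially the same approach as the paper: both compute the action of the Ariki--Koike generators $s_1,\dots,s_n$ on the basis $\{\mathbf{t}_1,\dots,\mathbf{t}_n\}$ and verify that the resulting matrices coincide with those of the defining representation. Your version is in fact more detailed than the paper's, which leaves the $s_k$ ($k\ge 2$) step as a ``straightforward computation''; one small imprecision is the phrase ``differ from $k$ and $k-1$ by at least two'' in your obstacle paragraph (the neighbour $k-2$ differs from $k-1$ by one), but the conclusion that the swap yields a valid standard tableau is nonetheless correct.
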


\begin{proof}
Note that the choice of $i$ in the second component of $\mathbf{t}_i$ uniquely defines the $n-1$ cells of the first component, so it  is obvious that $\dim V_\alpha = n$. 

From here, we can explicitly construct the matrices associated to the generators of $G(r,1,n)$. For $s_1$ we have that:
\[
s_1: \mathbf{t}_i \longmapsto \begin{cases} \xi \cdot \mathbf{t}_i \textrm{ for $i=1$ , } \\
1 \cdot \mathbf{t}_i \textrm{ for $i > 1$} \ .
\end{cases}
\]
So the matrix associated to $s_1$ in this basis of $V_\alpha$ is the diagonal matrix with $\xi$ in the first position and $1$'s elsewhere. For the other generators $s_k$, with $k > 1$
a straightforward computation shows that the associated matrix on the basis $\mathbf{t}_1, \mathbf{t}_2, \cdots , \mathbf{t}_n$ of $V_\alpha$ is the permutation matrix that interchanges the coordinates $k$ and $k-1$. 
One sees that these matrices are exactly the generators for the matrix presentation of $G(r,1,n)$, which proves that $V_\alpha = V_{\stn}$.
\end{proof}

\subsection{Induction and Restriction} In order to describe the McKay quiver $\Gamma(G(r,1,n),V_{\stn})$, one has to compute tensor products of the irreducible representations with $V_{\stn}$. We will give a combinatorial description of the arrows in the McKay quiver of $G(r,1,n)$ in terms of moving boxes of the $r$-tuples of Young diagrams that correspond to the irreducible representations of the group. Therefore we make use of combinatorial induction and restriction of representations and Ariki--Koike's branching rule.

\begin{defi} Let $\alpha=(\alpha^{(1)}, \ldots, \alpha^{(r)})$ be a representation of $G(r,1,n)$. We say that a cell in the Young diagram $\alpha^{(i)}$ is \emph{available}, if that cell could be deleted resulting in a legal Young diagram.  If $\beta$ is another representation of $G(r,1,m)$, with $m \leq n$, then $\beta \subset \alpha$ means that the $r-$tuple $\beta$ can be obtained from $\alpha$ by merely deleting cells from the Young diagrams $\alpha^{(i)}$. Moreover, we write $\alpha \setminus \beta$ for the  $r$-tuple of (skew) Young diagram resulting from the removal of all of the cells in $\beta$ from $\alpha$. Note that this is defined only if $\beta \subset \alpha$.
\end{defi}

 \begin{prop}[Branching rule, \cite{ariki1994hecke}, Cor.~3.12] \label{Prop:branching}
Let $\alpha$ be an irreducible representation of $G(r,1,n)$. The restriction functor from representations of $G(r,1,n)$ to representations of $G(r,1,n-1)$ is given by:
\[ V_\alpha = \bigoplus \limits_{\substack{\beta \subset \alpha \\ |\alpha \setminus \beta| =1}} V_\beta \textnormal{,}\]
where $\beta$ runs over all possible $r-$tuples obtained by deleting a single available cell of $\alpha$.
\end{prop}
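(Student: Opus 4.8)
The plan is to prove the branching rule directly from the explicit realization of $V_\alpha$ recalled in Section~\ref{Sub:Gr1nIrreps}: $V_\alpha$ has the set of $r$-tuples $\mathbf t$ of standard Young tableaux of shape $\alpha$ as a $K$-basis, and the generators $s_1,\dots,s_n$ of $G(r,1,n)$ act on this basis by the formulas displayed there. The one preliminary fact I would record is that the subgroup $G(r,1,n-1)\subseteq G(r,1,n)$---embedded as the matrices fixing the last coordinate---is generated by $s_1,\dots,s_{n-1}$: conjugating the diagonal generator $s_1$ by products of the transpositions $s_2,\dots,s_{n-1}$ moves the entry $\xi$ into any of the first $n-1$ diagonal positions, so these elements generate the diagonal part $\mu_r^{\,n-1}$ on the first $n-1$ coordinates, while $s_2,\dots,s_{n-1}$ generate the corresponding $S_{n-1}$.

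Next I would partition the basis of $V_\alpha$ according to which cell of $\alpha$ carries the label $n$. Since $n$ is the largest entry of a standard tableau, that cell has no cell to its right and none below it, hence is a corner of one of the components $\alpha^{(i)}$---an available cell in the sense of the definition just above. Writing $B_c$ for the set of standard tableaux of shape $\alpha$ with $n$ in the available cell $c$, and $W_c=\mathrm{span}_K B_c$, we obtain a vector-space decomposition $V_\alpha=\bigoplus_c W_c$. Now among $s_1,\dots,s_{n-1}$ the only generators that move any cell are $s_2,\dots,s_{n-1}$, and $s_k$ only ever exchanges the cells labelled $k-1$ and $k$; as $k\le n-1$ this never disturbs the cell labelled $n$. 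Hence each of these generators maps $B_c$ into $B_c\cup(-B_c)$, so every $W_c$ is a $G(r,1,n-1)$-submodule and $\Res V_\alpha=\bigoplus_c W_c$ as $G(r,1,n-1)$-modules.

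It remains to identify $W_c$ with $V_{\alpha\setminus c}$. Deleting the corner $c$ from the relevant component turns $\alpha$ into a legal $r$-tuple $\alpha\setminus c$ of total size $n-1$, and erasing the cell $c$ (which carries the label $n$) defines a bijection between $B_c$ and the set of standard tableaux of shape $\alpha\setminus c$, with inverse ``re-insert $n$ into the corner $c$''. Under this bijection the action of $s_2,\dots,s_{n-1}$ is literally unchanged, since those formulas refer only to the relative positions of the cells labelled $k-1$ and $k$, none of which is $c$, and the swap case transports to the analogous swap; and $s_1$ acts as multiplication by $\xi^{\,i-1}$ on both sides, where $i$ indexes the component containing the cell labelled $1$, which is unaffected by erasing $c$ (here we use $n\ge 2$, so that $1\ne n$). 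Hence $W_c\cong V_{\alpha\setminus c}$ as $G(r,1,n-1)$-modules, and therefore
\[
\Res V_\alpha \;=\; \bigoplus_{c}W_c \;\cong\; \bigoplus_{\substack{\beta\subset\alpha\\ |\alpha\setminus\beta|=1}}V_\beta ,
\]
the sum being over the available cells $c=\alpha\setminus\beta$; since distinct cells $c$ yield distinct $\alpha\setminus c$, this decomposition is moreover multiplicity-free.

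The only step that needs genuine care---and hence the main, rather modest, obstacle---is the middle one: checking that each $W_c$ is honestly invariant and, simultaneously, that the generator-action formulas transport correctly across the cell-deletion bijection, in particular that the ``$\mathbf t_q$ otherwise'' branch of the $s_k$-action again produces a standard tableau lying in $B_c$ and corresponding to the analogous swap in $\alpha\setminus c$; the rest is bookkeeping. A more structural alternative would be to apply Clifford theory to the abelian normal subgroup $\mu_r^{\,n}\trianglelefteq G(r,1,n)=\mu_r\wr S_n$ together with Mackey's double-coset formula, reducing the claim to the classical branching rule for the Young subgroups $S_{n_1}\times\cdots\times S_{n_r}$ recalled in Section~\ref{Sec:Sn}; but the tableau argument above is the shortest route, and it is the one given in \cite{ariki1994hecke}.
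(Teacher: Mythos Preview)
Your argument is correct. Note, however, that the paper does not supply its own proof of this proposition: it is stated with a citation to Ariki--Koike \cite[Cor.~3.12]{ariki1994hecke} and used as input. What you have written is precisely the standard tableau-basis argument behind that reference---partition the basis of $V_\alpha$ by the position of the cell labelled $n$, observe that $s_1,\dots,s_{n-1}$ preserve each block, and match each block with $V_{\alpha\setminus c}$ via the obvious bijection---so there is nothing to compare; you have filled in what the paper leaves to the literature, and done so accurately.
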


 Restriction does not change the dimension of a representation. Induction, on the other hand, will increases the dimension of a representation by a factor equal to the index of the subgroup. It is easily seen that induction $G(r,1,n-1)$ to $G(r,1,n)$ will multiply the dimension of a representation by $n\cdot r$, while tensoring with $V_{\stn}$ should only multiply the dimension by $n$ (the dimension of $V_{\stn}$). So we need a \emph{larger} subgroup to restrict to. We will see that $G(r,1,n-1) \times \mu_r \hookrightarrow G(r,1,n)$ is the right subgroup.
 
First we describe the group $G(r,1,n-1) \times \mu_r$ and its irreducible representations, where we assume that $\mu_r=\langle \xi \rangle$: an element in this group is of the form $(PD,\xi^i)$, where $PD$ is an element of $G(r,1,n-1)$ as described in Section \ref{Sub:Grpn}. The embedding $G(r,1,n-1) \times \mu_r \hookrightarrow G(r,1,n)$ is then given by
$$(PD, \xi^i) \mapsto \begin{pmatrix} P & 0 \\ 0 & 1 \end{pmatrix} \cdot \begin{pmatrix} D & 0 \\ 0 & \xi^i \end{pmatrix} \ .$$
The character table of $G(r,1,n-1) \times \mu_r$ is the Kronecker product of the respective character tables of $G(r,1,n-1)$ and $\mu_r$, see e.g., \cite[Theorem 19.18]{JamesLiebeck}. Thus it is easy to see that an irreducible representation of $G(r,1,n-1) \times \mu_r$ is a pair of an irreducible representation $\alpha$ of $G(r,1,n-1)$ and a power $\xi^i$ for $0 \leq i \leq r-1$. We denote these pairs by $\alpha  \boxtimes i$.

\begin{lemma} \label{lem:Gr1nxmu}
Given two irreducible representations, $\alpha \boxtimes i$ and $\beta \boxtimes j$ of $G(r,1,n-1)\times \mu_r$, their tensor product is 
\[ (\alpha \boxtimes i) \otimes (\beta \boxtimes j) = (\alpha \otimes \beta) \boxtimes (i \otimes j) = (\alpha \otimes \beta) \boxtimes k \textnormal{,}\]
where $k = i+j \mod r$.
\end{lemma}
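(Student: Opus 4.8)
The plan is to reduce the statement to the standard behaviour of (inner) tensor products of representations of a direct product of groups, together with the elementary observation that tensoring one-dimensional representations of the cyclic group $\mu_r$ corresponds to adding exponents modulo $r$.

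First I would spell out the actions explicitly. By construction an element $(PD,\xi^\ell)\in G(r,1,n-1)\times\mu_r$ acts on $\alpha\boxtimes i$ by the operator $\rho_\alpha(PD)\otimes \xi^{i\ell}$, that is, it acts on the $\alpha$-part through $\rho_\alpha(PD)$ and multiplies by the scalar $\xi^{i\ell}$; likewise $(PD,\xi^\ell)$ acts on $\beta\boxtimes j$ by $\rho_\beta(PD)\otimes\xi^{j\ell}$. Consequently, on the tensor product $(\alpha\boxtimes i)\otimes(\beta\boxtimes j)$ the element $(PD,\xi^\ell)$ acts by $(\rho_\alpha(PD)\otimes\xi^{i\ell})\otimes(\rho_\beta(PD)\otimes\xi^{j\ell})$. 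Permuting the tensor factors gives an isomorphism of $K$-vector spaces which is compatible with the group action and identifies this operator with $(\rho_\alpha(PD)\otimes\rho_\beta(PD))\otimes(\xi^{i\ell}\xi^{j\ell})$ acting on $(\alpha\otimes\beta)\otimes(i\otimes j)$, i.e.\ with the action on $(\alpha\otimes\beta)\boxtimes(i\otimes j)$.

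It then remains to identify $i\otimes j$ as a representation of $\mu_r$. Since $i$ and $j$ denote the one-dimensional representations $\xi\mapsto\xi^i$ and $\xi\mapsto\xi^j$, their tensor product is the one-dimensional representation $\xi\mapsto\xi^{i+j}$; because $\xi$ is a primitive $r$-th root of unity, $\xi^r=1$, so this representation is precisely the one indexed by $k=i+j\bmod r$, which proves the claim. Alternatively, one can run the entire argument on characters: $\chi_{\alpha\boxtimes i}(PD,\xi^\ell)=\chi_\alpha(PD)\,\xi^{i\ell}$ (the character table of the direct product being the Kronecker product of the two character tables, as already recalled before the lemma), whence the character of $(\alpha\boxtimes i)\otimes(\beta\boxtimes j)$ at $(PD,\xi^\ell)$ equals $\chi_\alpha(PD)\chi_\beta(PD)\,\xi^{(i+j)\ell}=\chi_{(\alpha\otimes\beta)\boxtimes k}(PD,\xi^\ell)$ with $k\equiv i+j\pmod r$, and since $K$ is algebraically closed the character determines the representation.

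There is no genuine obstacle in this argument; the only point requiring any care is the reduction of the $\mu_r$-exponent modulo $r$, which is immediate from $\xi^r=1$, and the (harmless) bookkeeping of the tensor-factor permutation intertwining the two group actions.
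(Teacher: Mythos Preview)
Your proposal is correct, and your alternative character computation is exactly the paper's proof: the paper simply says ``This is clear by looking at the character table of $G(r,1,n-1)\times\mu_r$.'' Your first, more explicit argument at the level of the group action is a finer-grained version of the same idea, spelling out the isomorphism that the character equality witnesses; it is not needed here but does no harm.
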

\begin{proof}
This is clear by looking at the character table of $G(r,1,n-1)\times\mu_r$.
\end{proof} 

Define $\nu: \Irrep(G(r,1,1)) \xrightarrow{} \NN$ via
$$\nu(\lambda)=\nu((-, \ldots, -, \underbrace{\ydiagram{1}}_{i},-, \ldots, -))=i \ ,$$
that is, $\nu$ returns the position of the single cell in the $r$-tuple $\lambda$.

\begin{proposition} \label{Prop:IndRes}
The restriction functor $\mathrm{Res}^{G(r,1,n)}_{G(r,1,n-1)\times \mu_r}$ is given by:
\[ \alpha \longmapsto \sum \limits_{\substack{\beta \subset \alpha \\ |\alpha \setminus \beta| =1}} \beta \boxtimes ( \nu(\alpha \setminus \beta)-1) \textnormal{,}\]
where $\beta$ runs over all possible ways of (legally) deleting a single cell from $\alpha$. 

The induction functor $\mathrm{Ind}^{G(r,1,n)}_{G(r,1,n-1)\times \mu_r}$ for $\beta=(\beta^{(1)}, \ldots, \beta^{(i)}, \ldots, \beta^{(r)})$ is given by
\[ \beta \boxtimes i \longmapsto \bigoplus_{j}(\beta^{(1)}, \ldots,\beta^{(i)}, \beta^{(i+1)} \cup \{\ydiagram{1}\}, \beta^{(i+2)}, \ldots, \beta^{(r)}) \ , \]
where $\beta^{(i+1)}\cup \{\ydiagram{1}\}$ is a legal Young diagram of size $n_{i+1}+1$ obtained by adding a cell to $\beta^{(i+1)}$ and $j$ runs over all legal Young diagrams that can be obtained this way. Note that we consider $i=0, \ldots, r-1$.

\end{proposition}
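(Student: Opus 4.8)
The plan is to read the restriction formula off the Ariki--Koike branching rule (Prop.~\ref{Prop:branching}) while keeping track of the additional $\mu_r$-action, and then to deduce the induction formula from it by Frobenius reciprocity (Thm.~\ref{Thm:Frobenius}).

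\emph{Restriction.} The first task is to identify the embedded copy of $\mu_r$ on the level of tableaux. From the matrix presentation of the generators used in the two preceding lemmas ($s_1=\mathrm{diag}(\xi,1,\dots,1)$, and for $k>1$ the matrix $s_k$ transposing coordinates $k-1,k$), an easy induction shows that $t_k=s_k t_{k-1}s_k$ is the diagonal matrix with $\xi$ in position $k$ and $1$'s elsewhere; hence the generator $(1,\xi)$ of $G(r,1,n-1)\times\mu_r$ is carried by the embedding exactly to $t_n$, and this element commutes with $s_1,\dots,s_{n-1}$. Next, since $s_1,\dots,s_{n-1}$ never move the entry $n$ in a tableau, the branching decomposition $\mathrm{Res}^{G(r,1,n)}_{G(r,1,n-1)}V_\alpha=\bigoplus_\beta V_\beta$ is realized concretely: the summand $V_\beta$ is the span of those standard $r$-tableaux of shape $\alpha$ whose cell labelled $n$ is the corner cell $\alpha\setminus\beta$. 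By the formula for the $t_n$-action recalled before the lemmas (\cite[Prop.~3.16]{ariki1994hecke}), $t_n$ multiplies every such tableau by $\xi^{\,\nu(\alpha\setminus\beta)-1}$, the cell labelled $n$ lying in component $\nu(\alpha\setminus\beta)$ of the $r$-tuple; so $t_n$ acts on the whole summand by that scalar. Therefore, as a module over $G(r,1,n-1)\times\mu_r$, this summand is $V_\beta\boxtimes(\nu(\alpha\setminus\beta)-1)$, which is the asserted decomposition.

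\emph{Induction.} I would compute the multiplicity of $\mathrm{Ind}^{G(r,1,n)}_{G(r,1,n-1)\times\mu_r}(\beta\boxtimes i)$ in each irreducible $V_\gamma$ of $G(r,1,n)$, where $\gamma$ is an $r$-tuple of total size $n$. By Frobenius reciprocity and the restriction formula just proved,
$$\big\langle\mathrm{Ind}^{G(r,1,n)}_{G(r,1,n-1)\times\mu_r}(\beta\boxtimes i),\ \gamma\big\rangle=\Big\langle\beta\boxtimes i,\ \textstyle\bigoplus_{\substack{\delta\subset\gamma\\|\gamma\setminus\delta|=1}}\delta\boxtimes(\nu(\gamma\setminus\delta)-1)\Big\rangle,$$
and as the $\delta\boxtimes j$ are pairwise non-isomorphic irreducibles of the product group, the right-hand side counts the $\delta$ with $\delta=\beta$, $|\gamma\setminus\beta|=1$ and $\nu(\gamma\setminus\beta)=i+1$. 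Such a $\delta$ is unique if it exists, and it exists precisely when $\gamma$ is obtained from $\beta$ by adjoining a single legal cell to the component $\beta^{(i+1)}$; in that case the multiplicity equals $1$. Hence $\mathrm{Ind}(\beta\boxtimes i)$ is the multiplicity-free sum of the $V_\gamma$ over exactly those $\gamma$, which is the claimed formula. The dimensions are consistent, since $G(r,1,n-1)\times\mu_r$ has index $n$ in $G(r,1,n)$, so $\dim\mathrm{Ind}(\beta\boxtimes i)=n\dim V_\beta$.

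The only delicate point is the bookkeeping in the restriction step: confirming that the image of $\mu_r$ is genuinely $\langle t_n\rangle$ (rather than some conjugate), and matching the component index $\nu(\alpha\setminus\beta)\in\{1,\dots,r\}$ with the twist $i\in\{0,\dots,r-1\}$, i.e.\ getting the ``$-1$'' shift right. Once that is settled, both halves are formal consequences of the branching rule and Frobenius reciprocity.
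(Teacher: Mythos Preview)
Your proof is correct and follows essentially the same approach as the paper: for restriction you use the Ariki--Koike branching rule together with the $t_n$-action on tableaux to identify the $\mu_r$-twist, exactly as the paper does (though you are more explicit in verifying that the embedded generator of $\mu_r$ really is $t_n$). For induction the paper merely says ``also follows from the definition,'' whereas you give a clean Frobenius reciprocity argument; this is a minor elaboration rather than a different route.
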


\begin{proof}
The restriction $\beta$ from $\alpha$ follows from the branching rule Prop.~\ref{Prop:branching}, so we only need to understand the integer $\nu(\alpha \setminus \beta)-1$. 

Recall the vectors $\mathbf{t}$, the young tableaux of shape $\alpha$ which span $V_\alpha$. When we are deleting a cell from $\alpha$ we are explicitly deleting the cell from $\mathbf t$ which is labelled with $n$. Recall the action $t_n$ from the presentation of $G(r,1,n)$ (see Section \ref{Sub:Gr1nIrreps}):
\[ t_n \cdot \mathbf t \longmapsto \xi^{i-1} \mathbf t \textnormal , \]
where $n$ appears in the $i^{th}$ component of $\mathbf t$. This shows the correspondence.
The induction rule also follows from the definition. 
\end{proof}

What we have shown is that the restriction to this subgroup consists of the deletion of an available cell together with an integer which serves to remember from whence we deleted the cell.  Furthermore, induction from this subgroup consists of the sum of all of the (legal) ways to add a cell to the component of the $r$-tuple corresponding to that power of $\xi$.

\begin{example}
Given the representation $\alpha = \left( \ydiagram{2,1} , \ydiagram{2,1,1}, \ydiagram{1} \right)$ in $G(3,1,8)$ we have: 
\begin{align*}
\mathrm{Res}^{G(3,1,8)}_{G(3,1,7)\times \mu_3} \left(\ydiagram{2,1} , \ydiagram{2,1,1}, \ydiagram{1} \right)= &\left(\left( \ydiagram{1,1} , \ydiagram{2,1,1}, \ydiagram{1}\right)\boxtimes 0 \right)
	\oplus \left( \left( \ydiagram{2} , \ydiagram{2,1,1}, \ydiagram{1} \right)\boxtimes 0 \right)
	\oplus \left( \left(\ydiagram{2,1} , \ydiagram{2,1}, \ydiagram{1} \right)\boxtimes 1 \right) \\
	 & \oplus \left( \left( \ydiagram{2,1} , \ydiagram{1,1,1}, \ydiagram{1} \right)\boxtimes 1 \right)
	\oplus \left( \left( \ydiagram{2,1} , \ydiagram{2,1,1}, - \right)\boxtimes 2 \right)
\end{align*}
\end{example}

\begin{example}
Taking $\left( \ydiagram{2,1} , \ydiagram{2,1}, \ydiagram{1} \right)\boxtimes 1$ and $\left(\ydiagram{2,1} , \ydiagram{2,1,1}, - \right)\boxtimes 2 $ from the previous example we have that:
\[ \mathrm{Ind}^{G(3,1,8)}_{G(3,1,7)\times \mu_3}\left(\left( \ydiagram{2,1} , \ydiagram{2,1}, \ydiagram{1} \right)\boxtimes 1 \right) =\left( \ydiagram{2,1} , \ydiagram{2,1,1}, \ydiagram{1} \right)\oplus \left( \ydiagram{2,1} , \ydiagram{2,2}, \ydiagram{1} \right) \oplus \left( \ydiagram{2,1} , \ydiagram{3,1}, \ydiagram{1} \right) \textnormal ,\]
since there are three legal ways of adding a cell to the second component. On the other hand,
\[\mathrm{Ind}^{G(3,1,8)}_{G(3,1,7)\times \mu_3}\left( \left( \ydiagram{2,1} , \ydiagram{2,1,1}, - \right)\boxtimes 2 \right) = \left( \ydiagram{2,1} , \ydiagram{2,1,1}, \ydiagram{1} \right) \textnormal ,\]
since there is only one way to add a cell to the third component.
\end{example}

\subsection{The quiver} We are ready to describe the McKay quiver of $G(r,1,n)$. 
Note that by classical results of Burnside--Brauer \cite{Brauer} $\Gamma(G(r,1,n), \stn)$ is a connected graph (since $V_{\stn}$ is faithful).

\begin{Thm} \label{thm:ML} 
  Let $\Gamma=\Gamma(G(r,1,n),\stn)$ be the McKay graph of $G(r,1,n)$. Then the vertices of $\Gamma$ correspond to $r$-tuples $\lambda=(\lambda^{(1)}, \ldots, \lambda^{(r)})$ of Young diagrams of size $0 \leq n_i \leq n$, such that $\sum_{i=1}^rn_i=n$.
  
There is an arrow from a vertex $\alpha$ to  vertex $\beta$ in $\Gamma$ if and only if the $r-$tuple of Young diagrams $\beta$ can be obtained from $\alpha$ by deleting a cell from position $i$ in $\alpha$ and then adding a cell to position $i+1 \mod r$.
\end{Thm}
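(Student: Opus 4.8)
The plan is to follow the template of the proof of Theorem~\ref{McKayquiverSn} for $S_n$, now using the subgroup $\sfH=G(r,1,n-1)\times\mu_r$ of $\sfG=G(r,1,n)$ together with Frobenius reciprocity, with Proposition~\ref{Prop:IndRes} and Lemma~\ref{lem:Gr1nxmu} playing the role of the two restriction/induction lemmas used there. First I would record that the vertices of $\Gamma$ are exactly the $r$-tuples of Young diagrams of total size $n$ by the Ariki--Koike classification recalled in Section~\ref{Sub:Gr1nIrreps}, and that $\Gamma$ is connected since $V_{\stn}$ is faithful (Burnside--Brauer). Everything then reduces to computing, for two vertices $\alpha,\beta$, the number $\dim_K\Hom_{K\sfG}(V_\alpha\otimes V_{\stn},V_\beta)$ of arrows from $\alpha$ to $\beta$.

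The crux is the clean identity $\Ind^{\sfG}_{\sfH}(\tau_{n-1}\boxtimes 1)=V_{\stn}$, where $\tau_{n-1}$ denotes the trivial representation of $G(r,1,n-1)$, i.e.\ the $r$-tuple with a single row of $n-1$ boxes in the first component and empty components elsewhere. This follows at once from the induction formula of Proposition~\ref{Prop:IndRes}: with $i=1$ one adds a box to the (empty) second component, and there is exactly one legal way to do so, producing precisely the $r$-tuple describing $V_{\stn}$ from the earlier lemma. (Note this is even cleaner than the $S_n$ case, where the induced module carried an extra trivial summand; here there is no extra term, so no separate treatment of loops $\alpha=\beta$ is required.) Combining this identity with the tensor formula $W\otimes\Ind^{\sfG}_{\sfH}(W')\cong\Ind^{\sfG}_{\sfH}(\Res^{\sfG}_{\sfH}W\otimes W')$ from Theorem~\ref{Thm:Frobenius} and then the adjunction (Frobenius reciprocity) yields
\[
\Hom_{K\sfG}(V_\alpha\otimes V_{\stn},V_\beta)\;\cong\;\Hom_{K\sfH}\!\bigl(\Res^{\sfG}_{\sfH}V_\alpha\otimes(\tau_{n-1}\boxtimes 1),\ \Res^{\sfG}_{\sfH}V_\beta\bigr).
\]

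Next I would expand both restrictions by Proposition~\ref{Prop:IndRes}: $\Res^{\sfG}_{\sfH}V_\alpha=\bigoplus_\gamma \gamma\boxtimes(\nu(\alpha\setminus\gamma)-1)$, the sum over all $\gamma$ obtained by deleting a single available cell from $\alpha$; tensoring with $\tau_{n-1}\boxtimes 1$ leaves the partition $\gamma$ unchanged and shifts the $\mu_r$-label by $1$ by Lemma~\ref{lem:Gr1nxmu}, giving $\bigoplus_\gamma \gamma\boxtimes(\nu(\alpha\setminus\gamma)\bmod r)$; and likewise $\Res^{\sfG}_{\sfH}V_\beta=\bigoplus_\delta \delta\boxtimes(\nu(\beta\setminus\delta)-1)$. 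Since $K$ is algebraically closed and all summands appearing are irreducible $\sfH$-modules, the Hom space on the right has dimension equal to the number of pairs $(\gamma,\delta)$ with $\gamma\cong\delta$ and $\nu(\alpha\setminus\gamma)\equiv\nu(\beta\setminus\delta)-1\pmod r$. Setting $\gamma=\delta$, $a=\nu(\alpha\setminus\gamma)$, $b=\nu(\beta\setminus\gamma)$ so that $b\equiv a+1\pmod r$, such a $\gamma$ is obtained from $\alpha$ by deleting an available cell in position $a$, and $\beta$ is obtained from $\gamma$ by adding a cell in position $a+1\bmod r$; conversely every such $\gamma$ produces one such pair. Hence there is an arrow from $\alpha$ to $\beta$ iff $\beta$ arises from $\alpha$ by deleting a cell in some position $i$ and then adding a cell in position $i+1\bmod r$, which is the claim, and the count of intermediate $\gamma$'s gives the exact arrow multiplicity.

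The main obstacle is purely bookkeeping with the indices: one must track how the three shifts interact --- the built-in $-1$ coming from $\nu$ under restriction, the $+1$ from tensoring with $\tau_{n-1}\boxtimes 1$, and the reduction modulo $r$ --- and check that they compose to give exactly ``delete from position $i$, add to position $i+1\bmod r$'' with the correct orientation of the arrow, rather than an off-by-one or reversed variant. I would guard against this by fixing the paper's convention that $m_{ij}$ counts the multiplicity of $V_j$ in $V_i\otimes V_{\stn}$, and by double-checking the identity $\Ind^{\sfG}_{\sfH}(\tau_{n-1}\boxtimes 1)=V_{\stn}$ directly against the explicit $r$-tuple for $V_{\stn}$ from the earlier lemma.
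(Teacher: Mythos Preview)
Your proof is correct and follows essentially the same approach as the paper: both hinge on the key identity $\Ind^{\sfG}_{\sfH}(\tau_{n-1}\boxtimes 1)=V_{\stn}$ and the tensor formula $V_\alpha\otimes\Ind^{\sfG}_{\sfH}W\cong\Ind^{\sfG}_{\sfH}(\Res^{\sfG}_{\sfH}V_\alpha\otimes W)$, together with Proposition~\ref{Prop:IndRes} to interpret restriction and induction as deleting and adding cells with a positional tag. The only cosmetic difference is that the paper reads off the full decomposition of $V_\alpha\otimes V_{\stn}=\Ind^{\sfG}_{\sfH}(\Res^{\sfG}_{\sfH}V_\alpha\otimes W)$ directly, whereas you apply Frobenius reciprocity once more to compare against $\Res^{\sfG}_{\sfH}V_\beta$ as a Hom over $\sfH$; this extra step is harmless and in fact makes the arrow multiplicity count more explicit.
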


\begin{proof}
  The irreducible representations of $G(r,1,n)$ were described in Section \ref{Sub:Gr1nIrreps}. Let $W = \left( \ydiagram{2} \cdots \ydiagram{1}, -, \cdots ,-\right)\boxtimes1 $ be a representation of $\sfH = G(r,1,n-1)\times \mu_r$ and $V_\alpha$ be an irreducible representation of $\sfG = G(r,1,n)$. Notice that the Young diagram part of $W$ is associated to the trivial representation of $G(r,1,n-1)$.
  
Now it follows that (apply Frobenius reciprocity, Theorem \ref{Thm:Frobenius})\begin{align*}
\mathrm{Ind}^\sfG_{\sfH} \left\{ \mathrm{Res}^\sfG_{\sfH}V_\alpha\otimes W\right\} & =V_\alpha \otimes \mathrm{Ind}^\sfG_\sfH  W 
=V_\alpha \otimes \left(\ydiagram{2}\cdots \ydiagram{1},\ydiagram{1},-, \cdots ,-\right)\\
& = V_\alpha \otimes V_{\stn} \textnormal \ .
\end{align*}
We know from Prop.~\ref{Prop:IndRes} that $\mathrm{Res}^\sfG_{\sfH}V_\alpha$ can be understood as a direct sum of $r$-tuples of Young diagrams, each obtained by deleting exactly one cell from $V_\alpha$ together with a power of $\xi^i$, that records from which component the cell was deleted.

$\mathrm{Res}^\sfG_\sfH V_\alpha\otimes W$ can then be understood by fixing the Young diagrams in $\mathrm{Res}^\sfG_\sfH V_\alpha$ (since the Young diagram in $W$ corresponds to the trivial representation) and increasing the power of $\xi$ in each summand of $\mathrm{Res}^\sfG_\sfH V_\alpha$  by $1$ (modulo $r$).

Inducing on the resulting sum will mean adding a cell in each summand to the component exactly one step to the right (cyclically) of the component from which the cell was originally deleted. 

This is exactly the statement of the theorem.
\end{proof}

We illustrate the theorem with an example.
\begin{example}
 Consider the irreducible representation $\alpha = \left( \ydiagram{3,1}, \ydiagram{1}, -\right)$ of $G(3,1,5)$. The goal is to find all  the targets of arrows with this vertex as a source. Note that $V_{\stn}$ corresponds to $\left( \ydiagram{4}, \ydiagram{1}, - \right)$ in this case.
Letting $\sfG= G(3,1,5)$ and $\sfH = G(3,1,4) \times \mu_3$, we have that:
\begin{align*}
\mathrm{Res}^\sfG_\sfH V_\alpha = \left(\left( \ydiagram{3}, \ydiagram{1}, -\right)\boxtimes 0\right)
	\oplus \left(\left( \ydiagram{2,1}, \ydiagram{1}, -\right)\boxtimes 0\right)
	\oplus \left(\left( \ydiagram{3,1}, -, -\right)\boxtimes 1\right).
\end{align*}
Now, the tensor product with $\left( \left( \ydiagram{4}, -, -\right)\boxtimes 1\right)$ yields:
\begin{align*}
\mathrm{Res}^\sfG_\sfH V_\alpha \otimes\left( \left( \ydiagram{4}, -, -\right)\boxtimes 1\right) = \left(\left( \ydiagram{3}, \ydiagram{1}, -\right)\boxtimes 1\right)
	\oplus \left(\left( \ydiagram{2,1}, \ydiagram{1}, -\right)\boxtimes 1\right)
	\oplus \left(\left( \ydiagram{3,1}, -, -\right)\boxtimes 2\right).
\end{align*}
Then induction yields:
\begin{align*}
\mathrm{Ind}^\sfG_\sfH \left\{\mathrm{Res}^\sfG_\sfH V_\alpha \otimes\left( \left( \ydiagram{4}, -, -\right)\boxtimes 1\right)\right\} =& \left( \ydiagram{3,1}, \ydiagram{1}, -\right) \otimes \left( \ydiagram{4}, \ydiagram{1},-\right) \\
	 = &\left( \ydiagram{3}, \ydiagram{2}, -\right) \oplus \left( \ydiagram{3}, \ydiagram{1,1}, -\right)
	\oplus \left( \ydiagram{2,1}, \ydiagram{2}, -\right)\oplus \\
 \phantom{=} &	 \left( \ydiagram{2,1}, \ydiagram{1,1}, -\right)
	\oplus \left( \ydiagram{3,1}, -, \ydiagram{1}\right).
\end{align*}
In this example, we can clearly see the available cells are each moving exactly one component to the right and being placed in a way that creates a legal Young diagram.
\end{example}

\begin{corollary} \label{Cor:nodoublearrows}
The McKay quiver $\Gamma(G(r,1,n),\stn)$ has arrows in both directions if and only if $r=2$, that is, only if $G(r,1,n)$ is a Coxeter group. Furthermore, for vertices $\alpha \neq \beta$ there is at most one arrow from $\alpha$ to $\beta$. Moreover, there are loops in $G(r,1,n)$ if and only if $r=1$.
\end{corollary}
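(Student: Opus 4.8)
The plan is to read off all three assertions directly from the box-moving description of $\Gamma(G(r,1,n),\stn)$ given in Theorem \ref{thm:ML}: an arrow $\alpha \to \beta$ exists precisely when $\beta$ is obtained from $\alpha$ by removing an available cell from component $i$ and adding a cell to component $i+1 \bmod r$. For the claim about loops, I would note that an arrow $\alpha \to \alpha$ requires $i \equiv i+1 \bmod r$, i.e.\ $r=1$; and conversely when $r=1$ the group is $S_n$, for which Theorem \ref{McKayquiverSn} already tells us there are loops (indeed $p(\lambda)-1$ of them, which is positive for any $\lambda$ with at least two distinct parts, e.g.\ $\lambda=(n-1,1)$ for $n\geq 3$). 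So I would state the $r=1$ case as an immediate consequence of the two theorems.

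For arrows in both directions: if $\alpha \to \beta$ exists, a cell moved from component $i$ to component $i+1$, so $|\alpha^{(i)}| = |\beta^{(i)}|+1$ and $|\alpha^{(i+1)}| = |\beta^{(i+1)}|-1$ while all other components have equal size. An arrow $\beta \to \alpha$ would require moving a cell from some component $j$ to component $j+1 \bmod r$; matching the size changes forces $\{j, j+1\} = \{i, i+1\}$ as the two components whose sizes change, and moreover the cell must go \emph{from} the component that is one larger in $\beta$ (namely $i+1$) \emph{to} the one that is one smaller (namely $i$). So we need $i+1 \to i$ to be a legal step, i.e.\ $i+1+1 \equiv i \bmod r$, which gives $r \mid 2$, i.e.\ $r=2$ (since $r=1$ is the loop case, already handled, and for $r=1$ everything is a loop anyway). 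Conversely, for $r=2$ the map $i \mapsto i+1 \bmod 2$ swaps the two components, so whenever $\alpha \to \beta$ is realized by moving a cell from component $1$ to component $2$, one checks the reverse move (component $2$ to component $1$) recreates $\alpha$ and is legal because adding back the removed cell to $\alpha^{(1)}$ certainly yields a legal diagram — so $\beta \to \alpha$ exists. I would also recall $G(2,1,n) = B_n$ is a Coxeter group to match the phrasing.

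For the "at most one arrow from $\alpha$ to $\beta$ when $\alpha\neq\beta$" claim: the multiplicity of the arrow $\alpha\to\beta$ counts the number of ways to delete an available cell of $\alpha$ from some component $i$ and add one to component $i+1\bmod r$ so as to land on $\beta$. But $\alpha$ and $\beta$ differ in exactly two components (or the move is trivial, giving $\alpha=\beta$, excluded), and those two components pin down $i$ and $i+1$ uniquely, hence pin down which component loses a cell and which gains one; then the cell removed from $\alpha^{(i)}$ is forced to be the unique cell in $\alpha^{(i)}\setminus\beta^{(i)}$, and likewise the added cell is $\beta^{(i+1)}\setminus\alpha^{(i+1)}$. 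So there is at most one such realization, giving multiplicity $\leq 1$. The only subtlety here is the wrap-around when $r=2$: then $i$ and $i+1\bmod 2 = i-1\bmod 2$ are the same unordered pair regardless of which of the two components we call $i$, but the size constraint (the source component must be the one that shrinks, i.e.\ is larger in $\alpha$) still determines the direction of the move uniquely, so multiplicity $1$ persists.

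\medskip

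The main obstacle I anticipate is purely bookkeeping: being careful with the cyclic arithmetic $i \mapsto i+1 \bmod r$ when $r=2$, where "one step right" and "one step left" coincide, so that the argument for "both directions" does not accidentally prove too much or too little, and likewise checking that legality of the box moves is automatic in each reverse-direction construction (adding a previously-removed cell back to a Young diagram is always legal, which is the only legality check one actually needs).
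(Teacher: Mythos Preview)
Your proposal is correct and takes essentially the same approach as the paper: both read the three claims directly off the box-moving description of Theorem~\ref{thm:ML}, with Theorem~\ref{McKayquiverSn} supplying the existence of loops when $r=1$. The paper's proof is in fact much terser than yours (it says little more than ``follows directly'' for the first two claims and gives only the size-change argument for the absence of loops when $r\geq 2$), so your detailed size-counting for the bidirectional and multiplicity statements is exactly the kind of verification the paper leaves to the reader; one small slip to fix is the parenthetical ``for $r=1$ everything is a loop anyway,'' which is false (the $S_n$ quiver has plenty of non-loop arrows), though this does not affect your argument.
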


\begin{proof}
This follows directly from the description of the arrows in the McKay quiver of Theorem \ref{thm:ML}. The existence of loops in $\Gamma(G(1,1,n))$ follows from Theorem \ref{McKayquiverSn}. If $r \geq 2$, then the description of arrows in Theorem \ref{thm:ML} shows that there cannot be any loops, since deleting a cell in component $i$ and adding it to component $i+1$ modulo $r$ of any $\lambda$ will change the shape of $\lambda$.
\end{proof}

\section{McKay quivers of $G(r,p,n)$}  \label{Sec:Grpn}

\subsection{Representations of $G(r,p,n)$} \label{Sub:IrrepGrpn} 
The irreducible representations of $G(r,p,n)$ have been described by Stembridge \cite{stembridge1989eigenvalues}, using Clifford theory. We will mostly follow the notation of \cite{bagno2007colored} (also cf.~\cite{MoritaYamada}), expressing the representations in terms of Young diagrams. The rough idea is that each irreducible representation of $G(r,p,n)$ can be understood as the quotient of an irreducible representation of $G(r,1,n)$ by the actions of  linear representations $\delta_\ell$ of $G(r,1,n)$ that land in the cokernel $\mu_p$ of the group homomorphism $G(r,p,n) \xrightarrow{} G(r,1,n)$, cf.~Lemma \ref{Lem:GrpnNormal}. We first determine this action:
Let $ 0\leq \ell < r-1$ and $\delta_\ell$ be the one-dimensional representation of $G(r,1,n)$ corresponding to the Young diagram:
\[ \delta_\ell = \left( -,-, \cdots ,-, \ydiagram{2} \cdots \ydiagram{1}, -, \cdots, - \right)  ,\]
where the (non-empty) trivial Young diagram appears in the $(\ell+1)^{th}$ position of the $r-$tuple.
Note that $\delta_0$ is the trivial representation of $G(r,1,n)$.

\begin{lemma} \label{Lem:shift}
The linear representations $\delta_\ell$ of $G(r,1,n)$, described above, act on the irreducible representations $\lambda$ of $G(r,1,n)$ by cyclically moving each diagram in the $r$-tuple $\ell$ steps to the right. Explicitly: for $\lambda=(\lambda^{(1)}, \ldots, \lambda^{(r)})$ one has that 
$$\lambda \otimes \delta_\ell=(\lambda^{(1-\ell)}, \lambda^{(2-\ell)} \ldots, \lambda^{(r-\ell)}) \ , $$
where $i-\ell$ is considered modulo $r$.
\end{lemma}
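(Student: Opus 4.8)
The plan is to reduce the claim to a character computation, using the explicit action of the generators $s_k$ and the auxiliary elements $t_k$ of $G(r,1,n)$ recalled in Section \ref{Sub:Gr1nIrreps}. First I would record the one-dimensional representation $\delta_\ell$ concretely on generators: since $\delta_\ell$ corresponds to the $r$-tuple with the (non-empty) trivial diagram in position $\ell+1$, the single standard tableau has all of $1,\dots,n$ in that one row, so by Ariki--Koike's formula $s_1$ acts by $\xi^{\ell}$ and $s_k$ acts by $1$ for $k>1$; equivalently, $t_k$ acts by $\xi^{\ell}$ for every $k$. Thus on an element $PD \in G(r,1,n)$ the character of $\delta_\ell$ is $\det(D)^{\ell}$ (the product of the diagonal entries of $D$, raised to the $\ell$-th power), independent of the permutation part $P$.

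Next I would make the shift operation on $r$-tuples precise. Given $\lambda = (\lambda^{(1)},\dots,\lambda^{(r)})$, write $\lambda' = (\lambda^{(1-\ell)},\dots,\lambda^{(r-\ell)})$ with indices mod $r$. There is an obvious vector-space isomorphism $V_{\lambda} \to V_{\lambda'}$ sending an $r$-tuple of standard tableaux $\mathbf{t}$ to the same tableaux cyclically relabelled in position. The key step is to check that this isomorphism intertwines the $G(r,1,n)$-action on $V_{\lambda}\otimes\delta_\ell$ with the action on $V_{\lambda'}$. For $s_k$ with $k>1$ this is immediate, since these generators act on $\mathbf t$ only through the positions of the entries $k-1,k$ within a fixed diagram (same row / same column / otherwise), and cyclically permuting the components does not change that data, while $\delta_\ell$ contributes the scalar $1$. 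For $s_1$: if the digit $1$ sits in component $i$ of $\mathbf t$, then $s_1 \mathbf t = \xi^{i-1}\mathbf t$; after the shift the digit $1$ sits in component $i+\ell \bmod r$, so $s_1$ acts on the shifted tableau by $\xi^{i+\ell-1}$, which is exactly $\xi^{i-1}$ (the action on $V_\lambda$) times $\xi^{\ell}$ (the action of $\delta_\ell$). Hence the two $K G(r,1,n)$-modules are isomorphic.

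Alternatively — and this is probably the cleanest write-up — one can avoid tracking the isomorphism explicitly and argue purely with characters: both $V_\lambda\otimes\delta_\ell$ and $V_{\lambda'}$ are irreducible (tensoring an irreducible by a one-dimensional representation is irreducible, and $\lambda'$ is a genuine $r$-tuple indexing an irreducible of $G(r,1,n)$), so it suffices to show they have equal characters, and by semisimplicity it is enough to evaluate on the commuting family $t_1,\dots,t_n$ together with a set of coset representatives for the $S_n$-part, or more simply to compare the joint eigenvalues of the $t_k$. On $V_\lambda$ the element $t_k$ acts on $\mathbf t$ by $\xi^{i-1}$ where $k$ occupies component $i$; twisting by $\delta_\ell$ multiplies this by $\xi^{\ell}$, giving $\xi^{i+\ell-1}$, which is precisely the eigenvalue of $t_k$ on the corresponding basis vector of $V_{\lambda'}$, where $k$ now sits in component $i+\ell$. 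Since the $s_k$ ($k>1$) generate the $S_n$-part and act identically (up to the trivial scalar from $\delta_\ell$) on matching basis vectors of the two modules, the characters agree and the lemma follows. The only mild subtlety — the "hard part," such as it is — is bookkeeping the mod-$r$ indexing and the off-by-one between "position $i$" and the exponent $i-1$ consistently; there is no real obstacle beyond that.
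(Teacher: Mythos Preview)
Your proposal is correct and follows exactly the approach the paper sketches: the paper's proof says only ``analyze how the group generators act on $\lambda$ and $\lambda\otimes\delta_\ell$ in terms of Young tableaux, as in Section~\ref{Sub:Gr1nIrreps},'' and you have filled in precisely those details, checking $s_1$ via the scalar $\xi^{i-1}\cdot\xi^\ell=\xi^{(i+\ell)-1}$ and the $s_k$ for $k>1$ via the unchanged row/column data. One small comment: your ``alternative'' character argument is not really a separate argument---comparing joint eigenvalues of the $t_k$ alone does not determine the isomorphism class, and you end up invoking the action of the $s_k$ on matched basis vectors anyway, which is just your first argument again; I would drop the alternative and keep the direct intertwiner check.
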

\begin{proof}
This can be proven by analyzing how the group generators act on the representation $\lambda$ and $\lambda \otimes \delta_\ell$: one can explicitly determine this action in terms of Young tableaux, as in Section \ref{Sub:Gr1nIrreps}. We leave the straightforward proof to the reader.
\end{proof}

\begin{example}
Let $\alpha = \left( \ydiagram{2,1}, -, \ydiagram{1,1,1}, \ydiagram{1}\right)$ be a representation of $G(4,1,7)$ then the representation $\delta_2$ are of the form:
\[ \delta_2 = \left( -, -,\ydiagram{7},-\right) \textnormal . \]  
and we have that:
\[ \alpha \otimes \delta_2 = \left( \ydiagram{1,1,1}, \ydiagram{1}, \ydiagram{2,1}, -\right) \textnormal .\]
\end{example}

 The group $G(r,p,n)$ for $p >1$ is a subgroup of $G(r,1,n)$. Note that we must have $p|r$. In the following we denote $\sfG:=G(r,1,n)$ and $\sfH:=G(r,p,n)$ and further set $d:=\frac{r}{p}$.

The quotient $\sfG/\sfH$ is isomorphic to the cyclic group $\mu_p=\langle \delta_1^d \rangle$, with $\delta_1$ as defined above. Note that $\delta_1$ is the linear character of $G(r,1,n)$ with $\sfH \subseteq \ker(\delta_1)$ (see \cite[Section 5]{bagno2007colored}). Then $\sfG/\sfH$ acts on the irreducible representations $\lambda$ of $G(r,1,n)$ by $\lambda \mapsto \delta \otimes \lambda$, where $\lambda=(\lambda^{(1)}, \ldots, \lambda^{(r)})$ and $\delta$ is a 1-dimensional representation, as described in Lemma \ref{Lem:shift}. For this, one defines the \emph{shift-operator}
$$\lambda^{\shift{1}}=(\lambda^{(r)},  \lambda^{(1)},\ldots, \lambda^{(r-1)}) \ .$$
We denote by $\lambda^{\shift{i}}$ the irreducible representation of $G(r,1,n)$ we get from applying $i$-times the shift-operator. Then (see e.g., \cite[Section 4]{MoritaYamada}) $\delta_\ell \otimes \lambda = \lambda^{\shift{\ell}}$.

Denote by $[\lambda]$ the $\sfG/\sfH$-orbit of the irreducible $\sfG$-representation $\lambda$. Further define an equivalence relation on $\sfG$-representations via: 
$$\lambda \simeq \mu \textrm{ if }  \mu=\lambda^{\shift{i \cdot d}} \textrm{ for some } i \in \{0, \ldots, p-1\} \ .$$
Then one easily sees that $[\lambda]=\{ \mu : \lambda \simeq \mu \}$. Let $b(\lambda)$ be the cardinality of the $\sfG/\sfH$-orbit of $\lambda$ and $u(\lambda)=\frac{p}{b(\lambda)}$. Furthermore, denote the \emph{stabilizer of $\lambda$}
$$(\sfG/\sfH)_\lambda : =\{ \delta \in \sfG/\sfH: \lambda = \delta \otimes \lambda \} \ . $$
This stabilizer is a subgroup of $\sfG/\sfH$ generated by $\delta_1^{b(\lambda) \cdot d}$ of order $u(\lambda)$. Using Clifford theory, one can prove that the restriction of the irreducible representation $\lambda$ of $\sfG$ restricts to $u(\lambda)=|(\sfG/\sfH)_\lambda|$ irreducible $\sfH$-representations. More precisely, the irreducible representations of $\sfH$ can be classified (originally due to Stembridge):

\begin{prop}(cf.~Stembridge \cite[Prop.~6.1]{stembridge1989eigenvalues}, \cite[Thm.~5.1]{bagno2007colored}) \label{Prop:Stembridge} There is a 1-1 correspondence between the irreducible representations of $\sfH=G(r,p,n)$ and ordered pairs $([\lambda], \delta)$ where $[\lambda]$ is the orbit of the irreducible representation $\lambda$ of $\sfG=G(r,1,n)$, and $\delta \in (\sfG/\sfH)_\lambda$. We have the following restriction and induction rules:
\begin{enumerate}[(a)]
\item \label{Prop:resirr} $\Res^\sfG_\sfH(V_\lambda)=\Res^\sfG_\sfH(V_\mu)$ for any $\mu \simeq \lambda$, 
\item \label{Prop:res} $\Res^\sfG_\sfH(V_\lambda)=\oplus_{\delta \in (\sfG/\sfH)_\lambda}V_{([\lambda], \delta)}$ , 
\item \label{Prop:ind} $\Ind^\sfG_\sfH(V_{([\lambda], \delta)})=\oplus V_\mu$, where the direct sum is over all distinct $\mu \simeq \lambda$.
\end{enumerate}
\end{prop}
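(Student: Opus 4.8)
The statement is a standard application of Clifford theory to the normal subgroup $\sfH = G(r,p,n) \trianglelefteq \sfG = G(r,1,n)$ with \emph{cyclic} quotient $\sfG/\sfH \cong \mu_p$; the plan is to combine the two dual $\sfG/\sfH$-actions (tensoring by linear characters on $\Irrep(\sfG)$, conjugation on $\Irrep(\sfH)$) and to use that cyclicity forces everything in sight to be multiplicity-free.

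First I would set up the dual picture. The linear characters of $\sfG$ that are trivial on $\sfH$ form a group $\langle\delta_1^{d}\rangle\cong\widehat{\sfG/\sfH}\cong\mu_p$ (cf.\ the discussion preceding the Proposition and Lemma \ref{Lem:GrpnNormal}), and by Lemma \ref{Lem:shift} tensoring by $\delta_1^{jd}$ carries $V_\lambda$ to $V_{\lambda^{\shift{jd}}}$; hence the tensoring action on $\Irrep(\sfG)$ is exactly the shift-by-$d$ action, its orbit through $\lambda$ is $[\lambda]$ of size $b(\lambda)$, and its stabiliser is $(\sfG/\sfH)_\lambda$ of order $u(\lambda)=p/b(\lambda)$. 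Dually, $\sfG/\sfH$ acts on $\Irrep(\sfH)$ by conjugation. The link between the two actions is the standard Clifford dictionary: for $W\in\Irrep(\sfH)$ with inertia group $\mathsf{T}$, the set of $V\in\Irrep(\sfG)$ lying over $W$ (i.e.\ with $W$ a constituent of $\Res^\sfG_\sfH V$) is a single orbit under tensoring by $\widehat{\sfG/\sfH}$ --- here one uses surjectivity of the restriction map $\widehat{\sfG/\sfH}\to\widehat{\mathsf{T}/\sfH}$ together with Gallagher's theorem --- and dually the constituents of $\Res^\sfG_\sfH V$ form a single $\sfG$-conjugacy orbit in $\Irrep(\sfH)$. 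Crucially $\mathsf{T}/\sfH$ is cyclic, so (as $K$ is algebraically closed) $W$ extends to $\mathsf{T}$, and Gallagher's theorem then shows that $\Res^\sfG_\sfH V$ is multiplicity-free for every $V$.

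With this machinery the three assertions fall out quickly. For \eqref{Prop:resirr}: if $\mu\simeq\lambda$ then $V_\mu=V_\lambda\otimes\delta$ for some $\delta$ trivial on $\sfH$, whence $\Res^\sfG_\sfH V_\mu=\Res^\sfG_\sfH V_\lambda\otimes\Res^\sfG_\sfH\delta=\Res^\sfG_\sfH V_\lambda$. For \eqref{Prop:res}: fix a constituent $W$ of $\Res^\sfG_\sfH V_\lambda$ with inertia group $\mathsf{T}$; the set of $\sfG$-reps over $W$ equals $[\lambda]$, so $[\mathsf{T}:\sfH]=|[\lambda]|=b(\lambda)$, and therefore the number of constituents of $\Res^\sfG_\sfH V_\lambda$ --- the size $[\sfG:\mathsf{T}]$ of the $\sfG$-orbit of $W$ --- equals $p/b(\lambda)=u(\lambda)$; by multiplicity-freeness, $\Res^\sfG_\sfH V_\lambda$ is thus a sum of $u(\lambda)$ distinct irreducibles, pairwise $\sfG$-conjugate and hence all of dimension $\dim V_\lambda/u(\lambda)$. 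To index these $u(\lambda)$ summands by the $u(\lambda)$ elements of $(\sfG/\sfH)_\lambda$ one notes that $\sfG/\mathsf{T}$ permutes them simply transitively and that, in the cyclic group $\sfG/\sfH$, $\sfG/\mathsf{T}\cong(\sfG/\sfH)/(\mathsf{T}/\sfH)$ is canonically isomorphic to the unique subgroup $(\sfG/\sfH)_\lambda$ of order $u(\lambda)$; this is the normalisation of \cite{bagno2007colored}. For \eqref{Prop:ind}: Frobenius reciprocity (Theorem \ref{Thm:Frobenius}) gives $\langle\Ind^\sfG_\sfH V_{([\lambda],\delta)},V_\mu\rangle_\sfG=\langle V_{([\lambda],\delta)},\Res^\sfG_\sfH V_\mu\rangle_\sfH$, which is $1$ when $\mu\simeq\lambda$ and $0$ otherwise, while the dimension count $p\cdot(\dim V_\lambda)/u(\lambda)=b(\lambda)\dim V_\lambda=\sum_{\mu\in[\lambda]}\dim V_\mu$ shows there is nothing else, so $\Ind^\sfG_\sfH V_{([\lambda],\delta)}=\bigoplus_{\mu\simeq\lambda}V_\mu$. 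Finally, the bijection follows by running over orbits: every $W\in\Irrep(\sfH)$ occurs in $\Res^\sfG_\sfH V_\lambda$ for some $\lambda$ (take any constituent $V_\lambda$ of $\Ind^\sfG_\sfH W$ and apply Frobenius), the admissible $\lambda$ form exactly one orbit $[\lambda]$ by \eqref{Prop:resirr} and the orbit description, and $\delta\in(\sfG/\sfH)_\lambda$ singles out $W$ among the constituents --- so $([\lambda],\delta)\mapsto V_{([\lambda],\delta)}$ is a bijection.

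I expect the one genuinely delicate point to be the indexing in \eqref{Prop:res}: identifying the abstract constituents of $\Res^\sfG_\sfH V_\lambda$ with the explicit group $(\sfG/\sfH)_\lambda$ requires choosing extensions of $W$ to the inertia group compatibly, and this is precisely where the cyclicity of $\sfG/\sfH$ is used (it makes those extensions canonical) and where I would lean on the explicit constructions of \cite{stembridge1989eigenvalues} and \cite{bagno2007colored}. Everything else is the routine Clifford-theory bookkeeping indicated above.
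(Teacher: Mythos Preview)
The paper does not actually prove this proposition: it is stated with attribution to Stembridge \cite[Prop.~6.1]{stembridge1989eigenvalues} and \cite[Thm.~5.1]{bagno2007colored} and then used without further argument. Your proposal supplies exactly the Clifford-theoretic proof that underlies those references, and it is correct; in particular, your identification of the key point --- that cyclicity of $\sfG/\sfH$ guarantees extensions to the inertia group and hence multiplicity-freeness via Gallagher --- is precisely what makes the indexing by $(\sfG/\sfH)_\lambda$ work, and this is the content of Stembridge's argument.
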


\begin{Bem} \label{Rmk:Resirred}
 In particular, from \ref{Prop:resirr} and \ref{Prop:res} it follows that for $\lambda \in \Irrep(\sfG)$, the restriction $\Res^\sfG_\sfH(V_\lambda)$ is irreducible if and only if $|(\sfG/\sfH)_\lambda|=u(\lambda)=1$.  If $u(\lambda) >1$, then the restriction $\Res^\sfG_\sfH(V_\lambda)$ of \ref{Prop:res} will split into $u(\lambda)$ many irreducibles. Moreover, note that in \ref{Prop:ind} , each irreducible $V_\mu$ appears exactly once. 
\end{Bem}

\begin{notation} \label{Not:Grpn} In order to make the notation less clumsy, if $u(\lambda)=1$, then the restriction $([\lambda],\delta)$ stays irreducible, and we will just denote it by $[\lambda]$ (omitting the second component).
  
If $p | r$, then set again $d=\frac{r}{p}$. It is then helpful to visualize the orbit $[\lambda]$ as $p d$-tuples, since the action of $\sfG/\sfH$ will be in $d$-shifts: the first $d$ Young diagrams of $[\lambda]$ form the first $d$-tuple, and so on. For this first partition the $\sfG$-representation $\lambda=(\lambda^{(1)}, \ldots, \lambda^{(r)})$ into $d$-tuples: $\lambda=((\lambda^{(1)}, \ldots, \lambda^{(d)}),(\lambda^{(d+1)}, \ldots, \lambda^{(2d)}), \ldots, (\lambda^{((p-1)d+1)}, \ldots, \lambda^{(pd)}))$. Then 
$$[\lambda]=[((\lambda^{(1)}, \ldots, \lambda^{(d)}),(\lambda^{(d+1)}, \ldots, \lambda^{(2d)}), \ldots, (\lambda^{((p-1)d+1)}, \ldots, \lambda^{(pd)}))] \ . $$
Clearly $(\sfG/\sfH)_\lambda \cong \mu_{u(\lambda)}$. We choose a generator $\xi$ of $\mu_{u(\lambda)}$, then any element in $(\sfG/\sfH)_{u(\lambda)}$ can be uniquely written as $\xi^t$, $t=0, \ldots, u(\lambda)-1$. Instead of $\delta$ we may write $\xi^t$ (or simply $t$).
\end{notation}

\begin{defi} \label{Def:FundDomain} 
With Notation \ref{Not:Grpn} for an irreducible representation $([\lambda],t)$ with $t \in \{0, \ldots,$ $u(\lambda)-1\}$ of $G(r,p,n)$ we call the $(t+1)$-st of the $d$-tuples of $\lambda$ the \emph{fundamental domain} of $([\lambda],t)$.
\end{defi}

This boils down to the following: if $\lambda$ has no ``symmetry'', that is, if $u(\lambda)=1$, then $[\lambda]$ is the fundamental domain. If $\lambda$ has $u(\lambda) >1$ symmetries, then it has $u(\lambda)$ conjugate representations in $\sfH$, which we can distinguish by their fundamental domains.

\begin{Bem}
The fundamental domain will be used to describe the arrows in the McKay quiver of $G(r,p,n)$, in order to make clear, which Young diagrams can be used for moving boxes.  The fundamental domain is highlighting which of the components of the $p d$-tuples are \emph{active}, in the sense, that these components are potentially available for induction and also for the action $-\otimes V_{\stn}$ described in Section \ref{Sub:McKayGrpn}.
\end{Bem}

\begin{Bem} In the thesis \cite{LewisThesis}, the notation of the irreducible representations of $\sfH$ was more visual: one can place the components of $[\lambda]$ around a circle, so that it is easy to see the rotational symmetries among the Young diagrams.
\end{Bem}

\begin{Bem}
One can also determine the dimension of the restricted representations: each direct summand of $\Res^\sfG_\sfH(V_\lambda)$ will have dimension $\frac{\dim V_\lambda}{u(\lambda)}$.
\end{Bem}

We end this section with a few examples. 

\begin{example}[$r=p$ and $u(\alpha)=1$]
Consider the irreducible representations
\[ \alpha = \left( \ydiagram{2,1}, \ydiagram{2}, \ydiagram{3} \right) , \quad
 \beta =  \left(  \ydiagram{2}, \ydiagram{3}, \ydiagram{2,1} \right) \text{  and } \gamma=( \ydiagram{3} , \ydiagram{2}, \ydiagram{2,1} ) \]
of $\sfG=G(3,1,8)$, and let $\sfH=G(3,3,8)$. Then
$$ \Res^\sfG_\sfH(\alpha)=\Res^\sfG_\sfH(\beta)=[\left( \ydiagram{2,1}, \ydiagram{2}, \ydiagram{3} \right)] \ ,$$
but $\Res^\sfG_\sfH(\gamma)\neq \Res^\sfG_\sfH(\alpha)$.
\end{example}

\begin{example}[$p | r$ and $u(\alpha)=1$] \label{Ex:pdividesr} Consider  the irreducible representation
\[ \alpha = \left( \ydiagram{1} , \ydiagram{2}, \ydiagram{3}, \ydiagram{4} \right) ,\]
of $\sfG=G(4,1,10)$. Let $\sfH=G(4,4,10)$ and $\sfH'=G(4,2,10)$. Then $\Res^\sfG_\sfH(\alpha)=[\alpha]$ and 
$$\Res_{\sfH'}^\sfG(\alpha)=[((\ydiagram{1},\ydiagram{2}),(\ydiagram{3},\ydiagram{4}))]=[((\ydiagram{3},\ydiagram{4}),(\ydiagram{1},\ydiagram{2}))] \ ,$$
since $\delta_{\frac{4}{2}}$ shifts any component of $\alpha$ by $2$. Note that $\alpha'=(\ydiagram{3},\ydiagram{4},\ydiagram{1},\ydiagram{2})$ has the same restriction to $\sfH'$.

For the induction of $[\alpha]$ from $\sfH$ back to $\sfG$ we get, using Prop.~\ref{Prop:Stembridge}, that this splits into $b(\alpha)=4$ irreducible direct summands:
$$\Ind^\sfG_\sfH([\alpha])=(\ydiagram{1}, \ydiagram{2}, \ydiagram{3}, \ydiagram{4}) \oplus (\ydiagram{2}, \ydiagram{3}, \ydiagram{4}, \ydiagram{1}) \oplus ( \ydiagram{3}, \ydiagram{4}, \ydiagram{1}, \ydiagram{2}) \oplus (\ydiagram{4}, \ydiagram{1}, \ydiagram{2},  \ydiagram{3}) \ .$$
Moreover, for the induction of $[((\ydiagram{1},\ydiagram{2}),(\ydiagram{3},\ydiagram{4}))]$ from $\sfH'$ to $\sfG$ we similarly get 
$$\Ind^\sfG_{\sfH'}([\alpha])= (\ydiagram{1}, \ydiagram{2}, \ydiagram{3}, \ydiagram{4}) \oplus (\ydiagram{3}, \ydiagram{4}, \ydiagram{1}, \ydiagram{2}) \ . $$
\end{example}

\begin{example}[$u(\alpha)>1$] Take $\alpha = \left(  \ydiagram{2}, \ydiagram{1}, \ydiagram{2}, \ydiagram{1} \right)$ in $\sfG=G(4,1,6)$ and let $\sfH=G(4,4,6)$. Then $u(\alpha)=2$ and the restriction of $\alpha$ to $\sfH$ is the direct sum
$$\Res^\sfG_\sfH(\alpha)=([\left( \ydiagram{2}, \ydiagram{1}, \ydiagram{2}, \ydiagram{1} \right)],0) \oplus ([\left(\ydiagram{2}, \ydiagram{1}, \ydiagram{2}, \ydiagram{1} \right)],1) \ .$$
  The two irreducibles are identical except for their fundamental domains: the fundamental domain of $([\left( \ydiagram{2}, \ydiagram{1}, \ydiagram{2}, \ydiagram{1} \right)],0)$ is $\left(\underline{ \ydiagram{2}, \ydiagram{1}}, \ydiagram{2}, \ydiagram{1} \right)$ and for $([\left( \ydiagram{2}, \ydiagram{1}, \ydiagram{2}, \ydiagram{1} \right)],1)$ we have $\left( \ydiagram{2}, \ydiagram{1}, \underline{\ydiagram{2}, \ydiagram{1} } \right)$.
  
 Going back, we see that 
 $$\Ind^\sfG_\sfH(\left(\underline{ \ydiagram{2}, \ydiagram{1}}, \ydiagram{2}, \ydiagram{1} \right))=\Ind^\sfG_\sfH( \left( \ydiagram{2}, \ydiagram{1}, \underline{\ydiagram{2}, \ydiagram{1} } \right))=   \left( \ydiagram{2}, \ydiagram{1}, \ydiagram{2}, \ydiagram{1} \right) \oplus \left( \ydiagram{1}, \ydiagram{2}, \ydiagram{1}, \ydiagram{2} \right) \ . $$ 
\end{example}

\begin{example}[$u(\alpha)>1$]
Take $\alpha=\left( \ydiagram{1}, \ydiagram{1}, \ydiagram{1} \right)$ in $G(3,1,3)$. Then 
$$\Res^{G(3,1,3)}_{G(3,3,3)}(\alpha)=([\left( \ydiagram{1}, \ydiagram{1}, \ydiagram{1} \right)],0)\oplus ([\left( \ydiagram{1}, \ydiagram{1}, \ydiagram{1} \right)],1) \oplus ([\left( \ydiagram{1}, \ydiagram{1}, \ydiagram{1} \right)],2) \ ,$$
which we can also write (highlighting the fundamental domains) as
$$\Res^{G(3,1,3)}_{G(3,3,3)}(\alpha)=[\left(\underline{\ydiagram{1}}, \ydiagram{1}, \ydiagram{1} \right)] \oplus [\left( \ydiagram{1}, \underline{\ydiagram{1}}, \ydiagram{1} \right)] \oplus [\left( \ydiagram{1}, \ydiagram{1},\underline{\ydiagram{1}} \right)] \ .$$
The induction $\Ind^{G(3,1,3)}_{G(3,3,3)}([\left( \ydiagram{1}, \ydiagram{1}, \ydiagram{1} \right)],i)$ is the irreducible $\alpha$ for $i=0,1,2$.
\end{example}

\subsection{The quiver} \label{Sub:McKayGrpn}

Let $V_{\stn}$ be the standard representation of $G(r,1,n)$.  It is clear that $\Res^{G(r,1,n)}_{G(r,p,n)}(V_{\stn})$ is the standard representation of $G(r,p,n)$.
Restriction to $G(r,p,n)$ will still yield an irreducible representation $[\stn]$ if and only if $(r,p,n) \neq (2,1,2)$ or $(r,p,n) \neq (2,2,2)$. This can easily be seen from Prop.~\ref{Prop:Stembridge}.

\begin{lemma} \label{Lem:ResHnonzero}
Let $V_\lambda$ be an irreducible representation of $\sfG=G(r,1,n)$ with character $\chi_\lambda$ and set $\sfH=G(r,p,n)$ for some $p >1$, and $u(\lambda)=|(\sfG/\sfH)_\lambda|$. Further let $V_\alpha$ be any irreducible $\sfG$-representation with character $\chi_\alpha$ and $V_{\stn}$ be the standard representation of $\sfG$ with character $\chi_{\stn}$.
Then if $u(\lambda) > 1$ and $$\langle \chi_\lambda \cdot \chi_{\stn}, \chi_{\alpha} \rangle \neq 0 \ , $$
that is, if there is an arrow from $\lambda$ to $\alpha$ in $\Gamma(\sfG)$, then $\Res^\sfG_\sfH(\alpha)$ is in $\Irrep(\sfH)$.
\end{lemma}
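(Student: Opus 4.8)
The plan is to argue by contradiction, using Remark~\ref{Rmk:Resirred}: since $\Res^\sfG_\sfH(V_\alpha)$ is irreducible exactly when $u(\alpha)=1$, it suffices to derive a contradiction from the assumption $u(\alpha)>1$. The first step is to turn $u(\mu)>1$ (for $\mu=\lambda$, and then for $\mu=\alpha$) into a periodicity statement: $u(\mu)>1$ means $\mu$ is fixed by a nontrivial element of $\sfG/\sfH\cong\mu_p=\langle\delta_1^{d}\rangle$, so by Lemma~\ref{Lem:shift} we have $\mu^{(j)}=\mu^{(j+kd)}$ for all $j\in\ZZ/r$ and some $0<k<p$ (where $d=r/p$); replacing $kd$ by $\gcd(kd,r)$, the sequence of component sizes is periodic of some period dividing $r$ and strictly less than $r$. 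Writing $n_j:=|\lambda^{(j)}|$ and $m_j:=|\alpha^{(j)}|$, I thus obtain proper divisors $f\mid r$ and $e\mid r$ (both $<r$) such that $(n_j)_{j\in\ZZ/r}$ is $f$-periodic and $(m_j)_{j\in\ZZ/r}$ is $e$-periodic.

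Next I would use the arrow. The hypothesis $\langle\chi_\lambda\cdot\chi_{\stn},\chi_\alpha\rangle\neq 0$ means there is an arrow $\lambda\to\alpha$ in $\Gamma(G(r,1,n))$, so by Theorem~\ref{thm:ML} the $r$-tuple $\alpha$ is obtained from $\lambda$ by deleting one cell from some position $i$ and adding one cell to position $i+1\pmod r$. On the level of sizes this says exactly that $n_j-m_j$ equals $1$ at $j\equiv i$, equals $-1$ at $j\equiv i+1$, and vanishes elsewhere; crucially, the two positions where $\lambda$ and $\alpha$ differ in size are \emph{adjacent}.

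The heart of the proof is then a root-of-unity (discrete Fourier) argument. Fix a primitive $r$-th root of unity $\zeta$. For every $t\in\{1,\dots,r-1\}$ one computes $\sum_{j\in\ZZ/r}(n_j-m_j)\zeta^{jt}=\zeta^{it}-\zeta^{(i+1)t}=\zeta^{it}(1-\zeta^{t})\neq 0$. On the other hand, $f$-periodicity of $(n_j)$ forces $\sum_{j}n_j\zeta^{jt}=0$ whenever $(r/f)\nmid t$ (standard geometric-sum cancellation), and $e$-periodicity of $(m_j)$ forces $\sum_{j}m_j\zeta^{jt}=0$ whenever $(r/e)\nmid t$. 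Hence for every $t\in\{1,\dots,r-1\}$ at least one of these sums is nonzero, so $(r/f)\mid t$ or $(r/e)\mid t$. But among $\{1,\dots,r-1\}$ there are only $f-1$ multiples of $r/f$ and $e-1$ multiples of $r/e$, so this set has at most $(f-1)+(e-1)=e+f-2$ elements; since $e$ and $f$ are divisors of $r$ strictly smaller than $r$ we have $e\le r/2$ and $f\le r/2$, giving $e+f-2\le r-2<r-1$ — a contradiction. Therefore $u(\alpha)=1$ and $\Res^\sfG_\sfH(V_\alpha)$ is irreducible.

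The step I expect to be the crux is the non-vanishing $\sum_{j}(n_j-m_j)\zeta^{jt}\neq0$ for \emph{all} nonzero frequencies $t$, and this is exactly where the precise form of the arrows in Theorem~\ref{thm:ML} matters: the cell moves only \emph{one} step, so the difference of the size sequences is supported on two adjacent positions, and its Fourier transform is $\zeta^{it}(1-\zeta^t)$, which is nonzero for all $t\not\equiv 0$. Were the two positions merely distinct (say differing by $r/3$ when $3\mid r$), the analogous statement would be false — one can then have both size sequences periodic with proper periods — so the "move one box to the right" structure, not just "$\alpha$ and $\lambda$ differ in two components", is what makes the argument go through. (A secondary, routine point is the reduction at the start that extracts a period dividing $r$ from $u(\mu)>1$.)
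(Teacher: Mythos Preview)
Your proof is correct. Both you and the paper reduce to showing $u(\alpha)=1$ and then invoke Remark~\ref{Rmk:Resirred}, so the overall architecture is identical. The difference is in how the key combinatorial fact is justified: the paper simply asserts that ``moving one cell from component $i$ to component $i+1$ (modulo $r$) in $\lambda$ will certainly break the symmetry of $\lambda$'', treating this as evident from the description of arrows in Theorem~\ref{thm:ML}. You instead give a rigorous argument by contradiction, translating $u(\lambda)>1$ and $u(\alpha)>1$ into periodicity of the size sequences $(n_j)$ and $(m_j)$ with proper periods $f,e\mid r$, and then using that the Fourier transform of the difference sequence $\zeta^{it}(1-\zeta^t)$ is nonvanishing at every nonzero frequency to force $(r/f)\mid t$ or $(r/e)\mid t$ for all $t\in\{1,\dots,r-1\}$; the count $(f-1)+(e-1)\le r-2<r-1$ then gives the contradiction. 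Your argument is more work but is self-contained, and your closing remark is apt: the adjacency of the two affected positions (i.e., the ``move one step to the right'' rule, not merely ``move to some other position'') is exactly what makes $1-\zeta^t\neq 0$ for all $t\not\equiv 0$, and hence what makes the proof go through. The naive attempt to argue via $\operatorname{lcm}(e,f)$ being a period of $(n_j-m_j)$ fails precisely when $\operatorname{lcm}(e,f)=r$ (e.g.\ $r=6$, $e=2$, $f=3$), which your Fourier count handles cleanly.
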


\begin{proof} Each indecomposable direct summand of $V_\lambda \otimes V_{\stn}$ must have $u(\lambda)=1$, since moving one cell from component $i$ to component $i+1$ (modulo $r$) in $\lambda$ will certainly break the symmetry of $\lambda$. The statement of the lemma now follows from Remark \ref{Rmk:Resirred}.
\end{proof}

\begin{lemma} \label{Lem:symmetryDiv}
Let $V_\lambda$ be an irreducible representation of $\sfG=G(r,1,n)$ with standard representation $V_{\stn}$ and set $\sfH=G(r,p,n)$ for $p >1$, and $u(\lambda)=|(\sfG/\sfH)_\lambda|$ the order of the stabilizer of $\lambda$. Let $V_\varphi$ be any irreducible $\sfH$-representation. Then
$$u(\lambda) | \langle \chi_{\lambda} \cdot \chi_{\stn}, \chi_{\Ind^\sfG_\sfH(\varphi)}\rangle \ , $$
that is, there are either $0$ or a multiple of $u(\lambda)$ many arrows from $\lambda$ to $\Ind^\sfG_\sfH(\varphi)$ in $\Gamma(\sfG)$.
\end{lemma}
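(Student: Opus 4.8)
The plan is to turn the left-hand side into a sum of McKay multiplicities for $\sfG=G(r,1,n)$ and then exploit the symmetry of $\Gamma(\sfG)$ coming from the group $\sfG/\sfH\cong\mu_p$, viewed as the group of linear characters of $\sfG$ that are trivial on $\sfH$. By Proposition~\ref{Prop:Stembridge}, the irreducible $\sfH$-representation $V_\varphi$ is isomorphic to some $V_{([\lambda'],\delta)}$, and $\Ind^\sfG_\sfH(V_\varphi)=\bigoplus_{\mu\in[\lambda']}V_\mu$ with each $\mu$ occurring exactly once (Remark~\ref{Rmk:Resirred}). Writing $m_\mu:=\langle\chi_\lambda\cdot\chi_{\stn},\chi_\mu\rangle$ for the number of arrows $\lambda\to\mu$ in $\Gamma(\sfG)$, this gives
\[ \langle\chi_\lambda\cdot\chi_{\stn},\chi_{\Ind^\sfG_\sfH(\varphi)}\rangle=\sum_{\mu\in[\lambda']}m_\mu, \]
so it suffices to prove that $u(\lambda)$ divides this sum. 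If $u(\lambda)=1$ there is nothing to show, so from now on assume $u(\lambda)>1$.

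Next I would record the relevant symmetry. For any linear character $\delta$ of $\sfG$ one has $m_\mu=\langle\chi_\lambda\cdot\chi_{\stn},\chi_\mu\rangle=\langle\chi_{\lambda\otimes\delta}\cdot\chi_{\stn},\chi_{\mu\otimes\delta}\rangle$, since multiplying both arguments of the inner product by the function $\chi_\delta$, which has absolute value $1$ everywhere, does not change it. Taking $\delta$ in the stabilizer $(\sfG/\sfH)_\lambda$, for which $\lambda\otimes\delta\cong\lambda$ by definition (cf.\ Lemma~\ref{Lem:shift}), we get $m_\mu=m_{\mu\otimes\delta}$ for all $\mu$. Hence $\mu\mapsto m_\mu$ is constant along the orbits of the action of $(\sfG/\sfH)_\lambda$ on the set $[\lambda']$ (this action makes sense because $(\sfG/\sfH)_\lambda$ is a subgroup of $\sfG/\sfH$, which preserves $[\lambda']$), and therefore $\sum_{\mu\in[\lambda']}m_\mu=\sum_{O}\lvert O\rvert\, m_{\mu_O}$, where $O$ runs over these orbits and $\mu_O\in O$ is arbitrary.

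It remains to show that every orbit $O$ contributing a nonzero term has size exactly $u(\lambda)$. If $m_{\mu_O}\neq 0$ there is an arrow $\lambda\to\mu_O$ in $\Gamma(\sfG)$, so Lemma~\ref{Lem:ResHnonzero} together with Remark~\ref{Rmk:Resirred} forces $u(\mu_O)=1$, i.e.\ the $\sfG/\sfH$-stabilizer of $\mu_O$ is trivial; consequently the stabilizer of $\mu_O$ inside $(\sfG/\sfH)_\lambda$ is trivial, and by the orbit--stabilizer theorem $\lvert O\rvert=\lvert(\sfG/\sfH)_\lambda\rvert=u(\lambda)$. Thus every summand $\lvert O\rvert\, m_{\mu_O}$ is either $0$ or equal to $u(\lambda)\,m_{\mu_O}$, and the whole sum is divisible by $u(\lambda)$, as claimed. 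The one place where real content enters --- rather than formal manipulation --- is this last step: a priori the action of $(\sfG/\sfH)_\lambda$ on $[\lambda']$ can have fixed points (indeed it is trivial when $[\lambda']=[\lambda]$), so the divisibility is not a consequence of the symmetry alone; what rescues it is the geometric fact, packaged in Lemma~\ref{Lem:ResHnonzero}, that moving one cell of $\lambda$ destroys all of its rotational symmetry, so that no symmetric vertex can be the target of an arrow out of $\lambda$.
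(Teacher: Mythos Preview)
Your proof is correct and follows essentially the same strategy as the paper: decompose $\Ind^\sfG_\sfH(\varphi)$ into its $\sfG$-irreducibles $\mu\in[\lambda']$, write the pairing as $\sum_\mu m_\mu$, invoke Lemma~\ref{Lem:ResHnonzero} to see that any $\mu$ with $m_\mu\neq 0$ has trivial $\sfG/\sfH$-stabilizer, and then use the $(\sfG/\sfH)_\lambda$-symmetry to group the nonzero contributions into blocks of size $u(\lambda)$. The only difference is one of phrasing in this last step: the paper argues combinatorially (the $u(\lambda)$ rotational symmetries of $\lambda$ give $u(\lambda)$ parallel cell-moves landing on distinct shifts of $\mu$), whereas you do it abstractly via the invariance of the inner product under twisting by a linear character and the orbit--stabilizer theorem. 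Your formulation has the advantage of not needing the explicit combinatorial description of $\Gamma(\sfG)$ from Theorem~\ref{thm:ML}, and it makes the role of $u(\mu)=1$ (as forcing a free action, not merely distinctness) completely transparent.
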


\begin{proof}
By Prop.~\ref{Prop:Stembridge},  $\varphi$ is of the from $([\phi],\delta)$ for an irreducible representation $\phi$ of $\sfG$ and $\delta \in (\sfG/\sfH)_\phi$. By \ref{Prop:ind} of the same proposition, $\Ind^\sfG_\sfH(\varphi)=\oplus \mu$, where the sum runs over all distinct irreducible $\sfG$-representations $\mu$ with $\mu \simeq \phi$. Thus we can write
$$\langle \chi_{\lambda} \cdot \chi_{\stn}, \chi_{\Ind^\sfG_\sfH(\varphi)}\rangle=\sum_{\mu \simeq \phi} \langle \chi_{\lambda} \cdot \chi_{\stn}, \chi_{\mu}\rangle \ .$$
By Lemma \ref{Lem:ResHnonzero}, if $\langle \chi_{\lambda} \cdot \chi_{\stn}, \chi_\mu \rangle \neq 0$, then $\Res^\sfG_\sfH(\mu)$ is irreducible in $\sfH$, which implies $u(\mu)=1$. If now moving a single cell of $\lambda$ can result in a summand of $\Ind^\sfG_\sfH(\varphi)$, then there will be $u(\lambda)$ many constituents of $\lambda \otimes \stn$ that correspond to a distinct $\mu \simeq \phi$. Since all these $\mu$ appear and $u(\mu)=1$, we are done. 
\end{proof}

\begin{Thm} \label{Thm:McKayGrpn}
Let $\sfG=G(r,1,n)$ and $\sfH=G(r,p,n)$ with $p >1$ as above. Further let $\Gamma(\sfH)$ be the McKay quiver of $\sfH$. Let $([\alpha],\delta)$ and $([\beta],\delta')$ be two irreducible representations of $\sfH$. Then there is an arrow with source $([\alpha],\delta)$ and target $([\beta],\delta')$ in $\Gamma(\sfH)$ whenever $[\beta]$ can be obtained from $([\alpha],\delta)$ by moving a single cell contained in the fundamental domain of $[\alpha]$ cyclically to the right. 
\end{Thm}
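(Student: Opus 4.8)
The plan is to transfer the computation of arrows in $\Gamma(\sfH)$ to a computation in $\Gamma(\sfG)$ via Frobenius reciprocity and Clifford theory, exactly in the spirit of the proof of Theorem~\ref{thm:ML}. Write $V_{\stn}^{\sfH}=\Res^\sfG_\sfH(V_{\stn})$ for the standard representation of $\sfH$, and recall that the number of arrows from $([\alpha],\delta)$ to $([\beta],\delta')$ in $\Gamma(\sfH)$ is
$$\langle \chi_{([\alpha],\delta)} \cdot \chi_{V_{\stn}^{\sfH}}, \chi_{([\beta],\delta')} \rangle_\sfH \ .$$
First I would use the projection formula (second part of Theorem~\ref{Thm:Frobenius}), which gives $\Ind^\sfG_\sfH\big(\Res^\sfG_\sfH(V_\lambda)\otimes W'\big)\cong V_\lambda\otimes\Ind^\sfG_\sfH(W')$ for a $\sfG$-representation $V_\lambda$ and an $\sfH$-representation $W'$. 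Applying this with $V_\lambda$ a $\sfG$-representation restricting to (a sum containing) $V_{([\alpha],\delta)}$, and $W'=V_{([\beta],\delta')}$, one reduces the $\sfH$-side Hom space to a $\sfG$-side Hom space $\Hom_{K\sfG}(V_\lambda\otimes V_{\stn},\Ind^\sfG_\sfH V_{([\beta],\delta')})$, which by Prop.~\ref{Prop:Stembridge}(c) equals $\bigoplus_{\mu\simeq\beta}\Hom_{K\sfG}(V_\lambda\otimes V_{\stn},V_\mu)$; the latter is governed by Theorem~\ref{thm:ML}, i.e.\ by moving a single cell one position cyclically to the right in some lift $\lambda$ of $[\alpha]$.

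The substantive step is then bookkeeping with the stabilizers and fundamental domains. By Prop.~\ref{Prop:Stembridge}(b), $\Res^\sfG_\sfH(V_\lambda)=\bigoplus_{\delta\in(\sfG/\sfH)_\lambda}V_{([\lambda],\delta)}$, so if $u(\lambda)=|(\sfG/\sfH)_\lambda|=1$ then $\Res^\sfG_\sfH$ is irreducible and the fundamental domain of $([\lambda],\delta)$ is all of $[\lambda]$; if $u(\lambda)>1$ the $u(\lambda)$ constituents are distinguished precisely by their fundamental domains (Def.~\ref{Def:FundDomain}, Notation~\ref{Not:Grpn}). The point is: for a lift $\lambda$ of $[\alpha]$, the cells of $\lambda$ that produce arrows in $\Gamma(\sfG)$ landing in some $\mu\simeq\beta$ — after the identification of the restriction with $([\alpha],\delta)$ — are exactly the cells lying in the fundamental domain of $([\alpha],\delta)$, because moving a cell outside the fundamental domain corresponds, under the $d$-shift identification $\mu\simeq\phi\iff\mu=\phi^{\shift{id}}$, to a move already accounted for by another value of $\delta$ or another choice of lift. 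Here Lemma~\ref{Lem:symmetryDiv} and Lemma~\ref{Lem:ResHnonzero} do the essential numerical work: they show that the constituents of $V_\lambda\otimes V_{\stn}$ that lie in $\Ind^\sfG_\sfH(\varphi)$ appear in full $(\sfG/\sfH)_\lambda$-orbits and each with $u=1$, so that after dividing by $|(\sfG/\sfH)_\lambda|$ (the multiplicity coming from $\Res^\sfG_\sfH(V_\lambda)=\bigoplus_\delta V_{([\lambda],\delta)}$) one is left with exactly one arrow per cell-move confined to the fundamental domain.

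Concretely I would proceed as follows. (1) Fix a lift $\lambda$ of $[\alpha]$ and an irreducible $\sfH$-summand $V_{([\alpha],\delta)}\subseteq\Res^\sfG_\sfH V_\lambda$, using Notation~\ref{Not:Grpn} to realize $\lambda$ as a $pd$-tuple grouped into $d$-tuples and $\delta=\xi^t$ so that the $(t{+}1)$-st $d$-tuple is the fundamental domain. (2) Compute $\Ind^\sfG_\sfH\{\Res^\sfG_\sfH V_\lambda\otimes V_{\stn}^{\sfH}\}=V_\lambda\otimes\Ind^\sfG_\sfH V_{\stn}^{\sfH}$; since $\Ind^\sfG_\sfH$ of the standard representation of $\sfH$ is a sum of the shifts $V_{\stn}^{\shift{id}}$ by Prop.~\ref{Prop:Stembridge}(c), and $-\otimes V_{\stn}^{\shift{id}}$ just reindexes the move of Theorem~\ref{thm:ML} by a $d$-shift, the arrows one reads off correspond to: delete a cell from component $j$ and add it to component $j{+}1\bmod r$, where $j$ ranges over the fundamental domain. (3) Match multiplicities: Lemma~\ref{Lem:symmetryDiv} says arrows $\lambda\to\Ind^\sfG_\sfH(\varphi)$ in $\Gamma(\sfG)$ come in multiples of $u(\lambda)$, and Lemma~\ref{Lem:ResHnonzero} says the targets all have trivial stabilizer, so dividing by $u(\lambda)$ yields the single arrow in $\Gamma(\sfH)$ stated. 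The main obstacle I anticipate is step (2)–(3): making rigorous the claim that ``a cell-move confined to the fundamental domain, modulo the $d$-shift action, is a well-defined bijection onto the arrows of $\Gamma(\sfH)$'' — i.e.\ carefully checking there is no double-counting when $u(\lambda)>1$ and that the target orbit $[\beta]$ together with the induced fundamental-domain/stabilizer data is uniquely determined — rather than any deep representation theory, since all the hard inputs are already packaged in Prop.~\ref{Prop:Stembridge}, Theorem~\ref{thm:ML}, and Lemmas~\ref{Lem:ResHnonzero}–\ref{Lem:symmetryDiv}.
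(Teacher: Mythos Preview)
Your proposal is correct and follows essentially the same route as the paper: transfer the $\sfH$-computation to $\sfG$ via Frobenius reciprocity to obtain $\langle \chi_\alpha \cdot \chi_{\stn}, \chi_{\Ind^\sfG_\sfH([\beta],\delta')}\rangle_\sfG$, invoke Theorem~\ref{thm:ML} on the $\sfG$-side, and then use Lemmas~\ref{Lem:ResHnonzero} and \ref{Lem:symmetryDiv} to distribute the resulting arrows among the $u(\alpha)$ constituents $([\alpha],\delta)$ of $\Res^\sfG_\sfH V_\alpha$. The paper organizes this as an explicit case split on $u(\alpha)=1$ versus $u(\alpha)>1$, which you might find cleaner than your step~(2): there is no need to induce $V_{\stn}^{\sfH}$ and track the shifts $V_{\stn}^{\shift{id}}$, since $V_{\stn}^{\sfH}=\Res^\sfG_\sfH V_{\stn}$ already lets you pass directly to $V_\alpha\otimes V_{\stn}$ in $\sfG$ --- the restriction to the fundamental domain emerges purely from the equal-sharing argument in step~(3), not from any reindexing in step~(2).
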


\begin{proof}
Note that $\Res^\sfG_\sfH(V_{\stn})=[\stn]$ remains irreducible and also remains the standard representation in $\sfH$. Let $\alpha$ be an irreducible representation of $\sfG$ and let $([\beta],\delta')$ be an irreducible representation of $\sfH$. This means that $\beta \in \Irrep(\sfG)$ and $\delta' \in (\sfG/\sfH)_\beta$. 
The arrows in $\Gamma(\sfH)$ are determined by Frobenius reciprocity. We have
\begin{equation} \label{Eqn:FrobGrpn}
\langle \chi_\alpha \cdot \chi_{\stn}, \chi_{\Ind^\sfG_\sfH([\beta],\delta')}\rangle=\langle \chi_{\Res^\sfG_\sfH(\alpha \otimes \stn)},\chi_{[\beta],\delta'}\rangle=\langle \chi_{\Res^\sfG_\sfH(\alpha)} \cdot \chi_{\Res^\sfG_\sfH(\stn)},\chi_{[\beta],\delta'}\rangle \ .
\end{equation}
We distinguish two cases, depending on whether $u(\alpha)=1$ or $u(\alpha) >1$, that is, whether the restriction of $\alpha$ to $\sfH$ is irreducible or it splits.

{\bf Case 1:} Assume that $u(\alpha)=1$. 
Then $\Res^\sfG_\sfH(\alpha)=[\alpha]$ is irreducible. The induced representation $\Ind^\sfG_\sfH([\beta],\delta')$ consists of the direct sum of all distinct $\beta^{\shift{i \cdot d}}, i=0, \ldots p-1$. From \eqref{Eqn:FrobGrpn} we know that there will be an arrow from $[\alpha]$ to $([\beta],\delta')$ in $\Gamma(\sfH)$ whenever there is an arrow from $\alpha$ to one of the components of $\Ind^\sfG_\sfH([\beta],\delta')$ in $\Gamma(\sfG)$. By Theorem \ref{thm:ML} this is the case if and only if one of the $\beta^{\shift{i \cdot d}}$ can be obtained from the $r$-tuple of $\alpha$ by moving a cell from position $i$ in $\alpha$ to position $i+1 \mod r$. Since $u(\alpha)=1$, after restriction $\alpha$ remains irreducible and the description of arrows is the same: there is an arrow from $[\alpha]$ to $([\beta],\delta')$ if and only if the $r$-tuple associated to $[\beta]$ can be obtained by moving a cell from $[\alpha]$ from position $i$ to position $i+1 \mod r$.

{\bf Case 2:} Assume that $u(\alpha)>1$. Then $\Res^\sfG_\sfH(\alpha)$ splits in $\sfH$ as $\oplus_{\delta \in (\sfG/\sfH)_\alpha}([\alpha],\delta)$. Then equation \eqref{Eqn:FrobGrpn} reads as 
$$\langle \chi_\alpha \cdot \chi_{\stn}, \chi_{\Ind^\sfG_\sfH([\beta],\delta')}\rangle=\sum_{i=1}^{u(\alpha)}\langle \chi_{[\alpha], (\delta_1^d)^i} \cdot \chi_{[\stn]}, \chi_{[\beta],\delta'}\rangle \ ,$$
where $\delta_1^d$ is the generator of $(\sfG/\sfH)_\alpha$.
By Lemma \ref{Lem:symmetryDiv} $u(\alpha)$ divides the left hand side. So there are $u(\alpha)$ arrows indicated by the first pairing to be shared equally among each $([\alpha],(\delta_1^d)^i)$ in the last pairing. Note that each of these irreducible representations has the same associated $r$-tuple $[\alpha]$ but a different fundamental domain. We can think of this action as moving a single available cell in the fundamental domain of $([\alpha],(\delta_1^d)^i)$ cyclically to the right.
\end{proof}

\begin{ex}
Let $[\alpha]=[(\ydiagram{1},\ydiagram{2,1},\ydiagram{1,1},-)]$ be an irreducible representation of $G(4,4,6)$. Then
\begin{align*}[\alpha] \otimes [\stn]  =&[(-,\ydiagram{2,2},\ydiagram{1,1},-)] \oplus [(-,\ydiagram{3,1},\ydiagram{1,1},-)]\oplus [(-,\ydiagram{2,1,1},\ydiagram{1,1},-)]\oplus [(\ydiagram{1},\ydiagram{2},\ydiagram{2,1},-)] \oplus \\
& [(\ydiagram{1},\ydiagram{2},\ydiagram{1,1,1},-)] \oplus [(\ydiagram{1},\ydiagram{1,1},\ydiagram{2,1},-)] \oplus [(\ydiagram{1},\ydiagram{1,1},\ydiagram{1,1,1},-)] \oplus [(\ydiagram{1} ,\ydiagram{2,1},\ydiagram{1},\ydiagram{1})] \ .
\end{align*}
We see that there are $8$ arrows in $\Gamma(G(4,4,6))$ with source $[\alpha]$. The targets of these arrows are the ones appearing in the direct sum calculated above.
\end{ex}

\begin{ex} \label{Ex:loop}
Let $[\alpha]=[(\ydiagram{1},\ydiagram{1,1},\ydiagram{1})]$ be an irreducible representation of $G(3,3,4)$. Again we compute the arrows in $\Gamma(G(3,3,4))$ with $[\alpha]$ as source via
\begin{align*}
[\alpha]\otimes [\stn]= & [(\ydiagram{1},\ydiagram{1},\ydiagram{1,1})]\oplus [(\ydiagram{1},\ydiagram{1},\ydiagram{2})]\oplus [(\ydiagram{1,1},\ydiagram{1,1},-)] \oplus
 [(\ydiagram{2},\ydiagram{1,1},-)] \oplus [(-,\ydiagram{1,1,1},\ydiagram{1})]\oplus [(-,\ydiagram{2,1},\ydiagram{1})] \ . 
\end{align*}
Here we see that the first summand on the right hand side is equal to $[\alpha]$. This means that there is a loop in the McKay quiver.
\end{ex}

\begin{ex} \label{Ex:doublearrow}
Let $[\alpha]=[(\ydiagram{2},\ydiagram{1},-)]$ be an irreducible representation of $G(3,3,3)$. Tensoring with $[\stn]$ yields again the arrows in the McKay quiver with source $[\alpha]$:
$$[\alpha] \otimes [\stn]= [(\ydiagram{1},\ydiagram{2},-)]\oplus [(\ydiagram{1},\ydiagram{1,1},-)] \oplus [(\ydiagram{2},-,\ydiagram{1})] \ .$$
Since $[(\ydiagram{1},\ydiagram{2},-)]=[(\ydiagram{2},-,\ydiagram{1})]$, there is a double arrow from $[\alpha]$ to this representation in $\Gamma(G(3,3,3))$.
\end{ex}

Examples \ref{Ex:loop} and \ref{Ex:doublearrow} show the following

\begin{cor} \label{Cor:loopsmultiple}
Let $\sfH=G(r,p,n)$ with $p>1$. Then the McKay quiver $\Gamma(\sfH)$ may have loops and multiple arrows from a vertex to another. 
\end{cor}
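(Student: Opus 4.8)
The statement is an existence assertion, so the plan is to exhibit small triples $(r,p,n)$ with $p>1$ realizing a loop and realizing a pair of parallel arrows, and to read these off from the combinatorial description of $\Gamma(G(r,p,n))$ provided by Theorem \ref{Thm:McKayGrpn}. Recall from that theorem that the number of arrows from $([\alpha],\delta)$ to $([\beta],\delta')$ is the multiplicity of $[\beta]$ in $[\alpha]\otimes[\stn]$, and that tensoring with $[\stn]$ is computed by moving one available cell in the fundamental domain of $([\alpha],\delta)$ one step to the right cyclically and then passing to the $\simeq$-orbit. Hence a loop at $[\alpha]$ occurs whenever one such cell-move returns a tuple in the orbit $[\alpha]$, and two arrows between $[\alpha]$ and $[\beta]$ occur whenever two distinct legal cell-moves land in the single orbit $[\beta]$.

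For the loop I would take $\sfH=G(3,3,4)$, where $d=r/p=1$ so that $\simeq$ is the full cyclic shift of $3$-tuples, together with $\alpha=(\ydiagram{1},\ydiagram{1,1},\ydiagram{1})$; since $u(\alpha)=1$ the fundamental domain is all of $\alpha$. Among the legal one-step moves listed in Example \ref{Ex:loop}, the tuple $(\ydiagram{1},\ydiagram{1},\ydiagram{1,1})$ appears, and $(\ydiagram{1},\ydiagram{1},\ydiagram{1,1})^{\shift{2}}=(\ydiagram{1},\ydiagram{1,1},\ydiagram{1})$, so this summand of $[\alpha]\otimes[\stn]$ is $[\alpha]$ itself and $\Gamma(\sfH)$ has a loop at $[\alpha]$. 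For the double arrow I would take $\sfH=G(3,3,3)$ (again $d=1$) and $\alpha=(\ydiagram{2},\ydiagram{1},-)$ with $u(\alpha)=1$; the legal moves computed in Example \ref{Ex:doublearrow} produce $[(\ydiagram{1},\ydiagram{2},-)]$, $[(\ydiagram{1},\ydiagram{1,1},-)]$ and $[(\ydiagram{2},-,\ydiagram{1})]$, but $(\ydiagram{1},\ydiagram{2},-)^{\shift{2}}=(\ydiagram{2},-,\ydiagram{1})$, so the first and last name the same vertex of $\Gamma(\sfH)$ and there are two arrows from $[\alpha]$ to $[(\ydiagram{1},\ydiagram{2},-)]$.

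Once Theorem \ref{Thm:McKayGrpn} is in hand there is no real obstacle: the only point requiring care is the orbit bookkeeping --- confirming that the tuples singled out above genuinely lie in a common $\simeq$-class (equivalently, that their cyclic shifts match, which for $r=p$ is just the ordinary cyclic shift of the $r$-tuple) and that the enumerated cell-moves are all distinct and legal. These are exactly the finite verifications carried out in Examples \ref{Ex:loop} and \ref{Ex:doublearrow}, and the corollary is immediate from them.
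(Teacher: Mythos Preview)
Your proposal is correct and takes essentially the same approach as the paper: the paper's proof is simply the sentence ``Examples \ref{Ex:loop} and \ref{Ex:doublearrow} show the following'' placed immediately before the corollary, and you have reproduced exactly those two examples with the orbit checks spelled out. The only addition on your side is making explicit the cyclic-shift verification that the relevant summands lie in the same $\simeq$-class, which the paper leaves implicit.
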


Let us now give examples where irreducible representations with nontrivial stabilizer $(\sfG/\sfH)_\alpha$ appear.

\begin{ex}
Let $([\alpha],0)=[\underline{\ydiagram{1}}, \ydiagram{1}, \ydiagram{1}]$ be an irreducible representation of $G(3,3,3)$. We compute again the arrows with source $([\alpha],0)$:
$$[\underline{\ydiagram{1}}, \ydiagram{1}, \ydiagram{1}] \otimes [\stn]=[-, \ydiagram{1,1}, \ydiagram{1}] \oplus [-, \ydiagram{2}, \ydiagram{1}] \ .$$
We see that there is only one cell in the fundamental domain of $[\alpha]$ and there are two ways to move this cell cyclically to the right.
\end{ex}

\begin{ex} In this examples we will see that also a target of an arrow in the McKay quiver may have nontrivial stabilizer. If this occurs than all conjugates of this representation will also appear as targets of arrows with the same source. Take $[\alpha]=[(-,\ydiagram{2},\ydiagram{1},\ydiagram{1})]$, an irreducible representation of $G(4,4,4)$. Then
\begin{align*}[\alpha]\otimes[\stn]=&[(-,\ydiagram{1},\ydiagram{2},\ydiagram{1})] \oplus [(-,\ydiagram{1},\ydiagram{1,1},\ydiagram{1})] \oplus \\
&  [(\underline{-,\ydiagram{2}}, -, \ydiagram{2})] \oplus [(-,\ydiagram{2},\underline{-,\ydiagram{2}})] \oplus [(-,\ydiagram{2},-,\ydiagram{1,1})]\oplus [(\ydiagram{1},\ydiagram{2},\ydiagram{1},-)] \ . 
\end{align*}
Note here that by Lemma \ref{Lem:ResHnonzero} we will never have arrows between two irreducible representations that both have nontrivial stabilizer.
\end{ex}

\begin{ex}[$p \neq r$] Let $([\alpha],0)$ be the irreducible representation $[(\underline{(\ydiagram{2},\ydiagram{1})},(\ydiagram{2},\ydiagram{1}))]$ of $G(4,2,6)$. Here we are only allowed to move the cells in the first tuple, since $d=2$. Thus we get $4$ arrows with source $([\alpha],0)$:
$$([\alpha],0) \otimes [\stn]=[((\ydiagram{1},\ydiagram{2}),(\ydiagram{2},\ydiagram{1}))]\oplus [((\ydiagram{1},\ydiagram{1,1}),(\ydiagram{2},\ydiagram{1}))]\oplus [((\ydiagram{2},-),(\ydiagram{3},\ydiagram{1}))]\oplus [((\ydiagram{2},-),(\ydiagram{2,1},\ydiagram{1}))] \ . $$
\end{ex}

\begin{example} \label{Ex:Coxeter2}
  Here we use Theorem \ref{Thm:McKayGrpn} to calculate the McKay quivers of the Coxeter groups $G(p,p,2)$, also known as the dihedral groups $I_2(p)$, that is, the symmetry group of a regular $p$-gon. Note that the discriminant of $G(p,p,2)$ is an $A_{p-1}$-curve singularity with coordinate ring $R/(\Delta)$, where $R=K[[x,y]]$ and $\Delta=y^2+x^p$. Thus one already knows the McKay quiver: it is just the Auslander--Reiten quiver of $\CM(R/\Delta)$ (plus an extra vertex corresponding to the determinantal representation). These AR-quivers have been described e.g. in \cite{Yoshino,GonzalesSprinbergVerdier}.
  
Let now $p=2k+1$ for $k \geq 1$. There are $k+2$ irreducible representations of $G(2k+1,2k+1,2)$ and none of them has any symmetry. They are given by the following vectors $\lambda_1=[(\ydiagram{1,1} \ ,\underbrace{-}_{2k})]$, $\lambda_2=[(\ydiagram{2} \ ,\underbrace{-}_{2k})]$ and $\lambda_i=[(\ydiagram{1} \ ,\underbrace{-}_{i-3}, \ydiagram{1} \ , \underbrace{-}_{2k-i+2})]$ for $i=3, \ldots, k+2$. Note that $\lambda_{\stn}=\lambda_3$. One easily sees that the McKay quiver looks as follows: 
\[
\begin{tikzpicture}
\node at (4,0) {\begin{tikzpicture} 
\node (C1) at (0,1)  {$\lambda_1$};
\node (C1a) at (0,-1)  {$\lambda_2$};
\node (C2) at (1.75,0)  {$\lambda_{\stn}$};
\node (C3) at (3.5,0)  {$\lambda_4$};
\node (C4) at (5.25,0)  {$\cdots$};
\node (C5) at (7,0)  {$\lambda_{k+1}$};
\node (C6) at (8.75,0)  {$\lambda_{k+2}$};

\draw [->,bend left=20,looseness=1,pos=0.5] (C1) to node[]  {} (C2);
\draw [->,bend left=20,looseness=1,pos=0.5] (C2) to node[] {} (C1);

\draw [->,bend left=20,looseness=1,pos=0.5] (C1a) to node[]  {} (C2);
\draw [->,bend left=20,looseness=1,pos=0.5] (C2) to node[] {} (C1a);

\draw [->,bend left=25,looseness=1,pos=0.5] (C2) to node[]  {} (C3);
\draw [->,bend left=25,looseness=1,pos=0.5] (C3) to node[] {} (C2);

\draw [->,bend left=25,looseness=1,pos=0.5] (C3) to node[]  {} (C4);
\draw [->,bend left=25,looseness=1,pos=0.5] (C4) to node[] {} (C3);

\draw [->,bend left=25,looseness=1,pos=0.5] (C4) to node[]  {} (C5);
\draw [->,bend left=25,looseness=1,pos=0.5] (C5) to node[] {} (C4);

\draw [->,bend left=25,looseness=1,pos=0.5] (C5) to node[]  {} (C6);
\draw [->,bend left=25,looseness=1,pos=0.5] (C6) to node[] {} (C5);
\draw[->]  (C6) edge [in=25,out=-25,loop ,looseness=10,pos=0.5] node[left] {} (C6);

\end{tikzpicture}}; 
\end{tikzpicture}
\] 

Note that there is a loop at $\lambda_{k+2}$.

 Similarly, for $G(2k,2k,2)$ there are in total $k+3$ irreducible representations, and only two of them have a symmetry. The irreducible representations are 
 $$\lambda_1=[(\ydiagram{1,1} \ ,\underbrace{-}_{2k-1})], \lambda_2=[(\ydiagram{2} \ ,\underbrace{-}_{2k-1})] \text{ and } \lambda_i=[((\ydiagram{1} \ ,\underbrace{-}_{i-3}, \ydiagram{1} \ , \underbrace{-}_{2k-i+1})] \ , $$
  for $i=3, \ldots, k+1$ and moreover 
  $$\lambda_{k+2}=[(\underline{\ydiagram{1} \ ,\underbrace{-}_{k-1}}, \ydiagram{1} \ , \underbrace{-}_{k-1})] \text{ and }\lambda_{k+3}=[(\ydiagram{1} \ ,\underbrace{-}_{k-1}, \underline{\ydiagram{1} \ , \underbrace{-}_{k-1}})] \ . $$
   Again it is easy to see that the McKay quiver looks as follows: 
   \[
\begin{tikzpicture}
\node at (4,0) {\begin{tikzpicture} 
\node (C1) at (0,1)  {$\lambda_1$};
\node (C1a) at (0,-1)  {$\lambda_2$};
\node (C2) at (1.75,0)  {$\lambda_{\stn}$};
\node (C3) at (3.5,0)  {$\lambda_4$};
\node (C4) at (5.25,0)  {$\cdots$};
\node (C5) at (7,0)  {$\lambda_{k}$};
\node (C6) at (8.75,0)  {$\lambda_{k+1}$};
\node (C7) at (10.5,1)  {$\lambda_{k+2}$};
\node (C7a) at (10.5,-1)  {$\lambda_{k+3}$};

\draw [->,bend left=20,looseness=1,pos=0.5] (C1) to node[]  {} (C2);
\draw [->,bend left=20,looseness=1,pos=0.5] (C2) to node[] {} (C1);

\draw [->,bend left=20,looseness=1,pos=0.5] (C1a) to node[]  {} (C2);
\draw [->,bend left=20,looseness=1,pos=0.5] (C2) to node[] {} (C1a);

\draw [->,bend left=25,looseness=1,pos=0.5] (C2) to node[]  {} (C3);
\draw [->,bend left=25,looseness=1,pos=0.5] (C3) to node[] {} (C2);

\draw [->,bend left=25,looseness=1,pos=0.5] (C3) to node[]  {} (C4);
\draw [->,bend left=25,looseness=1,pos=0.5] (C4) to node[] {} (C3);

\draw [->,bend left=25,looseness=1,pos=0.5] (C4) to node[]  {} (C5);
\draw [->,bend left=25,looseness=1,pos=0.5] (C5) to node[] {} (C4);

\draw [->,bend left=25,looseness=1,pos=0.5] (C5) to node[]  {} (C6);
\draw [->,bend left=25,looseness=1,pos=0.5] (C6) to node[] {} (C5);

\draw [->,bend left=20,looseness=1,pos=0.5] (C6) to node[]  {} (C7);
\draw [->,bend left=20,looseness=1,pos=0.5] (C7) to node[] {} (C6);

\draw [->,bend left=20,looseness=1,pos=0.5] (C6) to node[]  {} (C7a);
\draw [->,bend left=20,looseness=1,pos=0.5] (C7a) to node[] {} (C6);

\end{tikzpicture}}; 
\end{tikzpicture}
\] 

\end{example}

These examples already suggest that the McKay graphs of $G(r,p,n)$ for $p>1$ have less symmetry than the ones for
$G(r,1,n)$. We make this more precise.

First note that the abelianization $G^{\mathrm{ab}}=G/[G,G]$ of
  $G(r,1,n)$ is $\mu_2 \times \mu_{r}.$ For $p|r$ we have that
  $G(r,1,n)^{\mathrm{ab}}$
  surjects onto $\mu_p$ and so $G(r,p,n)^{\mathrm{ab}} = \mu_2\times \mu_{r/p}$.

\begin{proposition}\label{symGroup}
  Let $\sfG \subseteq \GL(V)$ be a finite group with McKay quiver $\Gamma(\sfG)$.
  Then the group of one dimensional representations $\Hom(\sfG/[\sfG,\sfG],K^*)$ acts faithfully on $\Gamma(\sfG)$.
\end{proposition}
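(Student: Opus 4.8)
The plan is to make the action explicit and then deduce faithfulness from the behaviour at the trivial representation. First I would identify $\mathcal{L}:=\Hom(\sfG/[\sfG,\sfG],K^*)$ with the group, under $\otimes$, of isomorphism classes of one-dimensional $K\sfG$-modules: a one-dimensional representation $\sfG\to K^*$ factors uniquely through the abelianization, and two one-dimensional representations are isomorphic exactly when they are equal. An element $\chi\in\mathcal{L}$ acts on the vertex set $\Irrep(\sfG)$ of $\Gamma(\sfG)$ by $V_i\mapsto\chi\otimes V_i$; this is a permutation, since tensoring a simple $K\sfG$-module with a one-dimensional (hence invertible) module is again simple, with inverse $\chi^{-1}\otimes-$.

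Next I would check that this permutation extends to an automorphism of the quiver, i.e.\ that it preserves the arrow multiplicities $m_{ij}=\dim_K\Hom_{K\sfG}(V_{\stn}\otimes V_i,V_j)$. The number of arrows from $\chi\otimes V_i$ to $\chi\otimes V_j$ is $\dim_K\Hom_{K\sfG}(V_{\stn}\otimes(\chi\otimes V_i),\,\chi\otimes V_j)$; since $V_{\stn}\otimes(\chi\otimes V_i)\cong\chi\otimes(V_{\stn}\otimes V_i)$ and tensoring with the invertible module $\chi$ is an autoequivalence of $K\sfG$-modules (with inverse $-\otimes\chi^{-1}$), this equals $\dim_K\Hom_{K\sfG}(V_{\stn}\otimes V_i,V_j)=m_{ij}$. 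Equivalently, at the level of characters, $\langle\chi_{V_{\stn}}\,\chi\,\chi_{V_i},\,\chi\,\chi_{V_j}\rangle=\langle\chi_{V_{\stn}}\chi_{V_i},\chi_{V_j}\rangle$ because $\chi(g)\overline{\chi(g)}=\chi(g)\chi(g^{-1})=1$ for the linear character $\chi$. This produces a map $\Phi\colon\mathcal{L}\to\Aut(\Gamma(\sfG))$, and $\Phi$ is a group homomorphism because $(\chi\chi')\otimes V_i\cong\chi\otimes(\chi'\otimes V_i)$ for all $i$.

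Finally, faithfulness: suppose $\Phi(\chi)=\id_{\Gamma(\sfG)}$. Then $\chi$ fixes every vertex, in particular the vertex $V_{\trv}$ corresponding to the trivial representation; but $\chi\otimes V_{\trv}\cong\chi$, so $\chi\cong V_{\trv}$, i.e.\ $\chi$ is the identity of $\mathcal{L}$. Hence $\ker\Phi$ is trivial. I do not expect a genuine obstacle here: the only step requiring care is verifying that the vertex permutation is an automorphism of the quiver, not merely of its vertex set (the tensor/character computation above, which in positive characteristic relies on reading $\overline{\chi(g)}$ as $\chi(g^{-1})$); faithfulness itself is immediate once one recalls that $V_{\trv}$ is one of the vertices.
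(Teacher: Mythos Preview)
Your proposal is correct and follows essentially the same approach as the paper: identify $\Hom(\sfG/[\sfG,\sfG],K^*)$ with the one-dimensional representations under tensor product, let such a representation act on vertices by tensoring, observe that arrow multiplicities are preserved because $\Hom_{K\sfG}(U,W\otimes V)\cong\Hom_{K\sfG}(L\otimes U,L\otimes W\otimes V)$, and deduce faithfulness from the action on the trivial vertex. Your write-up is in fact more detailed than the paper's (you explicitly check the group-homomorphism property and note the positive-characteristic reading of $\overline{\chi(g)}$), but the argument is the same.
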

\begin{proof}
  Note first that $\Hom(\sfG/[\sfG,\sfG],K^*)$ is naturally the one dimensional representations of $\sfG$ with tensor product as group operation.
  Let $L$ be a one dimensional representation of $\sfG$. It acts on $\Gamma(\sfG)$
  by sending the irreducible representation $W$ to
  the irreducible representation $L\otimes W$.
  Since 
  $$\Hom_{K\sfG}(U,W\otimes V ) \cong \Hom_{K\sfG}(L\otimes U, L \otimes W\otimes V) \ , $$ this action preserves the arrows of $\Gamma(\sfG)$.  Also, if $K$ is the trivial
  representation, then $L\otimes K \cong K$ if and only if $L\cong K$ so the action is faithful.
\end{proof}

It would be interesting to answer the following question.
\begin{Qu}
  Describe the automorphism group of the McKay quiver $\Gamma(G(r,p,n)).$
\end{Qu}

\section{Lusztig algebras} \label{Sec:Lusztig}

In this section we describe the Lusztig algebra $\widetilde{A}(\sfG,W)$ of a finite group $\sfG$ over an algebraically closed field $K$ with respect to a $\sfG$-$K$-algebra $A$ and a $\sfG$-representation $W$. This algebra was considered by \cite[Section 6]{LusztigAdvances} in the context of quiver varieties. Recall that the basic algebra of $\sfG * A$ is the path algebra of the McKay quiver of $\sfG$ modulo relations.  The Lusztig algebra presents this basic algebra as an invariant ring of a matrix algebra
- this will be done explicitly for several examples in Section \ref{Sub:Examples}.

The study of Lusztig algebras $\widetilde{A}(\sfG,W)$ is interesting in its own right, as it provides some new algebras obtained from finite groups $\sfG$: for $W$ a representation generator, one obtains through varying the (Koszul) algebra $A$ different relations on the path algebra $KQ$ of the McKay quiver $Q$ of $\sfG$. These Lusztig algebras $\widetilde{A}(\sfG,W)$ seem to encode some not yet explored structures of the representation theory of $\sfG$. 

\subsection{Construction}

We fix a finite group $\sfG$ over an algebraically closed field $K$, with $\Irrep(\sfG)=\{V_i\}_{i=1}^r$.
We call $T=\oplus_{i=1}^{r}V_{i}$ the \emph{basic} $\sfG$--representation. It is unique up to 
isomorphism and the $K\sfG$--module structure is the obvious one coming from the action of 
$\prod_{i=1}^{r}\End_{K}(V_{i})$ on this direct sum. The term ``basic'' is chosen as by Schur's
Lemma there are algebra isomorphisms 
$\End_{K\sfG}(T)\cong \prod_{i=1}^{r}\End_{K\sfG}(V_{i})\cong K^{r}$, with 
componentwise operations on the latter.

The basic representation can be realized as a direct summand of the regular representation
$K\sfG\cong \oplus_{i}(V_{i}\otimes V^*_i)$ of $\sfG$, however, there is generally no canonical 
such choice. On each summand $\End_{K}(V_{i})\cong V_{i}\otimes V_{i}^{*}$ of the regular representation,
$\sfG$ acts though the left factor and singling out a copy of $V_{i}$ is equivalent to 
choosing a nonzero element $\lambda_{i}\in V_{i}^{*}$, equivalently, a projection
on $V_{i}^{*}$ with one--dimensional image. Such a projection, 
viewed in $K\sfG$, is represented 
by an idempotent $e_{i}=e_{\lambda_{i}}\in K\sfG$ so that $V_{i}\cong K\sfG e_{i}$ as
$\sfG$--representation. For $i\neq j$, the chosen idempotents are orthogonal, whence
$e=\sum_{i}e_{i}\in K\sfG$ is still idempotent and $T\cong K\sfG(\sum_{i}e_{i})$.

As $T$ contains a copy of every irreducible representation (up to isomorphism) as a direct summand,
it represents a progenerator for $K\sfG$ and affords thus the Morita equivalence between
$K\sfG$ and $\End_{K\sfG}(T)\cong K^{r}$ (cf.~e.g.~\cite[\S 5]{McConnellRobson}). Note that this in particular means that $K{\sfG}eK{\sfG}=K\sfG$.

Consider a $\sfG$--$K$--algebra $A$, that is a $K$--algebra $A$ together with 
a group homomorphism $\alpha\colon \sfG\to \Aut_{K-alg}(A)$. 
We denote by $A*\sfG$ the corresponding skew group algebra of $A$ on $\sfG$.
The multiplication for any $ag, a'g' \in A*\sfG$ is given as
$$ ag \cdot a'g' = a\alpha(g)(a') (gg') \ , $$ and then linearly extended to any elements in this ring. If $\alpha$ is understood, we shorten the notation to $g(a)$ instead of $\alpha(g)(a)$. 
Note that $A*\sfG \cong \sfG *A$ since $ag=g g^{-1}(a)$. 

Let $f:K\sfG\to \sfG*A$ denote the canonical $K$--algebra homomorphism from the group
algebra on $\sfG$ to the twisted group algebra that sends $g\in \sfG$ to $g\otimes 1=g$
in $\sfG*A$.

As for any ring homomorphism, $f$ defines an adjoint triple of functors between the 
respective module categories. (We prefer to deal with right modules.)
\begin{align*}
f_{*}&\colon\Mod \sfG*A\lto \Mod K\sfG&&\text{is the forgetful functor,}\\
f^{*}&\colon \Mod K\sfG\lto \Mod \sfG*A &&\text{is the left adjoint to $f_{*}$,}\\
f^{!}&\colon \Mod K\sfG\lto \Mod \sfG*A &&\text{is the right adjoint to $f_{*}$,}
\end{align*}
Because $K\sfG$ is semisimple, these functors are quite simple. We note the following.

\begin{lemma}
For every (right) $K\sfG$--module $W$ one has 
\begin{align*}
f^{*}(W) = W\otimes_{K\sfG}\sfG*A\cong W\otimes A
\end{align*}
with the right $\sfG*A$--module structure determined by
$(w\otimes a)(g\otimes a') = (w\cdot g)\otimes \alpha(g^{-1})(a)a'$.

The $\sfG*A$--modules $f^{*}W$ are projective and $f^{*}(\oplus_{i}V_{i})$ is a progenerator.

\end{lemma}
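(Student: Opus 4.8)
The plan is to establish the three assertions in turn --- the explicit model for $f^{*}(W)$, projectivity, and the progenerator property --- each of which reduces to a formal property of extension of scalars along $f\colon K\sfG\to\sfG*A$ together with the semisimplicity of $K\sfG$. For the first, recall that the left adjoint to restriction of scalars along a ring map is the associated tensor functor, so $f^{*}(W)=W\otimes_{K\sfG}(\sfG*A)$ by definition. To identify this with $W\otimes A$, the key point is that $\sfG*A$ is \emph{free} as a left $K\sfG$-module: choosing a $K$-basis of $A$, the elements $1\otimes a$ form a left $K\sfG$-basis, so $g\otimes a\mapsto g\otimes a$ is a left $K\sfG$-module isomorphism $K\sfG\otimes_{K}A\xrightarrow{\sim}\sfG*A$. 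Composing with the canonical isomorphisms $W\otimes_{K\sfG}(K\sfG\otimes_{K}A)\cong(W\otimes_{K\sfG}K\sfG)\otimes_{K}A\cong W\otimes A$ gives an isomorphism $f^{*}(W)\cong W\otimes A$ sending $w\otimes(g\otimes a)\mapsto(w\cdot g)\otimes a$. It then remains to transport the right $\sfG*A$-action: expanding $(w\otimes(g\otimes a))\cdot(g'\otimes a')=w\otimes\bigl((g\otimes a)(g'\otimes a')\bigr)$, simplifying the product $(g\otimes a)(g'\otimes a')=gg'\otimes\alpha(g'^{-1})(a)a'$ in $\sfG*A$, and pushing through the isomorphism recovers exactly the displayed formula. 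I expect this last bookkeeping --- keeping the twisted multiplication and the left/right conventions straight --- to be the only mildly delicate point; everything else in this step is formal.

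For projectivity, the quickest argument is that $f^{*}$, being left adjoint to the exact restriction functor $f_{*}$, preserves projectives; since $K\sfG$ is semisimple every $K\sfG$-module is projective, so $f^{*}(W)$ is projective. Concretely: semisimplicity makes $W$ a direct summand of a free $K\sfG$-module, and applying the additive functor $f^{*}$ together with $f^{*}(K\sfG)\cong\sfG*A$ exhibits $f^{*}(W)$ as a direct summand of a free $\sfG*A$-module.

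For the progenerator claim, the input is that the basic representation $T=\bigoplus_{i}V_{i}$ is a progenerator for $K\sfG$, as already noted above: it is a direct summand of the regular module $K\sfG$, and conversely $K\sfG$, as a right module over itself, decomposes as $\bigoplus_{i}V_{i}^{\dim V_{i}}$ and hence is a direct summand of a finite power of $T$. Applying the additive functor $f^{*}$ and using $f^{*}(K\sfG)\cong\sfG*A$ transports both summand relations: $f^{*}(T)$ is a direct summand of a free $\sfG*A$-module (re-proving projectivity), and --- the point at hand --- the regular module $\sfG*A$ is a direct summand of a finite power of $f^{*}(T)=f^{*}(\bigoplus_{i}V_{i})$, so the latter is a generator. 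Being both projective and a generator, $f^{*}(\bigoplus_{i}V_{i})$ is a progenerator. (One could instead argue directly from $K\sfG=K\sfG eK\sfG$ with $e=\sum_{i}e_{i}$, but the transfer argument is cleaner.) Beyond the conventions flagged in the first paragraph there is no genuine obstacle here; the substance of the lemma is the identification of $f^{*}$ carried out there.
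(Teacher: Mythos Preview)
Your proof is correct. The paper itself states this lemma without proof, so there is nothing to compare against: you have supplied the details the authors left implicit. Your derivation of the twisted right action matches the paper's conventions for $\sfG*A$ (where $(g\otimes a)(g'\otimes a')=gg'\otimes\alpha(g'^{-1})(a)a'$), and the projectivity and progenerator arguments via semisimplicity of $K\sfG$ and additivity of $f^{*}$ are exactly the expected ones.
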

\begin{sit}
Attach to such a triple $(\sfG, A, W)$ the $K$--algebra $\End_{K}(W)\otimes A$, tensor product of two 
$K$--algebras with the usual induced multiplication coming from the two factors.
In other words, the multiplication is not twisted as, say, in the case of $A*\sfG$, the twisted
tensor product of $A$ and $K\sfG$.

The group $\sfG$ acts (from the left) on this tensor product through $K$--algebra automorphisms
in the obvious way:
\begin{align*}
g(\vp\otimes a) &= (\rho(g)\vp\rho(g)^{-1})\otimes \alpha(g)(a)\,,
\end{align*}
where $\rho(g)\vp\rho(g)^{-1}(w) = \rho(g)(\vp(\rho(g)^{-1}(w)))$ is the transformed 
$K$--linear endomorphism of $\vp$ in $\End_{K}(W)$ defined by the standard $\sfG$--action on 
that endomorphism ring.

\end{sit}

\begin{defi} \label{Def:LusztigAlg}
Given a triple $(\sfG, A, W)$, we associate to it the $K$--algebra
\[
\widetilde A(\sfG,W) =\left(\End_{K}(W)\otimes A\right)^{\sfG}\,
\]
where the $\sfG$--fixed points are with respect to the action of $\sfG$ just described.

If $T=\oplus_{i=1}^{r}V_{i}$ is the direct sum of a set of representatives of the 
isomorphism classes of finite dimensional irreducible $\sfG$--representations of $\sfG$, then we call
\begin{align*}
\widetilde A(\sfG) = \widetilde A(\sfG,T)
\end{align*}
the \emph{Lusztig algebra\/} of $\sfG$ on $A$. 
\end{defi}

This algebra was first described by Lusztig \cite[6.1]{LusztigAdvances}.

As a first indication as to the size of these algebras, note the following.
\begin{lemma}
For every triple $(\sfG,A,W)$, the $K$--algebra $\End_{K\sfG}(W)\otimes A^{\sfG}$ is a
subalgebra of $\widetilde A(\sfG,W)$. \qed
\end{lemma}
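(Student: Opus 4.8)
The statement to prove is that $\End_{K\sfG}(W)\otimes A^{\sfG}$ embeds as a subalgebra of $\widetilde A(\sfG,W)=(\End_K(W)\otimes A)^{\sfG}$.

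The plan is to exhibit $\End_{K\sfG}(W)\otimes A^{\sfG}$ as the image of the obvious map and check it lands inside the $\sfG$-fixed points. First I would observe that $\End_{K\sfG}(W)$ is precisely the set of $\sfG$-invariant elements of $\End_K(W)$ under the conjugation action $\vp\mapsto\rho(g)\vp\rho(g)^{-1}$: indeed $\vp$ commutes with every $\rho(g)$ exactly when $\rho(g)\vp\rho(g)^{-1}=\vp$ for all $g\in\sfG$. Likewise $A^{\sfG}$ is by definition the set of $a\in A$ with $\alpha(g)(a)=a$ for all $g$. So both tensor factors already consist of fixed elements, and there is a natural $K$-algebra homomorphism $\End_{K\sfG}(W)\otimes A^{\sfG}\to\End_K(W)\otimes A$ given by inclusion on each factor (this is a ring homomorphism because the multiplication on $\End_K(W)\otimes A$ is the untwisted tensor product, so it restricts to the untwisted tensor product on the subalgebras).

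Next I would check that the image lands in the invariants: for $\vp\in\End_{K\sfG}(W)$ and $a\in A^{\sfG}$, the formula for the $\sfG$-action gives $g(\vp\otimes a)=(\rho(g)\vp\rho(g)^{-1})\otimes\alpha(g)(a)=\vp\otimes a$, so every elementary tensor is fixed, hence so is every element of the image. Thus the map factors through $\widetilde A(\sfG,W)=(\End_K(W)\otimes A)^{\sfG}$. Finally I would argue injectivity: since $K$ is a field, $\End_{K\sfG}(W)$ and $A^{\sfG}$ are $K$-vector spaces, the tensor product over $K$ of the inclusions $\End_{K\sfG}(W)\hookrightarrow\End_K(W)$ and $A^{\sfG}\hookrightarrow A$ is again injective (tensoring injections of vector spaces over a field preserves injectivity), so the composite into $\widetilde A(\sfG,W)$ is injective and identifies $\End_{K\sfG}(W)\otimes A^{\sfG}$ with a subalgebra.

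There is essentially no obstacle here; the only point that requires a half-sentence of care is the injectivity of the tensored inclusion, which is immediate over a field, and the fact that the untwisted multiplication restricts correctly — both are routine. I would present this as a short paragraph rather than a displayed chain of equations, since the content is just unwinding the two definitions and noting that fixed-point conditions on the factors force fixedness of the product.
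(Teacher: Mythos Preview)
Your argument is correct and is precisely the verification one would write out: identify $\End_{K\sfG}(W)$ as the conjugation-invariants in $\End_K(W)$, note that the tensor of two invariant elements is invariant under the diagonal action, and observe that the inclusion of tensor products is injective over a field and multiplicative because the product on $\End_K(W)\otimes A$ is untwisted. The paper itself offers no proof---the lemma is stated with a bare \qed---so your write-up is exactly the routine check the authors leave implicit.
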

By Schur's Lemma, if $W\cong \oplus_{i=1}^{r}V_{i}^{m_{i}}$ is a decomposition of 
$W$ as direct sum of the different irreducible $\sfG$--representations with multiplicities, then 
$\End_{K\sfG}(W)
\cong 
\prod_{i}\Mat_{m_{i}\times m_{i}}(K)$ as
$K$--algebras.

We give different representations of these algebras.

\begin{sit}
First note that as $K$--vector spaces one has natural isomorphisms
\begin{equation}
\label{dis1}
\End_{K}(W)\otimes A\cong \Hom_{K}(W, W\otimes A)\cong W\otimes A\otimes W^{*}
\end{equation}
as $W$ is a finite dimensional $K$--vector space. 

If $\dim_{K}W=d$, then $W\otimes A\otimes W^{*}$ 
can be identified with the set of $d\times d$ matrices $\Mat_{d\times d}(A)$ 
with entries from $A$ in that such a matrix $(a_{ij})_{i,j=1,...d}$ represents 
$\sum_{i,j}w_{i}\otimes a_{ij}\otimes w_{j}^{*}$ for a fixed ordered basis 
$(w_{1},..., w_{d})$ of $W$ and its dual basis $(w_{1}^{*},..., w_{d}^{*})$ of $W^{*}$.
The multiplication is then just the standard multiplication of matrices,
\[
(a_{ij})_{i,j=1,...d}(a'_{jk})_{j,k=1,...d} =\left(\sum_{j=1}^{d}a_{ij}a'_{jk}\right)_{_{i,k=1,...d}}\,.
\] 
On these matrices, the action of $g\in \sfG$ is given by
\begin{equation} \label{eq:Gaction}
g(a_{ij})= \rho(g)(\alpha(g)(a_{ij}))\rho(g)^{-1}\,.
\end{equation}
Even more explicitly: 
\end{sit}

\begin{sit}
If $M=(a_{ij})$ is in $\Mat_{d\times d}(A)$, then for every $g\in \sfG$ taking traces $\tr$ yields that
$\tr(g(M)) = \alpha(g)(\tr(M))$. 
Accordingly, if the matrix is in the fixed ring  $\Mat_{d\times d}(A)^{\sfG}$, then the 
trace of $M$ must be an invariant element of $A$. Using that $d=\dim W$ is invertible in $K$,
the matrix $M'=M-\frac{1}{d}\tr(M)\id_{d}$ is in $\frak{sl}(d,A)^{\sfG}$, the 
invariant traceless $d\times d$ matrices over $A$. Conversely, $M$ is invariant if $M'$ is
and $\tr(M)\in A^{\sfG}$. 
\end{sit}

For determining $\Mat_{d\times d}(A)^{\sfG}$ it thus suffices to determine 
$A^{\sfG}$ and $\frak{sl}(d,A)^{\sfG}$.

\begin{sit}
The isomorphisms in (\ref{dis1}) above imply
\begin{equation} \label{Eq:G-invEndos}
\widetilde A(\sfG,W) = (\End_{K}(W)\otimes A)^{\sfG} \cong \Hom_{K\sfG}(W, W\otimes A)\,.
\end{equation}

In a basis free manner, if $\vp',\vp\colon W\to W\otimes A$ are $K$--linear or $K\sfG$--linear, 
then their product is given by the composition (cf.~\cite[6.1]{LusztigAdvances})
\begin{align*}
\vp'\vp\equiv W\xto{\ \vp\ } W\otimes A\xto{\ \vp'\otimes\id_{A}\ }W\otimes A\otimes A
\xto{\ \id_{W}\otimes \mu\ }W\otimes A\,,
\end{align*}
where $\mu:A\otimes A\to A$ is the multiplication on $A$.

Indeed, with respect to a chosen basis of $W$ as before, write
$\vp(w_{k})= \sum_{j}w_{j}\otimes a_{jk}$ and 
$\vp'(w_{j})= \sum_{i}w_{i}\otimes a'_{ij}$ with uniquely determined elements $a_{ij}, a'_{ij}\in A$. 
One then has
\begin{align*}
(\id_{W}\otimes \mu)(\vp'\otimes \id_{A})\vp(w_{k})&=
(\id_{W}\otimes \mu)(\vp'\otimes \id_{A})(\sum_{j}w_{j}\otimes a_{jk})\\
&=(\id_{W}\otimes \mu)\left(\sum_{j}\vp'(w_{j})\otimes a_{jk} \right)\\
&=(\id_{W}\otimes \mu)\left(\sum_{j}\left(\sum_{i}w_{i}\otimes a'_{ij}\right)\otimes a_{jk} \right)\\
&=\sum_{i}w_{i}\otimes \sum_{j}a'_{ij}a_{jk}\,.
\end{align*}
\end{sit}

\begin{sit}
By definition, $K\sfG$ acts from the left on $W\otimes A$, while $A$ acts naturally from the right.
These actions combine to endow $W\otimes A$ with a right $A*\sfG$--module structure and 
one has the following.
\end{sit}

\begin{proposition}
There is an isomorphism of $K$--algebras 
\[
\End_{A*\sfG}(W\otimes A)\xto{\ \cong\ } 
(\End_{K}(W)\otimes A)^{\sfG}\,.
\]
\end{proposition}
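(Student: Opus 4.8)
The plan is to exhibit the isomorphism explicitly and check it is an algebra homomorphism with an obvious inverse. The key observation is that $W \otimes A$ is a free $A$-module of rank $d = \dim_K W$, so any $A$-linear endomorphism of it is given by left multiplication by a $d \times d$ matrix over $A$ acting on column vectors; i.e., $\End_A(W \otimes A) \cong \Mat_{d\times d}(A)$ as $K$-algebras, via the chosen basis $(w_1, \ldots, w_d)$ of $W$. Under this identification I would trace through what it means for such an endomorphism additionally to commute with the left $K\sfG$-action, i.e., to be $A*\sfG$-linear. Since $A*\sfG$ is generated as a ring by $A$ and $\sfG$, an $A$-linear endomorphism $\psi$ of $W \otimes A$ is $A*\sfG$-linear precisely when it commutes with the action of each $g \in \sfG$.

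Next I would compute that commutation condition in coordinates. The right $A*\sfG$-module structure on $W\otimes A$ (from the Lemma on $f^*$) has $g \in \sfG$ acting by $(w \otimes a) \cdot g = (w \cdot g) \otimes \alpha(g^{-1})(a)$, equivalently on the left by $\rho(g)$ on $W$ together with $\alpha(g)$ on $A$. Writing $\psi$ as the matrix $M = (a_{ij})$, the requirement that $\psi$ commute with this $g$-action unwinds exactly to $\rho(g)\,\alpha(g)(M)\,\rho(g)^{-1} = M$, i.e., $g(M) = M$ in the sense of \eqref{eq:Gaction}. Hence the $A*\sfG$-linear endomorphisms correspond precisely to the $\sfG$-fixed matrices $\Mat_{d\times d}(A)^{\sfG}$, which by the identifications in \eqref{dis1} is $(\End_K(W)\otimes A)^{\sfG} = \widetilde A(\sfG, W)$. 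That the correspondence is a ring homomorphism is immediate: composition of $A$-linear endomorphisms corresponds to matrix multiplication, which is the multiplication on $(\End_K(W)\otimes A)$, and it restricts to the multiplication on the fixed subalgebra. The map is clearly bijective onto its image since the matrix representation of an $A$-linear endomorphism in a fixed basis is a bijection, and we have just identified the image as the fixed subalgebra. One should also note the map is $K$-algebra, not merely ring: it is $K$-linear since $K$ acts centrally and the basis identification is $K$-linear.

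The main (only real) obstacle is bookkeeping: getting the variance conventions right so that "commutes with the left $g$-action on $W \otimes A$" translates to the specific conjugation-plus-twist formula \eqref{eq:Gaction} rather than its inverse or transpose. Concretely, I would write a general $A$-linear $\psi$ as $\psi(w_k \otimes 1) = \sum_j w_j \otimes a_{jk}$, apply $g$ before and after, and match coefficients; since $g$ acts on $W$ by $\rho(g)$ and on the $A$-factor by $\alpha(g)$, the condition $\psi(g \cdot (w_k\otimes 1)) = g\cdot \psi(w_k \otimes 1)$ yields $\rho(g)^{-1} M \,\rho(g) = \alpha(g)^{-1}(M)$ (or its inverse, depending on side conventions), which rearranges to $g(M) = M$. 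This is routine once the conventions of the preceding \texttt{sit} blocks are adopted verbatim, so I would simply cite those displays. Alternatively, and perhaps more cleanly, one can avoid coordinates entirely: the composite $\End_{A*\sfG}(W\otimes A) \hookrightarrow \End_A(W\otimes A) \cong \End_K(W)\otimes A$ (the last iso because $W\otimes A$ is $A$-free of finite rank) has image exactly the $\sfG$-invariants, because $\psi \in \End_A(W\otimes A)$ is $\sfG$-equivariant iff $g^{-1}\psi g = \psi$ for all $g$, and under the iso the conjugation action $g^{-1}(-)g$ is precisely the $\sfG$-action on $\End_K(W)\otimes A$ defined before \eqref{eq:Gaction}. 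I would present this basis-free version as the main argument and relegate the matrix computation to a remark.
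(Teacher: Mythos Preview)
Your proposal is correct and takes essentially the same approach as the paper. The paper's proof is a single sentence: restrict an $A*\sfG$-endomorphism $\varphi\colon W\otimes A\to W\otimes A$ to $W\otimes 1$ to obtain an element of $\Hom_{K\sfG}(W,W\otimes A)\cong(\End_K(W)\otimes A)^{\sfG}$, which is exactly your basis-free argument (the $A$-linearity lets you recover $\varphi$ from its restriction, and the $\sfG$-equivariance gives $K\sfG$-linearity of the restriction); your matrix computation is just the coordinate version of this, and the algebra compatibility was already set up in the preceding displays.
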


\begin{proof}
Just restrict an endomorphism $\vp:W\otimes A\to W\otimes A$ to $W\otimes 1$ to get
the corresponding element of $\Hom_{K\sfG}(W,W\otimes A)\cong (\End_{K}(W)\otimes A)^{\sfG}$.
\end{proof}
Note that for any $W$, the $A*\sfG$--module $W\otimes A$ is a right projective $A*\sfG$--module.
If $T$ is a finite dimensional $K$--linear $\sfG$--representation that contains a 
representative of every irreducible $\sfG$--representation as a direct summand, then
$T\otimes A$ is a projective generator for $A*\sfG$ and thus we conclude:
\begin{corollary} \label{Cor:LusztigMoritaequiv}
If $T$ is a finite dimensional $K$--linear $\sfG$--representation that contains a 
representative of every irreducible $\sfG$--representation as a direct summand, then the
associated Lusztig algebra is Morita equivalent to $A*\sfG$.
\end{corollary}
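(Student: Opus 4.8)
The plan is to deduce this from the Proposition just above together with the classical Morita theorem: for a ring $R$ and a finitely generated projective generator (progenerator) $P$ in $\Mod R$, the functor $\Hom_{R}(P,-)$ induces an equivalence between $\Mod R$ and $\Mod \End_{R}(P)$, see e.g.~\cite[\S 5]{McConnellRobson}. We apply this with $R=A*\sfG$ and $P=T\otimes A$. The Proposition already supplies the identification $\End_{A*\sfG}(T\otimes A)\cong(\End_{K}(T)\otimes A)^{\sfG}=\widetilde A(\sfG)$, so it only remains to verify that $T\otimes A$ is a progenerator for $\Mod A*\sfG$.

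First, $T\otimes A\cong f^{*}(T)$, and by the Lemma describing $f^{*}$ this is a projective $A*\sfG$--module: the $K\sfG$--module $T$ is projective (indeed semisimple, as $K\sfG$ is), and $f^{*}$, being left adjoint to the forgetful functor $f_{*}$, preserves projectives. It is finitely generated over $A*\sfG$ because $\dim_{K}T<\infty$. For the generator property, use the hypothesis to write $T\cong\bigoplus_{i=1}^{r}V_{i}^{m_{i}}$ with every $m_{i}\geq 1$; then $f^{*}(\bigoplus_{i}V_{i})$ is a direct summand of $f^{*}(T)=T\otimes A$, and $f^{*}(\bigoplus_{i}V_{i})$ is a progenerator by the Lemma, hence in particular a generator. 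A module that has a generator as a direct summand is itself a generator, so $T\otimes A$ is a generator, and therefore a progenerator.

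Combining these, $\widetilde A(\sfG)=(\End_{K}(T)\otimes A)^{\sfG}\cong\End_{A*\sfG}(T\otimes A)$ is Morita equivalent to $A*\sfG$. I do not expect a genuine obstacle here: the Proposition, the structure of $f^{*}$, and the observation that $T\otimes A$ is a projective generator have all been prepared beforehand, so the argument is essentially an assembly of these facts. The only point that merits a word of care is the implication ``a direct summand containing a generator is again a generator'', which is immediate from the characterisation of a generator as a module of which every object of $\Mod A*\sfG$ is an epimorphic image of some direct sum of copies.
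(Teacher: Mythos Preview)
Your argument is correct and follows exactly the route the paper takes: the Proposition identifies $\widetilde A(\sfG)$ with $\End_{A*\sfG}(T\otimes A)$, and the surrounding discussion (that $W\otimes A$ is always projective over $A*\sfG$ and that $T\otimes A$ is a projective generator when $T$ contains every irreducible) supplies the progenerator hypothesis for Morita's theorem. Your only addition is spelling out explicitly why a module containing a generator as a summand is again a generator, which the paper leaves implicit.
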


\begin{Bem} \label{Rmk:Computationofdeg1}
In order to calculate $\widetilde{A}(\sfG,W)$ explicitly as a matrix algebra, we use \eqref{eq:Gaction} and \eqref{Eq:G-invEndos}: Let $d=\dim(W)$ and $M=(a_{ij})_{i,j=1, \ldots, d}$ be an element of $\End_K(W)\otimes A$. Then $M \in (\End_K(W) \otimes A)^\sfG$ if and only if
$$g(a_{ij})=\rho(g)(\alpha(g)(a_{ij}))\rho(g)^{-1} = a_{ij} \ \text{for all } i,j=1, \ldots, d \ , $$
where $\rho$ is the character of $W$. We write shorthand $M=\rho(g)(g(M))\rho(g)^{-1}$.

In particular, if $W=T=\sum_{i=1}^rV_i$, then $d=\sum_{i=1}^r \dim(V_i)$ and we may write $\rho=\sum_{i=1}^r \rho_i$ as the sum of irreducible representations. Further, one can calculate the $\dim(V_i) \times \dim(V_j)$ blocks $M^{ij}$ of $M$ corresponding to $\Hom_{K\sfG}(V_j, V_i \otimes A)$ via 
$$ M^{ij}=\rho_i(g) g(M^{ij}) \rho_j(g)^{-1} \ ,$$
for all $g \in \sfG$. Moreover, it is clearly enough to check these equalities on generators of $\sfG$. See Section \ref{Sub:Examples} for examples.
\end{Bem}

  \begin{Bem} The computation of the entries in the Lusztig algebra of $\sfG$ is reminiscent of Gutkin--Opdam matrices $M$ for a pseudo-reflection group $\sfG$  (cf. \cite[Section 4.5.2, p.80]{Brouebook}, also cf.~\cite{Beck, Opdam} : there $(K[V]\otimes W^*)^\sfG$ is calculated, and the matrix $M$ has to satisfy the relation \cite[Theorem 4.38 (i)]{Brouebook} $g(M)=\rho_W(g)M$ for all $g \in \sfG$. That construction gives a generalization of Stanley's semi-invariants for linear characters to characters of higher rank. In the special case when
    $W$ has the form $V \simeq \End_K(V) \simeq V\otimes V^*$ for some $\sfG$ representation $V$ and $A=K[V],$ then  these matrices are the matrices from Remark \ref{Rmk:Computationofdeg1}.
\end{Bem}

\subsection{Quiver and relations for $\widetilde{A}(\sfG)$} \label{Sub:QuiverRelationsKoszul}

In the following we keep the notations from above: $\sfG\subseteq \GL(V)$ is a finite group acting on $V$ and we denote the basic $\sfG$-representation by $T$. In order to be able to calculate the Lusztig algebra $\widetilde{A}(\sfG)$, we will assume that the $\sfG-K$-algebra $A$ is Koszul. We recall some notions of Koszul algebras, following mostly \cite{BeilinsonGinzburgSoergel}. 
A positively graded $K$-algebra $A=\bigoplus_{i \geq 0} A_i$ is called \emph{Koszul} if (1) $A_0$ is finite dimensional semi-simple and (2) $A_0$ considered as a graded (left) 
$A$-module admits a graded projective resolution
$$ \cdots \xrightarrow{} P_2 \xrightarrow{} P_1 \xrightarrow{} P_0 \xrightarrow{} A_0 \xrightarrow{} 0 \ ,$$
such that $P_i$ is generated in degree $i$. It is known \cite[Cor.~2.3.3]{BeilinsonGinzburgSoergel} that any Koszul ring is quadratic, that is, there is an isomorphism of graded algebras
$$A \cong T_{A_0}(V)/I=A_0 \oplus V \oplus (V \otimes V) \oplus \cdots \ , $$
where $V$ is an $A_0$-bimodule placed in degree $1$ and $I$ is is a two-sided ideal generated in degree $2$: $I=\langle M \rangle$, where $M$ is an $A_0$-bimodule in $V \otimes V$.
For example, let $Q$ be a quiver with vertices $Q_0$ and arrows $Q_1$.  Then let $A_0 = \prod_{Q_0} K$.  Then
  $M = \oplus_{Q_1}K$ is an $A_0$-bimodule and $kQ = T_{A_0} M$ graded by path length.

\begin{example} (1) $T_K(V)\cong K\langle x_1, \ldots, x_n \rangle$, the tensor algebra of the vector space $V$ (with basis $x_1, \ldots, x_n$ over a field $K$), is Koszul.  $T_K(V)$ is the free algebra in $n$ variables over $K$.
  
(2) $S=\Sym_K(V)\cong K[x_1, \ldots, x_n]$ is Koszul: it can be described as $T_K(V)/I$, where again $x_1, \ldots, x_n$ denotes the basis of $V$ and $I$ is the two-sided ideal generated by the commutators $x_ix_j-x_jx_i$. The Koszul complex of $K$ yields the linear graded projective resolution.  \\
(3) The exterior  algebra $\bigwedge_K(V^*)$ is Koszul: it can be described as $T_K(V^*)/I$, where $x^*_1, \ldots, x^{*}_n$ denote the dual coordinates on $V^*$ and $I$ is the ideal generated by $(x^{*}_i)^2$ for $i=1, \ldots, n$ and $x^*_ix^*_j + x^*_jx^*_i$ for $i \neq j$. In fact, it is well-known that $\bigwedge_K(V^*)$ is the Koszul dual to $\Sym_K(V)$, see e.g., \cite{Froeberg}. In the following examples we have $V\cong V^*$ as $\sfG$-representations, so we will sloppily write $\bigwedge_K(V)$ and not change to dual coordinates.
\end{example}

These three examples will be the paradigms for the algebra $A$. Moreover, if $A=S=\Sym_KV$ or $A=S^{!}=\bigwedge_K(V)$, then the skew group algebra $A*\sfG$ is Koszul. Note that for $S$ this is well-known and for $\bigwedge_K(V)$ a short proof can be found in \cite[Section 6.2,Cor.~3]{HuerfanoKhovanov}.  Moreover: if $A=T_K(V)$, then $A*\sfG$ is  Morita equivalent to the path algebra $KQ$ of the McKay quiver $Q=\Gamma(\sfG)$, see \cite[Thm.~1.3]{GuoMartinez}. These observations combine to the following

\begin{Thm} \label{Thm:LusztigAlgKoszul}
Let $A=T_K(V)/I$ be a positively graded algebra with $I \subseteq \bigoplus_{i \geq 2}T_K(V)_{i}$ and $\sfG$ be a finite subgroup of $\GL(V)$. Then $A*\sfG$ is Morita equivalent to a path algebra $KQ/\langle I \rangle$, where $Q$ is the McKay quiver of $\sfG$ and $\langle I \rangle$ is the two-sided ideal in $KQ$ induced by the relations of $I$. In particular: the Lusztig algebra $\widetilde A(\sfG)\cong KQ/\langle I \rangle$ and it is Koszul if $A$ is Koszul. 
\end{Thm}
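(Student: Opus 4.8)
The plan is to reduce to the case $A=T_K(V)$, where the statement is the cited result \cite[Thm.~1.3]{GuoMartinez}, and to transport it along the exact functor $\Mat_{d\times d}(-)^{\sfG}$. Since $A*\sfG$ is a $\sfG$--$K$--algebra, $I$ is necessarily a $\sfG$--stable ideal of $T_K(V)$, so $\sfG$ acts on $T_K(V)$ fixing $I$ and $A*\sfG\cong(T_K(V)*\sfG)/I(T_K(V)*\sfG)$. Put $d=\dim_K T$. By \eqref{dis1} and \eqref{Eq:G-invEndos}, for every $\sfG$--$K$--algebra $B$ there is a natural identification $\widetilde B(\sfG)=\Mat_{d\times d}(B)^{\sfG}\cong\Hom_{K\sfG}(T,T\otimes B)$, and the same formulas define $\Mat_{d\times d}(-)^{\sfG}\cong\Hom_{K\sfG}(T,T\otimes-)$ on $\sfG$--equivariant $B$--bimodules; as $K\sfG$ is semisimple, this functor is exact.

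For $A=T_K(V)$ the right $T_K(V)*\sfG$--module $T\otimes T_K(V)=\bigoplus_i(V_i\otimes T_K(V))$ is a \emph{basic} projective generator (it is the sum, with multiplicity one, of the indecomposable projectives $V_i\otimes T_K(V)$), so $\widetilde{T_K(V)}(\sfG)=\End_{T_K(V)*\sfG}(T\otimes T_K(V))$ is the basic algebra of $T_K(V)*\sfG$. By \cite[Thm.~1.3]{GuoMartinez} the latter is Morita equivalent to the path algebra $KQ$ of the McKay quiver $Q=\Gamma(\sfG)$, which is itself basic, so $\widetilde{T_K(V)}(\sfG)\cong KQ$.

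Applying $\Mat_{d\times d}(-)^{\sfG}$ to the short exact sequence $0\to I\to T_K(V)\to A\to 0$ of $\sfG$--equivariant bimodules yields a surjection of $K$--algebras $\pi\colon KQ\cong\widetilde{T_K(V)}(\sfG)\twoheadrightarrow\widetilde A(\sfG)$ whose kernel is the two--sided ideal $\langle I\rangle:=\Mat_{d\times d}(I)^{\sfG}\cong\Hom_{K\sfG}(T,T\otimes I)$ of $KQ$; thus $\widetilde A(\sfG)\cong KQ/\langle I\rangle$, and by Corollary~\ref{Cor:LusztigMoritaequiv} this algebra is Morita equivalent to $A*\sfG$. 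To see that $\langle I\rangle$ is the ideal \emph{induced by the relations of $I$}, write $I$ as the two--sided ideal of $T_K(V)$ generated by a $\sfG$--stable graded subspace $R\subseteq\bigoplus_{k\ge2}V^{\otimes k}$. One checks that the exact functor $\Hom_{K\sfG}(T,T\otimes-)$ intertwines the multiplication of $T_K(V)$ with that of $KQ$ --- in degree $n$ this is the familiar decomposition $\Hom_{K\sfG}(V_j,V_i\otimes V^{\otimes n})\cong\bigoplus_{\ell_1,\dots,\ell_{n-1}}\Hom_{K\sfG}(V_{\ell_1},V_i\otimes V)\otimes\cdots\otimes\Hom_{K\sfG}(V_j,V_{\ell_{n-1}}\otimes V)$ over length--$n$ paths $i\to\ell_1\to\cdots\to\ell_{n-1}\to j$ --- and hence sends the two--sided ideal generated by $R$ to the two--sided ideal of $KQ$ generated by $\Hom_{K\sfG}(T,T\otimes R)$, i.e.\ by the ``path expansion'' of the relations $R$.

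Finally, suppose $A$ is Koszul. Then $A$ is quadratic, $R\subseteq V\otimes V$, and the displayed decomposition shows $\langle I\rangle$ is generated by paths of length $2$, so $KQ/\langle I\rangle$ is quadratic. To obtain Koszulity I would first check that $A*\sfG$ is Koszul: its degree--zero part $K\sfG$ is semisimple, and tensoring the ($\sfG$--equivariant, as it is canonical) minimal graded projective resolution of $A_0$ over $A$ with $V_i$ over $K$, endowed with the diagonal $\sfG$--action, yields a minimal graded projective resolution of the simple $A*\sfG$--module $V_i$, which is linear precisely because $A$ is Koszul. Since Koszulity is preserved under graded Morita equivalence and the equivalence of Corollary~\ref{Cor:LusztigMoritaequiv} is graded --- the progenerator $T\otimes A$ being concentrated in degree $0$ --- it follows that $\widetilde A(\sfG)=KQ/\langle I\rangle$ is Koszul. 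The step I expect to cost the most effort is the identification of $\langle I\rangle$ with the path expansion of $R$: one must show that $\Mat_{d\times d}(-)^{\sfG}$ commutes with forming two--sided ideals generated by $\sfG$--stable subspaces, which follows from the Morita equivalence $T_K(V)*\sfG\sim KQ$ but --- because $\End_K(T)$ is not $\sfG$--fixed --- cannot be checked entrywise and has to be routed through the basic algebra $KQ=\Mat_{d\times d}(T_K(V))^{\sfG}$ itself.
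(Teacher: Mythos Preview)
Your argument is correct and follows essentially the same strategy as the paper: reduce to $A=T_K(V)$ via the quotient $A*\sfG\cong(T_K(V)*\sfG)/\langle I\rangle$, use \cite[Thm.~1.3]{GuoMartinez} to identify the basic algebra of $T_K(V)*\sfG$ with $KQ$, and then invoke Corollary~\ref{Cor:LusztigMoritaequiv}. The paper packages the first reduction by citing \cite[Thm.~1.8]{GuoMartinez} directly, whereas you route it through the exact functor $\Hom_{K\sfG}(T,T\otimes-)$ and unpack explicitly why $\langle I\rangle$ is generated by the path expansion of the relations --- this is more detail than the paper gives, but the content is the same.

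The one place where the two proofs genuinely diverge is the Koszulity step. The paper does not argue that $A*\sfG$ is Koszul for general Koszul $A$; it simply asserts that if $A*\sfG$ is Koszul then so is $e(A*\sfG)e=\widetilde A(\sfG)$, citing \cite[Lemma~2.2]{BocklandtSchedlerWemyss} for the idempotent reduction. Your argument is more self--contained: you show directly that $A*\sfG$ is Koszul by tensoring the (canonical, hence $\sfG$--equivariant) minimal linear resolution of $A_0$ with each simple $V_i$, and then transfer Koszulity along the graded Morita equivalence. This actually fills a small gap in the paper's proof, which tacitly relies on the discussion before the theorem (stated only for $A=\Sym_K V$ and $A=\bigwedge_K V$) for the Koszulity of $A*\sfG$.
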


\begin{proof} For the relations, note that by \cite[Thm.~1.8]{GuoMartinez}, $A*\sfG=T_K(V)/I * \sfG \cong (T_K(V) * \sfG)/\langle I \rangle$, which is Morita equivalent to the basic algebra $KQ/\langle I \rangle$. Moreover, recall that by Cor.~\ref{Cor:LusztigMoritaequiv} the Lusztig algebra $\widetilde A(\sfG,T)$ is the basic version of $A*\sfG$, and thus $\widetilde A(\sfG,T)$ is isomorphic to  $KQ/\langle I \rangle$. 
For Koszulity,  note that the Morita equivalence is induced by the idempotent $e$  for $T$, which satisfies $(A*\sfG)e(A*\sfG)=A*\sfG$. This makes $e(A*\sfG)e=\widetilde A(\sfG,T)$ and then (see Lemma 2.2 of \cite{BocklandtSchedlerWemyss}) if $A*\sfG$ is Koszul, so is $\widetilde A(\sfG,T)$. 
\end{proof}

\subsection{Examples} \label{Sub:Examples}

\begin{example} \label{Ex:D4}
Consider $\sfG=D_4=\langle \alpha ,\beta \,|\, \alpha^4=\beta^2=(\alpha\beta)^2=1\rangle$ viewed as a subgroup of $\GL(2,K)$, with $K=\CC$ (see the table below for the expressions of the generators $\alpha$ and $\beta$). Note that as a subgroup of $\GL(2,K)$, this group is generated by reflections. 
Assume that $V\cong K^2$ has basis $x,y$. Here we calculate the Lusztig algebra $\widetilde A(\sfG)$ for $A=\Sym_K(V)=K[x,y]$ and for its Koszul dual $A^!=\bigwedge_K(V)$.

Therefore we first determine the McKay quiver $Q$ of $\sfG$ (degree $1$ part of the Lusztig algebra) and then the (necessarily) quadratic relations by looking at the degree $2$ part of $\widetilde A(\sfG)$. Then, using Theorem \ref{Thm:LusztigAlgKoszul}, $\widetilde A(\sfG) \cong KQ/\langle I \rangle$, where $I$ are the relations induced by the commutativity relation $xy-yx$ in $K\langle x , y \rangle$ (in case of $A=K[x,y]$) and the relations $xy+yx,x^2,y^2$ for the Koszul dual $A^!$.

 First note that $D_4$ has four one-dimensional irreducible representations $V_i$, $i=0, \ldots, 3$ and one two-dimensional one $V_4=V$ (the defining representation). Here $r=5$ and $d=6$ and the degree $1$ part of $\widetilde A(\sfG)$ is a matrix $M=(m_{ij})_{i,j=0,\ldots, 5}$ with entries in $\End_K(T) \otimes V \subseteq \End_K(T) \otimes T_K(V)$. It can be calculated using Remark \ref{Rmk:Computationofdeg1}: We order the representations as $V_0, \ldots, V_3$ and $V_4$ and thus $M^{ij}=m_{ij}$ encodes the arrow $j \xrightarrow{} i$ for $i,j=0, \ldots, 3$ whereas $M^{i4}=(m_{i4},m_{i5})$ encodes the arrows $4 \xrightarrow{} i$, $M^{4j}=(m_{4j},m_{5j})^T$ the arrows $j \xrightarrow{} 4$ for $i,j=0, \ldots, 3$, and $M^{44}=\begin{pmatrix}  m_{44} & m_{45} \\ m_{54} & m_{55} \end{pmatrix}$ the loops at $4$. Now for all $g \in \sfG$ we must determine for each entry $m_{kl}=a_{kl}x+b_{kl}y$, where $a_{kl}, b_{kl} \in K$,  in $M^{ij}$ such that 
$$M^{ij}=\rho_{V_i}(g)g(M^{ij})\rho_{V_j}^{-1}(g) \ . $$
Since it is enough to do this on the generators of the group, we only need the expressions $\rho_{V_i}$ of the two generators $\alpha,\beta$ (see table below). 
 \[
\begin{array}{|c|rr|}
\hline
g & \alpha & \beta \\
\hline
\rho_{V_0}(g) & 1 & 1 \\
\rho_{V_1}(g) & 1 & -1 \\
\rho_{V_2}(g) & -1 & 1 \\
\rho_{V_3}(g) & -1 & -1 \\
\rho_{V_4}(g) & \begin{pmatrix} i & 0 \\ 0 & -i \end{pmatrix} & \begin{pmatrix} 0 & 1 \\ 1 & 0 \end{pmatrix} \\
\hline
\end{array}
\]

For $i,j=0, \ldots 3$ we have $M^{ij}=m_{ij}$ and one gets from evaluating for $g=\beta$
$$\rho_{V_j}(\beta)\beta(m_{ij})\rho_{V_i}^{-1}(\beta)=\rho_{V_i}(\beta)\rho_{V_j}(\beta)\beta(m_{ij})=(\pm 1)(b_{ij}x-a_{ij}y) \ , $$
since the $\rho_{V_k}(\beta)$ are just $\pm 1$-scalars. This yields the equations
$$(\pm 1)b_{ij}=a_{ij} \text{ and } (\mp 1)a_{ij}=b_{ij} \ ,$$
which implies that all $a_{ij}=b_{ij}=0$. For the remaining entries in $M^{i4}$ (the degree $1$ part of $\Hom_{K\sfG}(V_4,V_i)$), $i=0,\ldots, 3$, we have to calculate the expression $\rho_{V_i}(g)g(m_{i4},m_{i5})\rho_{V_4}^{-1}(g)$ for $g=\alpha, \beta$ , which amounts to:
 \[
\begin{array}{|c|ccc|}
\hline
g & \rho_{V_i}(g)g(m_{i4},m_{i5})\rho_{V_4}^{-1}(g) & & \\
\hline
\alpha & \rho_{V_i}(\alpha)(ia_{i4}x-ib_{i4}y,ia_{i5}x-ib_{i5}y)\begin{pmatrix} -i & 0 \\ 0 & i \end{pmatrix}& = &\rho_{V_i}(\alpha)(a_{i4}x -b_{i4}y, -a_{i5}x+b_{i5}y) \\
\beta & \rho_{V_i}(\beta)(b_{i4}x+a_{i4}y,b_{i5}x+a_{i5}y) \begin{pmatrix} 0 & 1 \\ 1 & 0 \end{pmatrix} & = & \rho_{V_i}(\beta)(b_{i5}x+a_{i5}y,b_{i4}x+a_{i4}y) \\
\hline
\end{array}
\] 
Equating with $m_{i4},m_{i5}$ yields the expressions in columns $4$ and $5$ of the matrix $M$ in \eqref{Eq:D4}. Similarly, one obtains the four expressions for $\Hom_{K\sfG}(V_i,V_4)$ and $\Hom_{K\sfG}(V_4,V_4)$ can be seen to be the $2 \times 2$-zero matrix. Hence the degree $1$ part of $\widetilde A(\sfG)$ is matrices of the form
 \begin{equation} \label{Eq:D4} M= \begin{pmatrix} 0 & 0 & 0& 0 & ax & ay \\ 0 & 0 & 0 & 0 &bx & -by \\0 & 0 & 0 & 0 &cy & cx \\ 0 & 0 & 0 & 0 &dy & -dx \\
ey & fy &gx &hx & 0 & 0 \\ ex & -fx &gy &-hy & 0 & 0 \end{pmatrix} \ , 
\end{equation}

for arbitrary scalars $a,b,c,d,e,f,g,h$ in $K$. 
 The vertices of $Q$ correspond to the irreducible representations $V_i$ of $G$ and the
  the idempotent $e_i$ at the vertex $V_i$ corresponds to the block diagonal matrix with identity of size $\dim V_i$ in the appropriate location and zeroes elsewhere.  For example, $e_0$ is the single entry matrix $E_{00}$ and $e_4=E_{44}+E_{55}$.
 Now the arrows $i \xrightarrow{} j$ in the McKay quiver are given as $e_j M e_i$, $i,j=0, \ldots, 4$.  Note that setting the vectors
  $(a,b,c,d,e,f,g,h)$ to be the standard basis vectors corresponds to the 8 arrows of $Q$.  Thus we obtain from $M$ the following well-known McKay quiver $Q$ for $\sfG$. Note that the labels of the arrows $(A,\ldots,H)$ correspond to the scalars $(a, \ldots, h)$:

\[
\begin{tikzpicture}
\node at (4,0) {\begin{tikzpicture} 
\node (C1) at (0,0)  {$2$};
\node (C2) at (2,0)  {$4$};
\node (C3) at (4,0)  {$1$};
\node (C4) at (2,2)  {$0$};
\node (C5) at (2,-2)  {$3$};

\draw [->,bend left=20,looseness=1,pos=0.5] (C1) to node[above]  {G} (C2);
\draw [->,bend left=20,looseness=1,pos=0.5] (C2) to node[below] {C} (C1);

\draw [->,bend left=20,looseness=1,pos=0.5] (C3) to node[below]  {F} (C2);
\draw [->,bend left=20,looseness=1,pos=0.5] (C2) to node[above] {B} (C3);

\draw [->,bend left=25,looseness=1,pos=0.5] (C2) to node[left]  {A} (C4);
\draw [->,bend left=25,looseness=1,pos=0.5] (C4) to node[right] {E} (C2);

\draw [->,bend left=25,looseness=1,pos=0.5] (C2) to node[right]  {D} (C5);
\draw [->,bend left=25,looseness=1,pos=0.5] (C5) to node[left] {H} (C2);

\end{tikzpicture}}; 
\end{tikzpicture}
\]

For example $g=1$ and the the other scalars are zero gives the matrix corresponding to the arrow $G,$ 
$\begin{pmatrix} 0 & 0 & 0& 0 & 0 & 0 \\ 0 & 0 & 0 & 0 &0 & 0 \\0 & 0 & 0 & 0 &0 & 0 \\ 0 & 0 & 0 & 0 & 0 & 0 \\
0 & 0 &x & 0& 0 & 0 \\ 0 & 0 &y &0 & 0 & 0 \end{pmatrix}$.

Now we compute the relations: therefore first calculate $M \cdot M'$, where
$$M'= \begin{pmatrix} 0 & 0 & 0& 0 & a'x & a'y \\ 0 & 0 & 0 & 0 &b'x & -b'y \\0 & 0 & 0 & 0 &c'y & c'x \\ 0 & 0 & 0 & 0 &d'y & -d'x \\
e'y & f'y &g'x &h'x & 0 & 0 \\ e'x & -f'x &g'y &-h'y & 0 & 0 \end{pmatrix} $$ for scalars $a', b', c', d', e', f', g' \in K$ as this encodes all paths of length $2$ in $\widetilde A(\sfG)$ (assuming that $A$ is of the form $T_K(V)/I$), that is, a path of length $2$ starting at vertex $i$ and ending at vertex $j$ is $$\sum_{k: \exists i \xrightarrow{} k \xrightarrow{} j}\alpha_{k}e_jMe_kM'e_i $$
 for some $\alpha_k \in K$. In the example at hand, this matrix looks as follows:
\[ 
\begin{tiny} \begin{pmatrix} ae'(xy+yx) & af'(xy-yx) & ag' (x^2+y^2) & ah'(x^2 -y^2) & 0 & 0 \\ be'(xy-yx) & bf'(xy+yx) & bg'(x^2-y^2) & bh'(x^2+y^2) &0  & 0 \\ ce'(y^2+x^2) & cf'(y^2-x^2) & cg'(yx+xy) & ch'(yx-xy) &0 & 0 \\ de'(y^2-x^2) & df'(y^2+x^2) & dg'(yx-xy) & dh'(yx+xy) &0 & 0 \\
0 & 0 &0 &0 & (ea'+fb')yx + (gc'+ hd')xy & (ea'-fb')y^2 +(gc'-hd')x^2 \\ 0 & 0 &0 &0 & (ea'-fb')x^2+(gc'-hd')y^2 & (ea'+fb')xy + (gc'+ hd')yx \end{pmatrix} \end{tiny} \ . \]
For the case $A=\Sym_K(V)$, the commutativity relations immediately imply four relations from the upper left $4 \times 4$ submatrix, that is, 
$$AF=BE=CH=DG=0 \ .$$
 For the relation at $V_4$, one sees that for off-diagonal entries the conditions  $\alpha_{EA}EA - \alpha_{FB}FB=0$ and $\alpha_{GC}GC-\alpha_{HD}HD=0$ for scalars $\alpha_{EA}, \alpha_{FB}, \alpha_{GC}, \alpha_{HD}$ hold, and for the diagonal entries the commutativity relation yields $\alpha_{EA}EA+\alpha_{FB}FB+\alpha_{GC}GC+\alpha_{HD}HD=0$. Solving for the $\alpha_{XY}$ implies that we get one other relation at $V=V_4$, namely
$$EA+FB=GC+HD\ .$$
Thus the Lusztig algebra of $\sfG$ is
$$\widetilde{\Sym_K(V)}(\sfG) \cong KQ/\langle AF,BE,CH,DG,EA+FB-GC-HD \rangle . $$

For $A=\bigwedge_K(V)$, we immediately get relations 
$$AE=AG=AH=BF=BG=BH=CE=CF=CG=DE=DF=DH=0$$
from the upper left $4 \times 4$ submatrix. From $V_4$ we get the condition 

$\alpha_{EA}EA+\alpha_{FB}FB+\alpha_{GC}GC+(\alpha_{EA}+\alpha_{FB}-\alpha_{GC})HD=0$, which yields the relations 
$$EA+HD=0, FB+HD=0, GC-HD=0 \ .$$
Thus 
$\widetilde{\bigwedge(V)}(\sfG)\cong$ $$KQ/\langle AE,AG,AH, BF, BG, BH, CE, CF, CG, DE, DF, DH,EA+HD,FB+HD,GC-HD \rangle \  .$$
\end{example}

\begin{Bem}
Note that the quaternion group $\sfG = Q_8 = \langle \alpha, \beta \,|\, \alpha^4=1, \alpha^2=\beta^2, \beta \alpha=\alpha^{-1}\beta \rangle$ considered as a subgroup of $\GL(2,K)$ is in $\SL(2,K)$. Since $D_4$ and $Q_8$ have the same character table, the McKay quiver is then also $\widetilde D_4$ and similar calculations as above show that in this case, the Lusztig algebra $\widetilde{\Sym_K(V)}(\sfG)$ is isomorphic to the preprojective algebra of $\widetilde D_4$. Note that for subgroups $\sfG \subseteq \SL(2,K)$, one will always obtain that the Lusztig algebra  $\widetilde{\Sym_K(V)}(\sfG)$ is isomorphic to the preprojective algebra of the corresponding extended Dynkin diagram, see \cite{ReitenVdB}. 
 \end{Bem}

\begin{example} \label{Ex:abelian} 
  (The abelian case) Let $\sfG$ be a finite abelian subgroup of $\GL(V)$, with $\dim_{K}(V)=n$. First we calculate the McKay quiver of $\sfG$ and then the relations for the Lusztig algebra $\widetilde{S}(\sfG)$, where $S=\Sym_K(V) \cong K[x_1, \ldots, x_n]$. Note that $\widetilde{S}(\sfG)$ is isomorphic to the skew group ring $S*\sfG$ in this case. This example was first computed in \cite{Crawetc} and the relations with superpotentials in \cite{BocklandtSchedlerWemyss}. We follow mostly the notation in \cite{BocklandtSchedlerWemyss}, but note that the arrows in our McKay quiver go the other way round.
  
The setup is also the same as in loc.cit: Since $\sfG$ is abelian, we may choose a basis $x_1, \ldots, x_n$ of $V$ that diagonalizes the action of $\sfG$, and we get $n$ characters $\rho_1, \ldots, \rho_n$ defined via $\rho_i(g)$ the $i$-th diagonal element of $g \in \sfG$. Thus the defining representation of $\sfG$ is $\sum_{i=1}^n\rho_i$, but in total we have $|\sfG|$ many irreducible representations. We label them by $\rho_1, \ldots, \rho_{|\sfG|}$, so that the first $n$ are the ones defined above. First the description of the degree one part of $\widetilde{A}(\sfG)$: this is encoded in the $|\sfG| \times |\sfG|$-matrix $M$ where $m_{ij}$ is the arrow from $\rho_j$ to $\rho_i$ in the McKay quiver. Each $m_{ij}$ is of the form
$$m_{ij}=\sum_{l=1}^n a_{ij}^l x_l $$
with coefficients $a_{ij}^l \in S$. Being invariant under the $\sfG$-action means that for all $g \in \sfG$ we must have (cf.~Rmk.~\ref{Rmk:Computationofdeg1})
$$m_{ij}=\rho_j^{-1}(g) \rho_i(g) g(m_{ij}) \ .$$
Note that the $\rho_i(g)$ are just scalars here and clearly $g(x_l)=\rho_l(g)x_l$ for $l=1, \ldots, n$. That means that the $a_{ij}^l$ must satisfy the equations
$$a_{ij}^l\left( \rho_j^{-1}(g) \rho_i(g) \rho_l(g) -1 \right) = 0 $$
for all $g \in \sfG$. Thus we immediately see that for fixed $i,j$ there exists at most one $l \in \{1, \ldots, n\}$ such that $a_{ij}^l \neq 0$ (namely $l$ such that $\rho_j=\rho_i \otimes \rho_l$). Thus each entry $m_{ij}$ is a monomial $a_{ij}^lx_l$ or $0$. Fix $\rho_i$, then there are exactly $n$ arrows ending in $\rho_i$, namely starting at $\rho_j:=\rho_i \otimes \rho_l$ and with $m_{ij}=a_{ij}^l x_l$ for $l =1, \ldots, n$. This means that in the $i$-th row of $M$ we have $n$ non-zero entries, and each $x_l$ appears exactly once (If $m_{ij}=a_{ij}^l x_l\neq 0$ and $m_{ij'}= a_{ij'}^l x_l \neq 0$ then we would have $\rho_j=\rho_{j'}$). This gives the well-known description of the arrows 
$$ \rho_i \otimes \rho_l \xrightarrow{M_{ij}:=a_{ij}^lx_l} \rho_i\ $$ 
in the McKay quiver of $\sfG$.

Now we compute the relations in the Lusztig algebra $\widetilde{S}(\sfG)$, where $S=T_K(V)/\langle x_i x_j - x_j x_i, 1 \leq i < j \leq n \rangle$. We look again at $M \cdot M'$: $(M \cdot M')_{ij}$ are all paths of length $2$ from $\rho_j$ to $\rho_i$, that is $\sum_{k=1}^{|\sfG|}m_{ik}m'_{kj}$. Here $m_{ik}=a_{ik}^{l}x_l \neq 0$ if $\rho_k=\rho_i \otimes \rho_l$ and $m'_{kj}={a'}^{l'}_{kj}x_{l'} \neq 0$ if $\rho_j=\rho_k \otimes \rho_{l'}$. We can only get relations for $l \neq l'$. But if $l \neq l'$, then there exists a unique $k' \neq k$ such that $m_{ik'}m'_{k'j} \neq 0$, namely $\rho_{k'}:=\rho_i \otimes \rho_{l'}=\rho_j \otimes \rho_{l}^{-1}$. Thus, in order to get a relation in $\widetilde{S}(\sfG)$, the coefficient $\alpha_{k}$ of $M_{ik}M_{kj}$ must be the negative of the coefficient $\alpha_{k'}$ of $M_{ik'}M_{k'j}$. This yields the relations 
$$M_{ik}M_{kj} - M_{ik'}M_{k'j}=0 \ .$$
\end{example}

\begin{Bem}
For $\sfG=(\mu_2)^n$ acting on $K^n$ as a reflection group, this shows that the Lusztig algebra is isomorphic to the algebra of \cite{DaFI} and gives an alternative proof of \cite[Prop.~5.6]{DaFI}.
\end{Bem}

\begin{example} \label{Ex:S3}
Let $\sfG=S_{3}$, the symmetric group on three elements.  It is generated by the $2$--cycles
$s_{1}=(1,2), s_{2}=(2,3)$ and its irreducible representations can be identified as 
\[
\begin{array}{|c|cccccc|}
\hline
S_{3}&1& s_{1} & s_{2} & s_{12}=s_{1}s_{2} & s_{21}=s_{2}s_{1} &s_{3}=s_{1}s_{2}s_{1}=
s_{2}s_{1}s_{2} \\
\hline
\rho_{\triv} &1&1&1&1&1&1
\\
%\hline
\rho_{2,1}&\begin{pmatrix}
 1&0\\0&1
\end{pmatrix}&
\begin{pmatrix}
 0&1\\1&0
\end{pmatrix}&
\begin{pmatrix}
 0&\omega\\
 \omega^{2}&0
\end{pmatrix}&
\begin{pmatrix}
 \omega^{2}&0\\
 0&\omega
\end{pmatrix}&
\begin{pmatrix}
 \omega&0\\
 0&\omega^{2}
\end{pmatrix}&
\begin{pmatrix}
 0&\omega^{2}\\\omega&0
\end{pmatrix}\\
\rho_{\sgn} &1&-1&-1&1&1&-1\\
\hline
\end{array}
\]
Here we calculate the Lusztig algebra $\widetilde{A}(\sfG)$: we compute a general element of $(\End_K(T) \otimes A)^\sfG$ in Theorem \ref{Thm:LuzstigAlgS3}, and then the McKay quiver from it in Cor.~\ref{Cor:S3deg1}. Finally we compute the relations for Koszul algebras $A=T_K(V)/I$, where $I$ is an $S_3$-invariant ideal in $V \otimes V$.

Let now $A$ be a $\sfG-K$--algebra and $(a_{ij})$ a matrix over it. We  calculate the $\sfG$-invariant elements in $\End_K(T) \otimes A$, starting with the block $M^{22},$ as in Remark \ref{Rmk:Computationofdeg1}: Note that we just have to consider the action on the generators $s_1, s_2$.
\[
\begin{array}{|c|ccc|}
\hline
g& \rho_{2,1}(g)(ga_{ij})\rho_{2,1}(g)^{-1}&&\\
\hline
s_{1}&
\begin{pmatrix}
 0&1\\1&0
\end{pmatrix}
\begin{pmatrix}
 s_{1}a_{11}&s_{1}a_{12}\\
 s_{1}a_{21}&s_{1}a_{22}
\end{pmatrix}
\begin{pmatrix}
 0&1\\1&0
\end{pmatrix}&=&
s_{1}\begin{pmatrix}
 a_{22}&a_{21}\\
 a_{12}&a_{11}
\end{pmatrix}
\\
s_{2}&
\begin{pmatrix}
 0&\omega\\
 \omega^{2}&0
\end{pmatrix}
\begin{pmatrix}
 s_{2}a_{11}&s_{2}a_{12}\\
 s_{2}a_{21}&s_{2}a_{22}
\end{pmatrix}
\begin{pmatrix}
 0&\omega\\
 \omega^{2}&0
\end{pmatrix}&=&
s_{2}\begin{pmatrix}
 a_{22}&\omega^{2} a_{21}\\
\omega a_{12}& a_{11}
\end{pmatrix}
\\
\hline
\end{array}
\]
We investigate those matrices $(a_{ij})$ that are invariant and traceless. This amounts to the following equations according to the above table:
\[
\begin{array}{|c|ccc|}
\hline
g&&&\\
\hline
s_{1}&
\begin{pmatrix}
 0&1\\1&0
\end{pmatrix}
\begin{pmatrix}
 s_{1}a_{11}&s_{1}a_{12}\\
 s_{1}a_{21}&-s_{1}a_{11}
\end{pmatrix}
\begin{pmatrix}
 0&1\\1&0
\end{pmatrix}&=&
s_{1}\begin{pmatrix}
-a_{11}&a_{21}\\
 a_{12}&a_{11}
\end{pmatrix}
=
\begin{pmatrix}
a_{11}&a_{12}\\
 a_{21}&-a_{11}
\end{pmatrix}
\\
s_{2}&
\begin{pmatrix}
 0&\omega\\
 \omega^{2}&0
\end{pmatrix}
\begin{pmatrix}
 s_{2}a_{11}&s_{2}a_{12}\\
 s_{2}a_{21}&-s_{2}a_{11}
\end{pmatrix}
\begin{pmatrix}
 0&\omega\\
 \omega^{2}&0
\end{pmatrix}&=&
s_{2}\begin{pmatrix}
- a_{11}&\omega^{2} a_{21}\\
\omega a_{12}& a_{11}
\end{pmatrix}=
\begin{pmatrix}
a_{11}&a_{12}\\
 a_{21}&-a_{11}
\end{pmatrix}
\\
\hline
\end{array}
\]
These matrix equations say that $a_{11}$ is an $S_{3}$--anti-invariant and that
\[
a_{21}= s_{1}a_{12} =\omega s_{2}a_{12}\,.
\]
Therefore, $s_{1}-\omega s_{2}$ must annihilate $a_{12}$.

In summary, the invariant matrices are those of the form
\begin{align*}
\Hom_{K\sfG}(V,V\otimes A) &=
\left\{
\begin{pmatrix}
a+b&c\\
s_{1}c&a-b
\end{pmatrix}
\bigg|
a\in A^{S_{3}}, b\in A^{S_{3}}_{\sgn}, c\in A, (s_{1}-\omega s_{2})c=0
\right\}
\end{align*}
We now do the remaining entries. For $M^{21}$ we get
\[
\begin{array}{|c|ccc|}
\hline
g&&\Hom_{K\sfG}(K_{\triv},V\otimes A)&\\
\hline
s_{1}&
\begin{pmatrix}
 0&1\\1&0
\end{pmatrix}
\begin{pmatrix}
 s_{1}a_{10}\\
 s_{1}a_{20}
\end{pmatrix}
&=&
s_{1}\begin{pmatrix}
a_{20}\\
 a_{10}
\end{pmatrix}
=
\begin{pmatrix}
a_{10}\\
 a_{20}
\end{pmatrix}
\\
s_{2}&
\begin{pmatrix}
 0&\omega\\
 \omega^{2}&0
\end{pmatrix}
\begin{pmatrix}
 s_{2}a_{10}\\
 s_{2}a_{20}
\end{pmatrix}
&=&
s_{2}\begin{pmatrix}
 \omega a_{20}\\
 \omega^{2} a_{10}
\end{pmatrix}
=
\begin{pmatrix}
a_{10}\\
 a_{20}
\end{pmatrix}
\\
\hline
\end{array}
\]
This amounts to the conditions
\begin{align*}
a_{20}&=s_{1}a_{10}\\
a_{20}&=\omega^{2}s_{2}a_{10}
\end{align*}
Thus, $(s_{1}-\omega^{2}s_{2})a_{10}=0$ and $a_{20}=s_{1}a_{10}$.

Analogously, $M^{23}$
\[
\begin{array}{|c|ccc|}
\hline
g&&\Hom_{K\sfG}(K_{\sgn},V\otimes A)&\\
\hline
s_{1}&
\begin{pmatrix}
 0&1\\1&0
\end{pmatrix}
\begin{pmatrix}
 s_{1}a_{13}\\
 s_{1}a_{23}
\end{pmatrix}(-1)
&=&
-s_{1}\begin{pmatrix}
a_{23}\\
 a_{13}
\end{pmatrix}
=
\begin{pmatrix}
a_{13}\\
 a_{23}
\end{pmatrix}
\\
s_{2}&
\begin{pmatrix}
 0&\omega\\
 \omega^{2}&0
\end{pmatrix}
\begin{pmatrix}
 s_{2}a_{13}\\
 s_{2}a_{23}
\end{pmatrix}(-1)
%\begin{pmatrix}
% 1&0\\-1&-1
%\end{pmatrix}
&=&
-s_{2}\begin{pmatrix}
 \omega a_{23}\\
 \omega^{2} a_{13}
\end{pmatrix}
=
\begin{pmatrix}
a_{13}\\
 a_{23}
\end{pmatrix}
\\
\hline
\end{array}
\]
amounts to
\begin{align*}
a_{23}&=-s_{1}a_{13}\\
a_{23}&=-\omega^{2}s_{2}a_{13}\,.
\end{align*}
Thus, $(s_{1}-\omega^{2}s_{2})a_{13}=0$ and $a_{23}=-s_{1}a_{13}$.

 Finally, for $M^{12}$ and $M^{32}$ we get that
\[
\begin{array}{|c|ccc|}
\hline
g&&\Hom_{K\sfG}(V,K_{\triv}\otimes A)&\\
\hline
s_{1}&
\begin{pmatrix}
 s_{1}a_{01}&
 s_{1}a_{02}
\end{pmatrix}
\begin{pmatrix}
 0&1\\1&0
\end{pmatrix}
%\begin{pmatrix}
% 0&1\\1&0
%\end{pmatrix}
&=&
s_{1}\begin{pmatrix}
a_{02}&
 a_{01}
\end{pmatrix}
=
\begin{pmatrix}
a_{01}&
 a_{02}
\end{pmatrix}
\\
s_{2}&
\begin{pmatrix}
 s_{2}a_{01}&
 s_{2}a_{02}
\end{pmatrix}
\begin{pmatrix}
 0&\omega\\
 \omega^{2}&0
\end{pmatrix}
%\begin{pmatrix}
% 1&0\\-1&-1
%\end{pmatrix}
&=&
s_{2}\begin{pmatrix}
 \omega^{2}a_{02}&
\omega a_{01}
\end{pmatrix}
=
\begin{pmatrix}
a_{01}&
 a_{02}
\end{pmatrix}
\\
\hline
\end{array}
\]
yields
\begin{align*}
a_{02}&= s_{1}a_{01}\\
a_{02}&=\omega s_{2}a_{01}\,.
\end{align*}
Thus, $(s_{1}-\omega s_{2})a_{01}=0$ and $a_{02}= s_{1}a_{01}$.
\[
\begin{array}{|c|ccc|}
\hline
g&&\Hom_{K\sfG}(V,K_{\sgn}\otimes A)&\\
\hline
s_{1}&
-\begin{pmatrix}
 s_{1}a_{31}&
 s_{1}a_{32}
\end{pmatrix}
\begin{pmatrix}
 0&1\\1&0
\end{pmatrix}
%\begin{pmatrix}
% 0&1\\1&0
%\end{pmatrix}
&=&
-s_{1}\begin{pmatrix}
a_{32}&
 a_{31}
\end{pmatrix}
=
\begin{pmatrix}
a_{31}&
 a_{32}
\end{pmatrix}
\\
s_{2}&
-\begin{pmatrix}
 s_{2}a_{31}&
 s_{2}a_{32}
\end{pmatrix}
\begin{pmatrix}
 0&\omega\\
 \omega^{2}&0
\end{pmatrix}
%\begin{pmatrix}
% 1&0\\-1&-1
%\end{pmatrix}
&=&
-s_{2}\begin{pmatrix}
 \omega^{2}a_{32}&
\omega a_{31}
\end{pmatrix}
=
\begin{pmatrix}
a_{31}&
 a_{32}
\end{pmatrix}
\\
\hline
\end{array}
\]
yields
\begin{align*}
a_{32}&= -s_{1}a_{31}\\
a_{32}&=-\omega s_{2}a_{31}
\end{align*}
Thus, $(s_{1}-\omega s_{2})a_{31}=0$ and $a_{32}= -s_{1}a_{31}$.

We have now established the following result.
\begin{Thm} \label{Thm:LuzstigAlgS3}
A matrix in $\End_{K}(T)\otimes A$ is invariant if, and only if, it is of the following form
\begin{align*}
\begin{pmatrix}
a_{00}&a_{01}&s_{1}a_{01}&a_{03}\\
a_{10}&a+b&c&a_{13}\\
s_{1}a_{10}&s_{1}c&a-b&-s_{1}a_{13}\\
a_{30}&a_{31}&-s_{1}a_{31}&a_{33}
\end{pmatrix}
\end{align*}
where
\begin{itemize}
\item $a,a_{00}, a_{33}\in A^{S_{3}}$ are invariants,
\item $b, a_{03}, a_{30}\in A^{S_{3}}_{\sgn}$ are anti--invariants,
\item $u\in\{a_{10},a_{13}\}\subset A$ satisfies $(s_{1}-\omega^{2}s_{2})u=0$,
\item $v\in \{c, a_{01},a_{31}\}\subset A$ satisfies $(s_{1}-\omega s_{2})v=0$.
\end{itemize}
\end{Thm}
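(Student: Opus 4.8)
The plan is to exploit the block decomposition $\End_{K}(T)\otimes A=\bigoplus_{i,j}\Hom_{K}(V_{i},V_{j})\otimes A$ induced by $T=K_{\triv}\oplus V_{2,1}\oplus K_{\sgn}$. Since the $\sfG$-action of \eqref{eq:Gaction} carries each $(i,j)$-block to itself, taking $\sfG$-invariants commutes with this direct sum, so by Remark~\ref{Rmk:Computationofdeg1} a matrix $M$ lies in $\widetilde A(\sfG,T)=(\End_{K}(T)\otimes A)^{\sfG}$ if and only if each block satisfies $M^{ij}=\rho_{V_{i}}(g)\,g(M^{ij})\,\rho_{V_{j}}(g)^{-1}$ for all $g\in\sfG$; and because the set of $g$ fixing a given block is a subgroup, it suffices to impose these equations on the two generators $s_{1},s_{2}$ of $S_{3}$. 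Indexing rows and columns of the $4\times 4$ matrix so that $0$ labels $K_{\triv}$, the pair $\{1,2\}$ labels the chosen basis of $V_{2,1}$, and $3$ labels $K_{\sgn}$, the asserted normal form is then simply the concatenation of the nine block answers.

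Five of those blocks have already been worked out in the computations preceding the statement: the central block $\Hom_{K\sfG}(V_{2,1},V_{2,1}\otimes A)$ and the four ``mixed'' blocks $\Hom_{K\sfG}(K_{\triv},V_{2,1}\otimes A)$, $\Hom_{K\sfG}(K_{\sgn},V_{2,1}\otimes A)$, $\Hom_{K\sfG}(V_{2,1},K_{\triv}\otimes A)$, $\Hom_{K\sfG}(V_{2,1},K_{\sgn}\otimes A)$. For the central block one splits off the trace (an element of $A^{S_{3}}$ contributing the scalar matrix $a\cdot\id$) and checks that the traceless invariant part is $\left(\begin{smallmatrix} b & c\\ s_{1}c & -b\end{smallmatrix}\right)$ with $b\in A^{S_{3}}_{\sgn}$ and $(s_{1}-\omega s_{2})c=0$; for each mixed block the two equations for $s_{1}$ and $s_{2}$ force the dependent coordinate to be $\pm s_{1}$ of the free one (sign $-$ exactly when $K_{\sgn}$ is involved) and annihilate the free coordinate by $s_{1}-\omega^{2}s_{2}$ when $V_{2,1}$ is the target and by $s_{1}-\omega s_{2}$ when $V_{2,1}$ is the source. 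Substituting these answers reproduces precisely the central entries $a+b,\,c,\,s_{1}c,\,a-b$, the columns $(a_{10},s_{1}a_{10})^{\mathsf{T}}$, $(a_{13},-s_{1}a_{13})^{\mathsf{T}}$ and the rows $(a_{01},s_{1}a_{01})$, $(a_{31},-s_{1}a_{31})$ displayed in the theorem, together with the stated conditions on $u\in\{a_{10},a_{13}\}$ and $v\in\{c,a_{01},a_{31}\}$.

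What remains is the four corner blocks $M^{ij}$ with $i,j\in\{0,3\}$, i.e.\ homomorphisms between the two one-dimensional representations. Here $\rho_{V_{i}}(g),\rho_{V_{j}}(g)\in\{\pm 1\}$ are scalars equal to their own inverses, so the invariance equation collapses to $g(M^{ij})=\rho_{V_{i}}(g)\rho_{V_{j}}(g)\,M^{ij}$: for $i=j$ this says $M^{ij}\in A^{S_{3}}$ (hence $a_{00},a_{33}$ are invariants), and for $\{i,j\}=\{0,3\}$ the product of characters equals $\sgn$, so $M^{ij}\in A^{S_{3}}_{\sgn}$ (hence $a_{03},a_{30}$ are anti-invariants). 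Assembling the nine blocks gives both directions of the equivalence. I do not anticipate a genuine obstacle: the content is just the observation that invariance may be tested blockwise, together with the elementary per-block linear algebra; the only point demanding care is bookkeeping which power of $\omega$ and which sign occurs in each block, which depends on the fixed matrices $\rho_{2,1}(s_{1}),\rho_{2,1}(s_{2})$ and on the direction of each $\Hom$.
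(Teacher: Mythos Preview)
Your proposal is correct and follows essentially the same approach as the paper: the paper proves the theorem by the block-by-block computations preceding the statement (ending with ``We have now established the following result''), checking the invariance condition of Remark~\ref{Rmk:Computationofdeg1} on the generators $s_{1},s_{2}$ for each of the five blocks involving $V_{2,1}$. Your write-up additionally makes explicit the easy corner blocks $M^{ij}$ with $i,j\in\{0,3\}$, which the paper leaves implicit; otherwise the argument is identical.
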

 
Now we are interested in the degree $1$ part of $\widetilde{A}(\sfG,T)$ (see Section  \ref{Sub:QuiverRelationsKoszul} on the assumptions on $A$ and grading), as this encodes the McKay quiver of $\sfG$ and will provide us with the relations. Assume that $u,v$ as in the Theorem are in a copy of $V=Kx\oplus Ky \subset A$.
Given the action of $S_{3}$ on $V$ as specified above, we have that
$u=\alpha x +\beta y\in V$ satisfies $(s_{1}-\omega^{2}s_{2})u=0$ if, and only if,
\begin{align*}
0=(s_{1}-\omega^{2}s_{2})(\alpha x +\beta y)&=(x,y)
\left(\begin{pmatrix}
0&1\\
1&0
\end{pmatrix}
-\begin{pmatrix}
0&1\\
\omega&0
\end{pmatrix}
\right)
\begin{pmatrix}
\alpha\\
\beta
\end{pmatrix}
=
(x,y)\begin{pmatrix}
0&0\\
1-\omega&0
\end{pmatrix}
\begin{pmatrix}
\alpha\\
\beta
\end{pmatrix}
\end{align*}
which means that $\alpha=0$, thus $u=\beta y$ and $s_{1}(u)=\beta x$.

Similarly, the equation $(s_{1}-\omega s_{2})v=0$ for $v=\gamma x +\delta y$ amounts to
$v=\gamma x$ and $s_{1}(v)=\gamma y$.

\begin{corollary} \label{Cor:S3deg1}
If the matrix in the theorem is in $\End_{K}(T)\otimes V\subset \End_{K}(T)\otimes A$, then it is of the form  
\begin{align} \label{Eq:MS3}
M=\begin{pmatrix}
0&bx&by&0\\
ay&0&ex&dy\\
ax&ey&0&-dx\\
0&cx&-cy&0
\end{pmatrix}
\end{align}
for $a,b,c,d,e \in K$, whence the degree $2$ part of $(\End_{K}(T)\otimes A)^\sfG$ is of the form

\begin{align*}
MM'&=\begin{pmatrix}
0&bx&by&0\\
ay&0&ex&dy\\
ax&ey&0&-dx\\
0&cx&-cy&0
\end{pmatrix}
\begin{pmatrix}
0&b'x&b'y&0\\
a'y&0&e'x&d'y\\
a'x&e'y&0&-d'x\\
0&c'x&-c'y&0
\end{pmatrix}\\
&=
\begin{pmatrix}
ba'(xy+yx)&be'y^{2}&be'x^{2}&bd'(xy-yx)\\
ea'x^{2}&(ab'+dc')yx+ee'xy&(ab'-dc')y^{2}&-ed'x^{2}\\
ea'y^{2}&(ab'-dc')x^{2}&(ab'+dc')xy+ee'yx&ed'y^{2}\\
ca'(xy-yx)&-ce'y^{2}&ce'x^{2}&cd'(xy+yx)
\end{pmatrix}
\end{align*}
for some $a',b',c',d',e'\in K$.
\end{corollary}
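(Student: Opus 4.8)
The plan is to specialise Theorem~\ref{Thm:LuzstigAlgS3} to the degree-one component and then carry out a direct matrix multiplication.

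First I would restrict the general invariant matrix of Theorem~\ref{Thm:LuzstigAlgS3} to $\End_{K}(T)\otimes V\subset\End_{K}(T)\otimes A$, i.e.\ to the case where every entry lies in $V=Kx\oplus Ky$. As an $S_{3}$-representation $V$ is the standard two-dimensional irreducible $\rho_{2,1}$, so it contains no copy of $K_{\triv}$ and no copy of $K_{\sgn}$; hence $A^{S_{3}}\cap V=0$ and $A^{S_{3}}_{\sgn}\cap V=0$. Consequently the invariant entries $a,a_{00},a_{33}$ and the anti-invariant entries $b,a_{03},a_{30}$ in Theorem~\ref{Thm:LuzstigAlgS3} all vanish in this degree. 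For the two remaining families I would reuse the linear-algebra computation done in the paragraph preceding the statement: an element $u=\alpha x+\beta y\in V$ with $(s_{1}-\omega^{2}s_{2})u=0$ must have $\alpha=0$, so $u=\beta y$ and $s_{1}u=\beta x$; similarly $v=\gamma x+\delta y\in V$ with $(s_{1}-\omega s_{2})v=0$ forces $v=\gamma x$ and $s_{1}v=\gamma y$. Substituting $a_{10}=ay$, $a_{13}=dy$, $a_{01}=bx$, $a_{31}=cx$, $c=ex$ (renaming the five surviving scalars as $a,b,c,d,e$) into the matrix of Theorem~\ref{Thm:LuzstigAlgS3}, and using these formulas for the $s_{1}$-images of the off-diagonal entries, produces exactly the matrix $M$ of \eqref{Eq:MS3}.

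For the second assertion I would invoke the standing hypothesis from Section~\ref{Sub:QuiverRelationsKoszul} that $A=T_{K}(V)/I$: by Theorem~\ref{Thm:LusztigAlgKoszul} the algebra $\widetilde A(\sfG,T)$ is identified with a quotient $KQ/\langle I\rangle$ of a path algebra, and taking $A=T_{K}(V)$ itself gives $\widetilde{T_{K}(V)}(\sfG,T)\cong KQ$, which is generated over its degree-zero part by its degree-one part. Hence every element of the degree-two component of $(\End_{K}(T)\otimes A)^{\sfG}$ is a $K$-linear combination of products $MM'$ of two degree-one invariant matrices, each of the shape \eqref{Eq:MS3}. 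It then remains to compute the $4\times 4$ product $MM'$, reducing monomials in $x,y$ only as far as the degree-two part of $T_{K}(V)$ allows (keeping $xy$ and $yx$ distinct); this is precisely the displayed formula.

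There is no genuine obstacle here: both halves are essentially bookkeeping. The only point requiring a word of justification rather than pure computation is that the degree-two invariants are spanned by products of degree-one invariants, which is exactly the content of the Morita identification $T_{K}(V)*\sfG\sim KQ$ recalled before Theorem~\ref{Thm:LusztigAlgKoszul}; once that is granted, the entrywise verification of $MM'$ is routine.
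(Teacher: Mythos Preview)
Your proposal is correct and matches the paper's (largely implicit) argument: the paper carries out the computation of the conditions $(s_{1}-\omega^{2}s_{2})u=0$ and $(s_{1}-\omega s_{2})v=0$ on $V$ immediately before stating the corollary, and the vanishing of the invariant and anti-invariant scalars follows, as you say, from $V\cong\rho_{2,1}$ having no trivial or sign summand. Your explicit justification that the degree-two invariants are spanned by products of degree-one invariants (via the identification $\widetilde{T_{K}(V)}(\sfG,T)\cong KQ$) is a point the paper leaves tacit, so your write-up is in fact slightly more complete than the original.
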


 Note that the $M$ from \eqref{Eq:MS3} provides us with the McKay quiver of $S_3$: 
 \[
\begin{tikzpicture}
\node at (4,0) {\begin{tikzpicture} 
\node (C1) at (0,0)  {$\rho_{\mathrm{triv}}$};
\node (C2) at (2,0)  {$\rho_{2,1}$};
\node (C3) at (4,0)  {$\rho_{\mathrm{\sgn}}$};

\draw [->,bend left=20,looseness=1,pos=0.5] (C1) to node[above]  {A} (C2);
\draw [->,bend left=20,looseness=1,pos=0.5] (C2) to node[below] {B} (C1);
\draw[->]  (C2) to [out=120,in=60, loop,looseness=10,pos=0.5] node[above] {E} (C2);

\draw [->,bend left=20,looseness=1,pos=0.5] (C3) to node[below]  {D} (C2);
\draw [->,bend left=20,looseness=1,pos=0.5] (C2) to node[above] {C} (C3);

\end{tikzpicture}}; 
\end{tikzpicture}
\] 
We label the arrows by the corresponding scalars in \eqref{Eq:MS3}, so e.g., the arrow $E$ corresponds to $$\begin{pmatrix} 0 & 0 & 0 & 0\\ 0 & 0 &ex& 0 \\ 0 & ey & 0 & 0 \\ 0 & 0 & 0 & 0 \end{pmatrix}.$$

Now consider $V \otimes_K V = \Sym_K(V) \oplus K_{\sgn}$ as $S_3$-representations (here: $\Sym_K(V)=\rho_{2,1}$). We see that we get generators $\langle x^2, y^2, xy+yx \rangle$ for $\Sym_K(V)$ and $\langle xy-yx \rangle$ for $K_{\sgn}$. The $S_3$-invariant subspaces of $V \otimes V$ are then given by sums of the ideals 
$$I_1=\langle xy+yx \rangle, \ I_2=\langle x^2,y^2 \rangle,  \ I_3=\langle xy-yx \rangle \ .$$
Thus we can calculate the relations for $\widetilde{A}(\sfG)$, where $A=T_K(V)/I$, where $I=\sum_{i=1}^3\varepsilon_{i}I_i$ and $\varepsilon \in \{ 0 , 1\}$. This gives $8$ possibilities for $I$: 
\begin{enumerate}[leftmargin=*]
\item $I=\langle 0 \rangle$: Then $A=T_K(V)$ and there are no relations in $A*\sfG$. This means that $A*\sfG$ is Morita equivalent to the basic algebra $KQ$, where $Q$ is the McKay quiver of $S_3$. 
\item $I=I_1$: Then $A_1=T_K(V)/I_1$ and we immediately see the relations $BA=CD=0$. The middle representation yields the relation $AB+DC+2E^2=0$.
\item $I=I_2$: Then $A_2=T_K(V)/I_2$ and we see that then immediately $BE=ED=CE=EA=0$ and from the middle that $AB=DC$. Note that $A_2$ is a quadratic monomial algebra and is Koszul by \cite{Froeberg}, and the same holds for $\widetilde{A_2}(\sfG)$ by Thm.~\ref{Thm:LusztigAlgKoszul}.
\item $I=I_3$: Then $A_3=\Sym_K(V)$ and $A_3*\sfG$ is the usual skew group ring. From the commutativity relations, we immediately see that $BD=CA=0$. The middle part of $MM'$ yields the relation $AB+DC-2E^2=0$. Note that the algebra $A$ in \cite[Prop.~4.11 (i)]{DaFI} is $\widetilde{A_3}(\sfG)/\widetilde{A_3}(\sfG)e_{\rho_{\mathrm{\sgn}}}\widetilde{A_3}(\sfG)$, and we get the relation $AB=2E^2$ by deleting all paths going through the vertex $\rho_{\mathrm{\sgn}}$. This is the relation calculated in loc.cit. by other means.
\item $I=I_1+I_2$: Then $A_4=\bigwedge_K(V)$ is the exterior algebra. The immediate relations are $BA=BE=EA=ED=CE=CD=0$, and from the middle part we obtain the two relations $AB+E^2=0$ and $DC+E^2=0$. 
\item $I=I_1+I_3$: Here note that $I=\langle xy,yx \rangle$, and thus $A_5$ is a quadratic monomial algebra and Koszul. Note that $A_5$ is the Koszul dual of $A_2$. The relations in $\widetilde{A_5}(\sfG)$ are $BA=BD=CA=CD=0$ and from the middle $2 \times 2$-block $AB+DC=0$ and $E^2=0$. Moreover $\widetilde{A_5}(\sfG)$ is Koszul dual to $\widetilde{A_2}(\sfG)$.
\item $I=I_2+I_3$: $A_6=T_K(V)/I$ is Koszul dual to $A_1$. For the Lusztig algebra $\widetilde{A_6}(\sfG)$ we obtain relations $BE=EA=ED=CE=CA=BD=0$ and $AB-E^2=0$ and $DC-E^2=0$.
\item $I=I_1+I_2+I_3$: Here $I=\langle x^2,y^2,xy,yx \rangle$ is quadratic and $A_7=T_K(V)/I$ is finite dimensional with Koszul dual $A=T_K(V)$. The Lusztig algebra $\widetilde{A_7}(\sfG)$ is then $KQ$ modulo all paths of length $2$, which is also finite dimensional. \end{enumerate}
 \end{example}

\begin{example} \label{Ex:S4}
Let $\sfG=S_4$ be acting on $V=Kx\oplus Ky \oplus Kz$ as a reflection group, i.e., consider $G(1,1,4)$. Using a computer algebra system (e.g.~Macaulay2) one can calculate the degree one part of of $A*\sfG$, yielding a $10\times10$ matrix:
\begin{equation*}
  \scalebox{0.45}{\rotatebox{0}{
$\begin{pmatrix}
       0&\left(2\,x+y+z\right)a^*&\left(x+2\,y+z\right)a^*&\left(x+y+2\,z\right)a^*&0&0&0&0&0&0\\
       xa&\left(2\,x-y-z\right)f&\left(-x-2\,y-z\right)f&\left(-x-y-2\,z\right)f&\left(-x-2\,y-z\right)b^{*}&\left(-y+z\right)b^{*}&\left(-x-3\,y\right)g^*&\left(-y+z\right)g^*&\left(-x-3\,z\right)g^*&0\\
       ya&\left(-2\,x-y-z\right)f&\left(-x+2\,y-z\right)f&\left(-x-y-2\,z\right)f&\left(-x+z\right)b^{*}&\left(-2\,x-y-z\right)b^{*}&\left(3\,x+y\right)g^*&\left(-y-3\,z\right)g^*&\left(-x+z\right)g^*&0\\
       za&\left(-2\,x-y-z\right)f&\left(-x-2\,y-z\right)f&\left(-x-y+2\,z\right)f&\left(x+2\,y+z\right)b^{*}&\left(2\,x+y+z\right)b^{*}&\left(-x+y\right)g^*&\left(3\,y+z\right)g^*&\left(3\,x+z\right)g^*&0\\
       0&\left(-2\,x-y-z\right)b&\left(-x+y+2\,z\right)b&\left(-x+2\,y+z\right)b&0&0&\left(x+z\right)c^{*}&\left(-y+z\right)c^{*}&\left(x+y\right)c^{*}&0\\
       0&\left(x-y+2\,z\right)b&\left(-x-2\,y-z\right)b&\left(2\,x-y+z\right)b&0&0&\left(-y-z\right)c^{*}&\left(x+y\right)c^{*}&\left(-x+z\right)c^{*}&0\\
       0&\left(3\,y+z\right)g&\left(-3\,x-z\right)g&\left(-x+y\right)g&\left(3\,x+z\right)c&\left(-3\,y-z\right)c&\left(x+y-2\,z\right)e&\left(2\,x+y+z\right)e&\left(-x-2\,y-z\right)e&zd^{*}\\
       0&\left(-y+z\right)g&\left(x+3\,z\right)g&\left(-x-3\,y\right)g&\left(x+3\,z\right)c&\left(2\,x+3\,y+3\,z\right)c&\left(x+y+2\,z\right)e&\left(-2\,x+y+z\right)e&\left(-x-2\,y-z\right)e&xd^{*}\\
       0&\left(y+3\,z\right)g&\left(-x+z\right)g&\left(-3\,x-y\right)g&\left(3\,x+2\,y+3\,z\right)c&\left(y+3\,z\right)c&\left(-x-y-2\,z\right)e&\left(-2\,x-y-z\right)e&\left(x-2\,y+z\right)e&{-yd^{*}}\\
       0&0&0&0&0&0&\left(-x-y-2\,z\right)d&\left(-2\,x-y-z\right)d&\left(x+2\,y+z\right)d&0\end{pmatrix}$
  }}
  \end{equation*}
This yields the McKay quiver (which is luckily the same as we calculated earlier in \eqref{Eq:S4-McKayQuiver}), now with labelled arrows:
\begin{equation} 
{\begin{tikzpicture}[baseline=(current  bounding  box.center)] 
\ytableausetup{centertableaux,boxsize=0.3em}
\node (triv) at (0,0) {$\ydiagram{4}$};

\node (R1) at (3,0) {$\ydiagram{3,1}$} ;
\node (R2) at (6,0)  {$\ydiagram{2,1,1}$};
\node (det) at (9,0)  {$\ydiagram{1,1,1,1}$};
\node (W) at (4.5,-2.1)  {$\ydiagram{2,2}$};

\draw [->,bend left=15,looseness=0.5,pos=0.5] (triv) to node[]  [above]{$A$} (R1);
\draw [->,bend left=15,looseness=0.5,pos=0.5] (R1) to node[]  [below]{$A^*$} (triv);

\draw [->,bend left=10,looseness=0.5,pos=0.5] (R1) to node[] [above] {$G$} (R2);
\draw [->,bend left=10,looseness=0.5,pos=0.5] (R2) to node[] [below] {$G^*$} (R1);

\draw [->,bend left=15,looseness=0.5,pos=0.5] (R2) to node[] [above] {$D$} (det);
\draw [->,bend left=15,looseness=0.5,pos=0.5] (det) to node[] [below] {$D^*$} (R2);

\draw [->,bend left=10,looseness=0.5,pos=0.5] (R1) to node[] [above] {$B$} (W);
\draw [->,bend left=10,looseness=0.5,pos=0.5] (W) to node[] [below] {$B^*$} (R1);
\draw [->,bend left=10,looseness=0.5,pos=0.5] (R2) to node[] [below] {$C^*$} (W);
\draw [->,bend left=10,looseness=0.5,pos=0.5] (W) to node[] [above] {$C$} (R2);

\draw [->,in=120, out=60,loop,looseness=5,pos=0.5] (R1) to node[] [above] {$F$} (R1);
\draw [->,in=120, out=60,loop,looseness=4,pos=0.5] (R2) to node[] [above] {$E$} (R2);

\end{tikzpicture}} 
\end{equation}

 Here we just give the relations for the Lusztig algebra $\widetilde{A}(\sfG)$ associated to the skew group ring $S*\sfG$, that is, we set $A=\Sym_K(V)$. A computation with Macaulay2 shows that in total there are $12$ quadratic relations, with at most $4$ terms:
\begin{align*}
\text{1 term:} \quad& GA=A^*G^*=G^*D^*=DG=0 \ , \\
\text{2 terms:} \quad& BG^*+3C^*E=-GB^*+3EC^*=2FB^*+G^*C=-2BF+C^*G=0 \ , \\
\text{3 terms:} \quad & 2FG^*+4B^*C^*-G^*E=-2GF+4CB+EG=0 \ , \\
\text{4 terms:} \quad & 3GG^*+12CC^*+3E^2+16D^*D=-32AA^*+12F^2+4B^*B+3G^*G=0 \ . \\
\end{align*}
\end{example}

\begin{Bem}
The last three examples suggest that in general the Lusztig algebra $\widetilde{S}(\sfG)$ does not have the structure of a preprojective algebra. There are also examples of $\widetilde{S}(\sfG)$ for $\sfG$ the product of cyclic groups that are not isomorphic to a (higher) preprojective algebra, see \cite{AmiotIyamaReiten,Thibault}. It would be interesting to find a pattern for the relations for $\sfG=S_n$ and other reflection groups. 
\end{Bem}

%\bibliographystyle{plain}
%\bibliography{mybibliography}

\begin{thebibliography}{10}

\bibitem{AmiotIyamaReiten}
Claire Amiot, Osamu Iyama, and Idun Reiten.
\newblock Stable categories of {C}ohen-{M}acaulay modules and cluster
  categories.
\newblock {\em Amer. J. Math.}, 137(3):813--857, 2015.

\bibitem{ariki1994hecke}
Susumu Ariki and Kazuhiko Koike.
\newblock A {H}ecke algebra of {$({\bf Z}/r{\bf Z})\wr{S}_n$} and construction
  of its irreducible representations.
\newblock {\em Adv. Math.}, 106(2):216--243, 1994.

\bibitem{Auslander86}
Maurice Auslander.
\newblock Rational singularities and almost split sequences.
\newblock {\em Trans. Amer. Math. Soc.}, 293(2):511--531, 1986.

\bibitem{AuslanderReiten}
Maurice Auslander and Idun Reiten.
\newblock Mc{K}ay quivers and extended {D}ynkin diagrams.
\newblock {\em Trans. Amer. Math. Soc.}, 293(1):293--301, 1986.

\bibitem{bagno2007colored}
Eli Bagno and Riccardo Biagioli.
\newblock Colored-descent representations of complex reflection groups
  {$G(r,p,n)$}.
\newblock {\em Israel J. Math.}, 160:317--347, 2007.

\bibitem{Beck}
Vincent Beck.
\newblock Exterior algebra structure on relative invariants of reflection
  groups.
\newblock {\em Math. Z.}, 267(1-2):261--289, 2011.

\bibitem{BeilinsonGinzburgSoergel}
Alexander Beilinson, Victor Ginzburg, and Wolfgang Soergel.
\newblock Koszul duality patterns in representation theory.
\newblock {\em J. Amer. Math. Soc.}, 9(2):473--527, 1996.

\bibitem{BocklandtSchedlerWemyss}
Raf Bocklandt, Travis Schedler, and Michael Wemyss.
\newblock Superpotentials and higher order derivations.
\newblock {\em J. Pure Appl. Algebra}, 214(9):1501--1522, 2010.

\bibitem{Brauer}
Richard Brauer.
\newblock A note on theorems of {B}urnside and {B}lichfeldt.
\newblock {\em Proc. Amer. Math. Soc.}, 15:31--34, 1964.

\bibitem{BritnellWildon}
John~R. Britnell and Mark Wildon.
\newblock Bell numbers, partition moves and the eigenvalues of the
  random-to-top shuffle in {D}ynkin types {A}, {B} and {D}.
\newblock {\em J. Combin. Theory Ser. A}, 148:116--144, 2017.

\bibitem{Bro}
A.~Broer.
\newblock On {C}hevalley-{S}hephard-{T}odd's theorem in positive
  characteristic.
\newblock In {\em Symmetry and spaces}, volume 278 of {\em Progr. Math.}, pages
  21--34. Birkh\"auser Boston, Inc., Boston, MA, 2010.

\bibitem{Brouebook}
Michel Brou\'{e}.
\newblock {\em Introduction to complex reflection groups and their braid
  groups}, volume 1988 of {\em Lecture Notes in Mathematics}.
\newblock Springer-Verlag, Berlin, 2010.

\bibitem{BuchweitzMFO}
R.-O. Buchweitz.
\newblock {From platonic solids to preprojective algebras via the McKay
  correspondence}.
\newblock {\em Oberwolfach Jahresbericht}, pages 18--28, 2012.

\bibitem{RagnarKLtalk}
R.-O. Buchweitz.
\newblock {A McKay correspondence for reflection groups}.
\newblock 2015.
\newblock Talk at \emph{Differential and combinatorial aspects of
  singularities} meeting, Universit{\"a}t Kaiserslautern.

\bibitem{BFI-ICRA}
Ragnar-Olaf Buchweitz, Eleonore Faber, and Colin Ingalls.
\newblock Noncommutative resolutions of discriminants.
\newblock In {\em Representations of algebras}, volume 705 of {\em Contemp.
  Math.}, pages 37--52. Amer. Math. Soc., Providence, RI, 2018.

\bibitem{BFI}
Ragnar-Olaf Buchweitz, Eleonore Faber, and Colin Ingalls.
\newblock A {M}c{K}ay correspondence for reflection groups.
\newblock {\em Duke Math. J.}, 169(4):599--669, 2020.

\bibitem{arjeh1976finite}
Arjeh~M. Cohen.
\newblock Finite complex reflection groups.
\newblock {\em Ann. Sci. \'{E}cole Norm. Sup. (4)}, 9(3):379--436, 1976.

\bibitem{Crawetc}
Alastair Craw, Diane Maclagan, and Rekha~R. Thomas.
\newblock Moduli of {M}c{K}ay quiver representations. {II}. {G}r\"{o}bner basis
  techniques.
\newblock {\em J. Algebra}, 316(2):514--535, 2007.

\bibitem{CB1999}
William Crawley-Boevey.
\newblock Preprojective algebras, differential operators and a {C}onze
  embedding for deformations of {K}leinian singularities.
\newblock {\em Comment. Math. Helv.}, 74(4):548--574, 1999.

\bibitem{DaFI}
Hailong Dao, Eleonore Faber, and Colin Ingalls.
\newblock Noncommutative (crepant) desingularizations and the global spectrum
  of commutative rings.
\newblock {\em Algebr. Represent. Theory}, 18(3):633--664, 2015.

\bibitem{FordMcKay}
David Ford and John McKay.
\newblock Representations and {C}oxeter graphs.
\newblock In {\em The geometric vein}, pages 549--554. Springer, New
  York-Berlin, 1981.

\bibitem{Froeberg}
R.~Fr{\"o}berg.
\newblock Koszul algebras.
\newblock In {\em Advances in commutative ring theory ({F}ez, 1997)}, volume
  205 of {\em Lecture Notes in Pure and Appl. Math.}, pages 337--350. Dekker,
  New York, 1999.

\bibitem{fulton1997young}
William Fulton.
\newblock {\em Young tableaux: with applications to representation theory and
  geometry}, volume~35.
\newblock Cambridge University Press, 1997.

\bibitem{FultonHarris}
William Fulton and Joe Harris.
\newblock {\em Representation theory}, volume 129 of {\em Graduate Texts in
  Mathematics}.
\newblock Springer-Verlag, New York, 1991.
\newblock A first course, Readings in Mathematics.

\bibitem{GonzalesSprinbergVerdier}
G.~Gonzalez-Sprinberg and J.-L. Verdier.
\newblock Construction g\'{e}om\'{e}trique de la correspondance de {M}c{K}ay.
\newblock {\em Ann. Sci. \'{E}cole Norm. Sup. (4)}, 16(3):409--449 (1984),
  1983.

\bibitem{GoupilChauve}
Alain Goupil and Cedric Chauve.
\newblock Combinatorial operators for {K}ronecker powers of representations of
  {$\mathcal{S}_n$}.
\newblock {\em S{\'e}m. Lothar. Combin.}, 54:Art. B54j, 13, 2005/07.

\bibitem{GuoMartinez}
Jin~Yun Guo and Roberto Mart\'{\i}nez-Villa.
\newblock Algebra pairs associated to {M}c{K}ay quivers.
\newblock {\em Comm. Algebra}, 30(2):1017--1032, 2002.

\bibitem{HuJingCai}
Xiao~Li Hu, Naihuan Jing, and Wu~Xing Cai.
\newblock Generalized {M}c{K}ay quivers of rank three.
\newblock {\em Acta Math. Sin. (Engl. Ser.)}, 29(7):1351--1368, 2013.

\bibitem{HuerfanoKhovanov}
Ruth~Stella Huerfano and Mikhail Khovanov.
\newblock A category for the adjoint representation.
\newblock {\em J. Algebra}, 246(2):514--542, 2001.

\bibitem{IyamaHigher}
Osamu Iyama.
\newblock Cluster tilting for higher {A}uslander algebras.
\newblock {\em Adv. Math.}, 226(1):1--61, 2011.

\bibitem{JamesLiebeck}
Gordon James and Martin~W Liebeck.
\newblock {\em Representations and characters of groups}.
\newblock Cambridge University Press, 2001.

\bibitem{Karmazyn}
J.~Karmazyn.
\newblock Superpotentials, {C}alabi-{Y}au algebras, and {PBW} deformations.
\newblock {\em J. Algebra}, 413:100--134, 2014.

\bibitem{KMa}
G.~Kemper and G.~Malle.
\newblock The finite irreducible linear groups with polynomial ring of
  invariants.
\newblock {\em Transform. Groups}, 2(1):57--89, 1997.

\bibitem{LehrerTaylor}
Gustav~I. Lehrer and Donald~E. Taylor.
\newblock {\em Unitary reflection groups}, volume~20 of {\em Australian
  Mathematical Society Lecture Series}.
\newblock Cambridge University Press, Cambridge, 2009.

\bibitem{LeuschkeWiegand}
Graham~J. Leuschke and Roger Wiegand.
\newblock {\em Cohen-{M}acaulay representations}, volume 181 of {\em
  Mathematical Surveys and Monographs}.
\newblock American Mathematical Society, Providence, RI, 2012.

\bibitem{LewisThesis}
M.~Lewis.
\newblock Mckay quivers of the groups {$G(r,p,n)$}.
\newblock Phd thesis, University of New Brunswick, 2018.

\bibitem{LusztigAdvances}
G.~Lusztig.
\newblock On quiver varieties.
\newblock {\em Adv. Math.}, 136(1):141--182, 1998.

\bibitem{McConnellRobson}
J.~C. McConnell and J.~C. Robson.
\newblock {\em Noncommutative {N}oetherian rings}, volume~30 of {\em Graduate
  Studies in Mathematics}.
\newblock American Mathematical Society, Providence, RI, revised edition, 2001.
\newblock With the cooperation of L. W. Small.

\bibitem{mckay37graphs}
John McKay.
\newblock Graphs, singularities, and finite groups.
\newblock In {\em The {S}anta {C}ruz {C}onference on {F}inite {G}roups ({U}niv.
  {C}alifornia, {S}anta {C}ruz, {C}alif., 1979)}, volume~37 of {\em Proc.
  Sympos. Pure Math.}, pages 183--186. Amer. Math. Soc., Providence, R.I.,
  1980.

\bibitem{Mit1}
Howard~H. Mitchell.
\newblock Determination of the ordinary and modular ternary linear groups.
\newblock {\em Trans. Amer. Math. Soc.}, 12(2):207--242, 1911.

\bibitem{Mit2}
Howard~H. Mitchell.
\newblock Determination of the finite quaternary linear groups.
\newblock {\em Trans. Amer. Math. Soc.}, 14(1):123--142, 1913.

\bibitem{Mit3}
Howard~H. Mitchell.
\newblock Determination of {A}ll {P}rimitive {C}ollineation {G}roups in {M}ore
  than {F}our {V}ariables which {C}ontain {H}omologies.
\newblock {\em Amer. J. Math.}, 36(1):1--12, 1914.

\bibitem{MoritaYamada}
H.~Morita and H.-F. Yamada.
\newblock Higher {S}pecht polynomials for the complex reflection group
  {$G(r,p,n)$}.
\newblock {\em Hokkaido Math. J.}, 27(3):505--515, 1998.

\bibitem{Opdam}
E.~Opdam.
\newblock Complex reflection groups and fake degrees.
\newblock 1998.
\newblock \url{https://arxiv.org/abs/math/9808026}.

\bibitem{ReitenVdB}
Idun Reiten and Michel Van~den Bergh.
\newblock Two-dimensional tame and maximal orders of finite representation
  type.
\newblock {\em Mem. Amer. Math. Soc.}, 80(408):viii+72, 1989.

\bibitem{Ringel}
Claus~Michael Ringel.
\newblock The preprojective algebra of a quiver.
\newblock In {\em Algebras and modules, {II} ({G}eiranger, 1996)}, volume~24 of
  {\em CMS Conf. Proc.}, pages 467--480. Amer. Math. Soc., Providence, RI,
  1998.

\bibitem{Serre}
Jean-Pierre Serre.
\newblock {\em Linear representations of finite groups}.
\newblock Springer-Verlag, New York-Heidelberg, 1977.
\newblock Translated from the second French edition by Leonard L. Scott,
  Graduate Texts in Mathematics, Vol. 42.

\bibitem{ShT}
G.~C. Shephard and J.~A. Todd.
\newblock Finite unitary reflection groups.
\newblock {\em Canad. J. Math.}, 6:274--304, 1954.

\bibitem{stembridge1989eigenvalues}
John Stembridge.
\newblock On the eigenvalues of representations of reflection groups and wreath
  products.
\newblock {\em Pacific Journal of Mathematics}, 140(2):353--396, 1989.

\bibitem{Thibault}
Louis-Philippe Thibault.
\newblock Preprojective algebra structure on skew-group algebras.
\newblock {\em Adv. Math.}, 365:107033, 2020.

\bibitem{Wag1}
Ascher Wagner.
\newblock Determination of the finite primitive reflection groups over an
  arbitrary field of characteristic not {$2$}. {I}.
\newblock {\em Geom. Dedicata}, 9(2):239--253, 1980.

\bibitem{Wag2}
Ascher Wagner.
\newblock Determination of the finite primitive reflection groups over an
  arbitrary field of characteristic not two. {II}, {III}.
\newblock {\em Geom. Dedicata}, 10(1-4):191--203, 475--523, 1981.

\bibitem{Yoshino}
Yuji Yoshino.
\newblock {\em Cohen-{M}acaulay modules over {C}ohen-{M}acaulay rings}, volume
  146 of {\em London Mathematical Society Lecture Note Series}.
\newblock Cambridge University Press, Cambridge, 1990.

\bibitem{ZSe}
A.~E. Zalesski{\u\i} and V.~N. Sere{\v{z}}kin.
\newblock Finite linear groups generated by reflections.
\newblock {\em Izv. Akad. Nauk SSSR Ser. Mat.}, 44(6):1279--1307, 38, 1980.

\end{thebibliography}

\end{document}